\let\frak\mathfrak
\def\>{\relax\ifmmode\mskip.666667\thinmuskip\relax\else\kern.111111em\fi}
\def\<{\relax\ifmmode\mskip-.333333\thinmuskip\relax\else\kern-.0555556em\fi}
\def\vsk#1>{\vskip#1\baselineskip}
\def\vv#1>{\vadjust{\vsk#1>}\ignorespaces}
\def\vvn#1>{\vadjust{\nobreak\vsk#1>\nobreak}\ignorespaces}
  \let\ssize\scriptstyle
\let\sssize\scriptscriptstyle
\let\Medskip\medskip
\def\medskip{\par\Medskip}
\let\Bigskip\bigskip
\def\bigskip{\par\Bigskip}
\let\Maketitle\maketitle
\def\maketitle{\Maketitle\thispagestyle{empty}\let\maketitle\empty}
\newtheorem{thm}{Theorem}[section]
\newtheorem{cor}[thm]{Corollary}
\newtheorem{lem}[thm]{Lemma}
\newtheorem{defn}[thm]{Definition}
\theoremstyle{definition}                                  
\numberwithin{equation}{section}
\theoremstyle{definition}
\newtheorem*{rem}{Remark}
\newtheorem*{example}{Example}
\let\mc\mathcal
\let\nc\newcommand
\let\dl\delta
\let\Dl\Delta
\let\ka\kappa
\let\la\lambda
\let\La\Lambda
\let\phi\varphi
\let\si\sigma
\let\Om\Omega
\let\der\partial
\let\ox\otimes
\let\ge\geqslant
\let\geq\geqslant
\let\le\leqslant
\let\leq\leqslant
\let\on\operatorname
\let\bi\bibitem
\let\bs\boldsymbol
\def\C{{\mathbb C}}
\def\Z{{\mathbb Z}}
\def\R{{\mathbb R}}
\def\F{{\mathbb F}}   
\def\+#1{^{\{#1\}}}
\def\beq{\begin{equation}}
\def\eeq{\end{equation}}
\def\be{\begin{equation*}}
\def\ee{\end{equation*}}
\nc{\bea}{\begin{eqnarray*}}
\nc{\eea}{\end{eqnarray*}}
\nc{\bean}{\begin{eqnarray}}
\nc{\eean}{\end{eqnarray}}
\def\g{{\mathfrak g}}
\nc{\Il}{{\mc I_{\bs\la}}}
\nc{\bla}{{\bs\la}}
\nc{\Fla}{\F_\bla}
\nc{\tfl}{{T^*\Fla}}
\nc{\GL}{{GL_n(\C)}}
\nc{\GLC}{{GL_n(\C)\times\C^*}}
\let\sd s 
\def\ddk_#1{\kk_{#1}\<\>\frac\der{\der\<\>\kk_{#1}}}
\def\bul{\mathbin{\raise.2ex\hbox{$\sssize\bullet$}}}
\def\intt{\mathchoice
{\mathop{\raise.2ex\rlap{$\,\,\ssize\backslash$}{\intop}}\nolimits}
{\mathop{\raise.3ex\rlap{$\,\sssize\backslash$}{\intop}}\nolimits}
{\mathop{\raise.1ex\rlap{$\sssize\>\backslash$}{\intop}}\nolimits}
{\mathop{\rlap{$\sssize\<\>\backslash$}{\intop}}\nolimits}}
\let\kk q 
\let\cc c
\let\Ko K
\def\GZ/{Gelfand-Zetlin}
\def\KZ/{{\slshape KZ\/}}
\def\qKZ/{{\slshape qKZ\/}}
\def\XXX/{{\slshape XXX\/}}
\nc{\A}{{\mc A}}
\def\sll{{\frak{sl}}}
\def\Q{{\mathbb Q}}
\nc{\hsl}{\widehat{{\frak{sl}_2}}}
\nc{\BC}{{ \mathbb C}}
\nc{\lra}{\longrightarrow}
\nc{\CO}{{\mathcal{O}}}
\nc{\BZ}{{ \mathbb Z}}
\nc{\hfn}{\hat{\frak{n}}}
\nc\Zs{{\Z/p^s\Z}}
\nc\Zo{{\Zs[z]^0}}
\nc\gr{{\on{gr}}}
\nc\fD{{\frak D}}
\newcommand{\Cf}{\operatorname{Coeff}}
\begin{document}

\hrule width0pt
\vsk->

\title[Congruences for solutions of $p$-adic  KZ equations]
{Congruences for Hasse--Witt matrices 
\\ and solutions of $p$-adic  KZ equations}

\author[A.\:Varchenko and W.\,Zudilin]
{Alexander Varchenko$^{\star}$ and Wadim Zudilin$^{\diamond}$}

\maketitle

\begin{center}
{\it $^{\star}$ Department of Mathematics, University
of North Carolina at Chapel Hill\\ Chapel Hill, NC 27599-3250, USA\/}

\vsk.5>
{\it $^{ \star}$ Faculty of Mathematics and Mechanics, Lomonosov Moscow State
University\\ Leninskiye Gory 1, 119991 Moscow GSP-1, Russia\/}

\vsk.5>
 {\it $^{ \star}$ Moscow Center of Fundamental and Applied Mathematics
\\ Leninskiye Gory 1, 119991 Moscow GSP-1, Russia\/}

\vsk.5>
 {\it $^{ \diamond}$ 
Institute for Mathematics, Astrophysics and Particle Physics, Radboud University,
\\
 PO Box 9010, 6500 GL, Nijmegen, The Netherlands\/}

\end{center}

\vsk>
{\leftskip3pc \rightskip\leftskip \parindent0pt \Small
{\it Key words\/}:  KZ equations; Dwork congruences; master polynomials; Hasse--Witt matrices.

\vsk.6>
{\it 2020 Mathematics Subject Classification\/}: 11D79 (12H25, 32G34, 33C05, 33E30)
\par}

{\let\thefootnote\relax
\footnotetext{\vsk-.8>\noindent
$^\star\<${\sl E\>-mail}:\enspace anv@email.unc.edu,
supported in part by NSF grant DMS-1954266
\\
$^\diamond\<${\sl E\>-mail}:\enspace  w.zudilin@math.ru.nl
}}

\begin{abstract}

We prove general Dwork-type congruences for Hasse--Witt matrices
attached to tuples of Laurent polynomials.
We apply this result to establishing arithmetic and $p$-adic analytic properties of functions
 originating from polynomial solutions modulo $p^s$ of Knizhnik--Zamolodchikov (KZ) equations, 
 solutions which come as coefficients of master polynomials and whose coefficients are integers. 
As an application we show that the  $p$-adic KZ connection associated with the family of hyperelliptic curves
$y^2=(t-z_1)\dots (t-z_{2g+1})$ has an invariant  subbundle of rank $g$. Notice that 
the corresponding complex KZ connection
 has no  nontrivial subbundles due to the irreducibility  of its monodromy representation.

\end{abstract}

{\small\tableofcontents\par}

\setcounter{footnote}{0}
\renewcommand{\thefootnote}{\arabic{footnote}}

\section{Introduction}

It is classical that the periods of the Legendre family $y^2=t(t-1)(t-x)$ of elliptic curves viewed as functions of $x$ satisfy the hypergeometric differential equation
\bean
\label{HE}
x(1-x) I'' +(1-2x)I'-\frac14I=0;
\eean
the hypergeometric series
\bean
\label{2F1}
\phantom{aaaa}
F(x):={}_2F_1\Big(\frac12,\frac 12; 1; x\Big) = \frac 1\pi\,\int_1^\infty t^{-1/2}(t-1)^{-1/2}(t-x)^{-1/2} dt
=\sum_{k = 0}^\infty\binom{-1/2}{k}^2x^k
\eean
represents the analytic solution of \eqref{HE} at the origin.
In order to investigate the local zeta function of the $x$-fiber in the family, Dwork \cite{Dw} studied
 the differential equation \eqref{HE} $p$-adically using  truncations of the infinite sum in \eqref{2F1},
\bea
F_s(x)=\sum_{k = 0}^{p^s-1}\binom{-1/2}{k}^2x^k \qquad\text{for}\quad s=1,2,\dots,
\eea
as $p$-adic approximations to its  analytic solution \eqref{2F1}.
Clearly, the series $(F_s(x))$ converges to $F(x)$ as $s\to\infty$ in the disk $D_{0,1}=\{x\mid |x|_p<1\}$.
Dwork showed that the uniform limit $F_{s+1}(x)/F_s(x^p)$ as $s\to\infty$ exists in a larger domain 
$\frak D_{\text{Dw}}$, thus giving the $p$-adic analytic continuation of the function
$F(x)/F(x^p)$, originally defined in $D_{0,1}$, to that larger domain.
This limit, called the ``unit root'', defines a root of the local zeta function.

The second part of Dwork's investigation \cite{Dw} concerned with
 the $p$-adic analytic continuation of the function $F(x)$ as a solution
of \eqref{HE}. Dwork considered differential equation \eqref{HE} as a system of 
 first order linear differential equations
for the vector $(F(x), F'(x))$ and approximated the direction vector 
$(1, F'(x)/F(x))$ by rational functions $(1, F'_s(x)/F_s(x))$. He showed that  
the uniform limit   as $s\to\infty$ of these rational functions does 
exist in the same larger domain 
$\frak D_{\text{Dw}}$, thus giving the $p$-adic analytic continuation to the domain 
$\frak D_{\text{Dw}}$ of the direction vector $(1, F'(x)/F(x))$
(but not of   the solution  $(F(x), F'(x))$).

This fact indicates  a clear difference of structure between solutions of complex analytic linear differential equations
and their $p$-adic versions.
A local solution of a complex linear differential equation can be analytically continued to a multi-valued
analytic function on the domain of the definition of the differential equation, while in the $p$-adic setting only
certain  subspaces of the space of all local solutions can be analytically continued as subspaces to larger
domains. In Dwork's situation only the one-dimensional subspace generated by  $F(x)$ in the two-dimensional
space of all local solutions at $x=0$ can be $p$-adic
 analytically continued to the larger domain $\frak D_{\text{Dw}}$. 
 
\vsk.2>

Dwork's work initiated significant research in $p$-adic differential equations and their applications to arithmetic of algebraic varieties. There is hardly a way to list all of them here, so we limit ourselves to 
mentioning some very recent contributions on the theme \cite{AS, BV}.  The principal direction of that research 
is generalization of the first part of Dwork's work\,---\,on the relation between the unit root $F(x)/F(x^p)$ and the
zeros of the  
local zeta function of the $x$-fiber in the Legendre family $y^2=t(t-1)(t-x)$.
In such a generalization the function $F(x)/F(x^p)$ becomes a square matrix with roots of its characteristic polynomial
related to zeros of the local zeta function of the fibers of the
corresponding family of algebraic varieties.

\vsk.2> 

Our present paper is related to the second part of Dwork's investigation on the $p$-adic analytic continuation of
the direction vector $(1, F'(x)/F(x))$. We study this phenomenon for a system of Knizhnik--Zamolodchikov (KZ)
differential equations. The KZ equations over $\C$ are objects 
of conformal field theory, representation theory, enumerative geometry, see for example \cite{KZ, EFK, MO}.
In \cite{SV1} the KZ equations over $\C$ were identified with the differential equations for flat sections of a suitable 
Gauss--Manin 
connection and solutions of the KZ differential equations were constructed in the form of multidimensional
hypergeometric integrals. In that sense the KZ differential equations are distant relatives of the hypergeometric
differential equation
\eqref{HE}. It is known that the KZ equations and their solutions have remarkable properties, see for example
\cite{EFK, V2}. This motivates the study of KZ differential equations and their solutions
over $p$-adic fields.

\vsk.2>

We consider the  differential KZ equations over $\C$  in the special case, when the hypergeometric
solutions are given by  hyperelliptic integrals of genus $g$. 
In this case the space of solutions of the differential KZ equations is a $2g$-dimensional complex vector space.
We also consider the $p$-adic version of the same differential
equations.  We show that the $2g$-dimensional space of local solutions of these $p$-adic 
differential KZ equations has a remarkable $g$-dimensional subspace of solutions which can be $p$-adic analytically
continued as a subspace to a large domain $\frak D_{\on{KZ}}^{(m),o}$
in the space where the KZ equations are defined,   see Theorems \ref{thm inv} and \ref{thm rk g} for precise statements.
This $g$-dimensional global subspace of solutions is defined constructively as the uniform $p$-adic limit of the
$g$-dimensional subspace of polynomial solutions of these KZ equations modulo $p^s$, the polynomial
solutions constructed in \cite{V5}.

\vsk.2> 

In \cite{SV2} general KZ differential equations were considered over the field $\F_p$ and their polynomial solutions
were constructed. 
In \cite{V5} this construction was modified and polynomial solutions modulo $p^s$ 
of the KZ equations associated with the hyperelliptic integrals were constructed. 
The polynomial solutions are vectors of polynomials with integer coefficients. They 
are  ``$p^s$-approximations'' of the corresponding hyperelliptic integral solutions of the same differential 
KZ equations over $\C$. In \cite{V5} the constructed polynomial solutions are called the  
$p^s$-hypergeometric solutions.  
While the complex hyperelliptic  integrals give the $2g$-dimensional space of all solutions of the 
complex KZ equations, the $p^s$-hypergeometric  solutions span only a $g$-dimensional subspace.
More general $p^s$-approximation 
constructions are discussed in \cite{SV2, RV1, RV2}.

\vsk.2>

In order to prove his two results stated above, Dwork developed in \cite{Dw} two types of congruences,
\bea
F_{s+1}(x)/F_s(x^p)
&\equiv&
  F_{s}(x)/F_{s-1}(x^p) \pmod{p^s}\,,
\\
F_{s+1}'(x)/F_{s+1}(x)
&\equiv&
  F_{s}'(x)/F_{s}(x) \quad\ \pmod{p^s}\,,
\eea
which are now called Dwork congruences.  A suitable matrix form of Dwork congruences is used in
most papers on $p$-adic differential equations. The closest versions of Dwork congruences
related to our needs  were developed in papers 
 \cite{Me, MV, Vl} by Mellit and Vlasenko. 
  Motivated by our KZ equation considerations  we give a 
   generalization of  Dwork congruences from \cite{Me, MV, Vl} in  Section \ref{sec 2}, see Theorems \ref{thm 1.6} and \ref{thm der}.
   The proofs of these theorems are  modifications of the proofs in \cite{Vl}. 
 
\vsk.2>
 In Sections \ref{sec 5} and \ref{sec 6} we apply  Dwork congruences from Section \ref{sec 2} to the matrices
 composed of coordinates of the   $p^s$-hypergeometric solutions and their antiderivatives.
 This application allows us to define the $g$-dimensional global subspace of solutions 
 on a large domain $\frak D_{\on{KZ}}^{(m),o}$.

 \vsk.2>
 
 Notice that the main tool in \cite{Dw} to prove the properties of the function
 $F(x)$ are polynomials $F_s(x)$ which are truncations of the power series expansion of the function $F(x)$. 
 In our KZ equation case we do not have
 distinguished solutions whose power series expansions  may be truncated and whose
 ratios could be  $p$-adic analytically continued. Instead, we have a collection of
$p^s$-hypergeometric solutions defined independently of any Dwork congruences, but 
which surprisingly satisfy appropriate Dwork congruences and give us  a global subspace of solutions
in the $s\to\infty$ limit.
 
\vsk.2>

This paper may be viewed as a continuation of our work \cite{VZ} where the case $g=1$ is developed.

\subsection*{Acknowledgements}
The authors thank Jeff Achter,  Vladimir Berkovich, Frits Beukers and Masha Vlasenko
 for useful discussions.

\section{On ghosts}
\label{sec 2}

In this paper $p$ is an odd prime.  We denote by $\Z_p[w^{\pm1}]$ the ring of Laurent polynomials in variables $w$
with coefficients in $\Z_p$.
A congruence $F(w)\equiv G(w)\pmod{p^s}$ for two Laurent polynomials from the ring is understood as the divisibility by $p^s$ of all coefficients of $F(w)-\nobreak G(w)$.

Throughout this section $x=(t,z)$, where $t=(t_1,\dots,t_r)$ and $z=(z_1,\dots,z_n)$ are two groups of variables.

\subsection{Definition of ghosts}

Let $\La=(\La_0(x),\La_1(x), \dots, \La_l(x))$ be a tuple of Laurent polynomials in $\Z_p[x^{\pm1}]$.
For every $0\leq j\leq s\leq l$, we define the Laurent polynomials
$$
W_s(x)=W_s(\La_0,\dots,\La_s)(x):=\La_0(x)\La_1(x)^{p}\dots \La_s(x)^{p^s}
$$
and
$$
W_s^{(j)}(x)=W_s^{(j)}(\La_0,\dots,\La_s)(x):=W_{s-j}(\La_j,\dots,\La_s)(x)=\La_j(x)\La_{j+1}(x)^{p}\dots \La_s(x)^{p^{s-j}}.
$$
Furthermore, we 
introduce the tuple $V_s=V_s(x)=V_s(\La_0,\dots,\La_s)(x)$, $s=0,1,\dots,l$, 
of Laurent polynomials in $\Z_p[x^{\pm1}]$
by the recursive formula
\begin{equation}
\label{dls}
V_s(x)
=W_s(x)-\sum_{j=1}^sV_{j-1}(x)W_s^{(j)}(x^{p^j}).
\end{equation}

The Laurent polynomials  $V_s(x)$ are called {\it ghosts} associated with the tuple $\La$.
They are useful for studying the congruences related to the tuple,
but they do not enter the final results.
The ghosts $V_s(x)$ are essentially offered in Vlasenko's work \cite{Vl}, though stated there for a very particular situation.
The ghosts $V_s(x)$ 
are quite different from the ghosts we use in our previous work \cite{VZ}\,---\,those are rooted in Mellit's preprint \cite{Me}.

\begin{lem}
\label{lem dl}

For $s=0,1,\dots,l$, we have $V_s(x) \equiv 0 \pmod{p^s}$.
\end{lem}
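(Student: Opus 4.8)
The plan is to argue by induction on $s$, with the arithmetic engine being the elementary ``Frobenius'' congruence for Laurent polynomials over $\Z_p$: if $f\equiv g\pmod{p^a}$ with $a\ge1$ then $f^p\equiv g^p\pmod{p^{a+1}}$, together with the base congruence $f(x)^p\equiv f(x^p)\pmod p$. Iterating these gives, for each $\La_i$ and each $k\ge1$,
\beq
\La_i(x)^{p^k}\equiv\La_i(x^p)^{p^{k-1}}\pmod{p^k},
\eeq
and, telescoping the substitution, $\La_i(x)^{p^k}\equiv\La_i(x^{p^m})^{p^{k-m}}\pmod{p^{k-m+1}}$ for $0\le m\le k$. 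These are the only arithmetic inputs. The combinatorial backbone is the self-similar factorization, immediate from the definitions,
\beq
W_s^{(j)}(x)=\La_j(x)\,W_s^{(j+1)}(x)^{p},\qquad\text{in particular}\quad W_s(x)=\La_0(x)\,W_s^{(1)}(x)^{p},
\eeq
which lets one pass between a tuple and its shift $(\La_1,\dots,\La_s)$.

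The cases $s=0$ (vacuous) and $s=1$ are immediate: from the base congruence one computes $V_1=\La_0\bigl(\La_1(x)^{p}-\La_1(x^p)\bigr)\equiv0\pmod p$. For the inductive step I assume $V_j\equiv0\pmod{p^j}$ for all $j<s$ and read off from \eqref{dls} that proving $V_s\equiv0\pmod{p^s}$ is the same as proving
$$
W_s(x)\equiv\sum_{j=1}^sV_{j-1}(x)\,W_s^{(j)}(x^{p^j})\pmod{p^s}.
$$
Because $V_{j-1}\equiv0\pmod{p^{j-1}}$, the factor $W_s^{(j)}(x^{p^j})$ in the $j$-th summand only matters modulo $p^{s-j+1}$, so it suffices to match $W_s(x)$ against this sum to that accuracy in each term.

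The idea is to transform the right-hand sum into $W_s(x)$ by telescoping. Using the factorization above, I would expose in each summand the ``low'' factor $\La_j(x^{p^j})$ and the ``high'' block $W_s^{(j+1)}(x^{p^j})^{p}$, and then repeatedly apply the algebraic identity
$$
a_1b_1-a_2b_2=a_1(b_1-b_2)+(a_1-a_2)b_2
$$
to replace each Frobenius-substituted factor $\La_i(x^{p^j})^{p^{i-j}}$ by the honest power $\La_i(x)^{p^i}$ one factor at a time. Each such replacement splits off either a difference controlled by the strong congruence displayed above, or a product in which a weakly divisible Frobenius difference meets an already-divisible lower ghost $V_{j-1}$. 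I have checked that for $s\le2$ the two resulting pieces are separately divisible by $p^s$; for instance $V_2=\La_0\bigl[\La_1(x)^{p}(D_1-D_2)+(\La_1(x)^{p}-\La_1(x^p))D_2\bigr]$ with $D_1=\La_2(x)^{p^2}-\La_2(x^{p^2})$ and $D_2=\La_2(x^p)^{p}-\La_2(x^{p^2})$, where $D_1-D_2\equiv0\pmod{p^2}$ and each of the remaining factors is divisible by $p$.

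The main obstacle is the bookkeeping of this telescoping for general $s$: the lowest-index factor $\La_j(x^{p^j})$ obeys only the weak congruence modulo $p$, so crude term-by-term estimates lose too much, and one must track precisely how this weak congruence combines with the divisibility $p^{j-1}$ of $V_{j-1}$ and with the progressively stronger congruences on the higher factors to recover the full modulus $p^s$ in every piece. I expect to organize this through a nested induction mirroring the factorization $W_s^{(j)}(x)=\La_j(x)\,W_s^{(j+1)}(x)^{p}$, reducing the claim for the tuple $(\La_0,\dots,\La_s)$ to the already-established claim for the shifted tuple $(\La_1,\dots,\La_s)$, following and adapting the argument of Vlasenko \cite{Vl}.
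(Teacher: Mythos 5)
Your setup is sound: the Frobenius congruences you isolate, the base cases $s\le1$, and the reduction of the inductive step to showing $W_s(x)\equiv\sum_{j=1}^sV_{j-1}(x)W_s^{(j)}(x^{p^j})\pmod{p^s}$ (with each summand only needing accuracy $p^{s-j+1}$ because $p^{j-1}\mid V_{j-1}$) all match what is actually needed. But the proof stops exactly where the real work begins: the inductive step is verified only for $s\le2$, and for general $s$ you offer a telescoping scheme whose feasibility you yourself flag as the ``main obstacle,'' to be organized by a nested induction that is never carried out. Worse, the organizing principle you propose --- peeling off the \emph{first} factor via $W_s(x)=\La_0(x)W_s^{(1)}(x)^p$ and reducing to the shifted tuple $(\La_1,\dots,\La_s)$ --- is structurally misaligned with the recursion \eqref{dls}: the ghosts $V_{j-1}$ appearing there are attached to the \emph{initial segments} $(\La_0,\dots,\La_{j-1})$ of the original tuple, whereas the claim for the shifted tuple concerns entirely different polynomials (the ghosts of $(\La_1,\dots,\La_j)$), and no identity relating the two families is supplied. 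So the proposed reduction does not close the induction.

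The clean inductive step goes the other way: strip off the \emph{last} element $\La_s$. Write $W_s(x)=W_{s-1}(x)\La_s(x)^{p^s}$ and $W_s^{(j)}(x^{p^j})=W_{s-1}^{(j)}(x^{p^j})\,\La_s(x^{p^j})^{p^{s-j}}$ for $j\le s-1$, while the $j=s$ term is $V_{s-1}(x)\La_s(x^{p^s})$; here $W_{s-1}$ and $W_{s-1}^{(j)}$ are built from $(\La_0,\dots,\La_{s-1})$, so the ghosts in the sum are untouched. Your own congruence $\La_s(x)^{p^s}\equiv\La_s(x^{p^j})^{p^{s-j}}\pmod{p^{s-j+1}}$, combined with $p^{j-1}\mid V_{j-1}$, lets you replace every twisted factor $\La_s(x^{p^j})^{p^{s-j}}$ (and $\La_s(x^{p^s})$) by $\La_s(x)^{p^s}$ at total cost $p^s$ in each term. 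After this uniformization the whole expression factors, modulo $p^s$, as
\bea
\Big(W_{s-1}(x)-\sum_{j=1}^{s-1}V_{j-1}(x)W_{s-1}^{(j)}(x^{p^j})-V_{s-1}(x)\Big)\La_s(x)^{p^s},
\eea
and the bracket is \emph{identically} zero, being precisely the defining relation \eqref{dls} of $V_{s-1}$. No term-by-term telescoping or bookkeeping is needed; this is the step your proposal is missing.
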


\begin{proof} For $s=0$ we have $V_0(x)=\La_0(x)$ and no requirements on divisibility.
For $s=1$, we have modulo $p$:
\bea
V_1(x) = \La_0(x)\La_1(x)^p -V_0(x)\La_1(x^p)
= \La_0(x)(\La_1(x)^p -\La_1(x^p))\equiv 0.
\eea
More generally, for $s\ge1$ applying $V_{j-1}(x)\equiv0\pmod{p^{j-1}}$ for $0<j\le s$
and
\begin{align*}
\La_s(x)^{p^s}
&\equiv\La_s(x^p)^{p^{s-1}}\pmod{p^s}
\equiv\La_s(x^{p^2})^{p^{s-2}}\pmod{p^{s-1}}
\equiv
\\
&\equiv\dots
\equiv\La_s(x^{p^j})^{p^{s-j}}\pmod{p^{s-j+1}}
\end{align*}
(which follows from iterative use of $F(x^p)^{p^{i-1}}\equiv F(x)^{p^i}\pmod{p^i}$ valid for $i>0$) we deduce modulo $p^s$:
\begin{align*}
V_s(x)
&=W_{s-1}(x)\La_s(x)^{p^s}
-\sum_{j=1}^{s-1}V_{j-1}(x)W_{s-1}^{(j)}(x^{p^j})\La_s(x^{p^j})^{p^{s-j}}
-V_{s-1}(x)\La_s(x^{p^s})
\equiv
\\ &
\equiv\bigg(W_{s-1}(x)
-\sum_{j=1}^{s-1}V_{j-1}(x)W_{s-1}^{(j)}(x^{p^j})
-V_{s-1}(x)\bigg)\La_s(x)^{p^s}
=0,
\end{align*}
giving the required statement.
\end{proof}

For a Laurent polynomial $F(t,z)$ in $t,z$, let $N(F)\subset \R^r$ be the Newton polytope of $F(t,z)$ with respect to 
the $t$ \emph{variables only}.

\begin{lem}
\label{lem 1.2}
For $s=0,1,\dots,l$, we have
\bea
N(V_s) \subset N(\La_0)+ pN(\La_1) + \dots+p^sN(\La_s)\,.
\eea

\end{lem}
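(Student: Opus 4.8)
The plan is to prove the Newton polytope containment by strong induction on $s$, using the recursive definition \eqref{dls} of the ghosts $V_s$. First I would observe the base case $s=0$: since $V_0(x)=W_0(x)=\La_0(x)$, the claim $N(V_0)\subset N(\La_0)$ is an equality, hence trivially true. For the inductive step, I assume the statement holds for all indices $0,1,\dots,s-1$ and analyze each term appearing on the right-hand side of \eqref{dls}, namely $W_s(x)$ and the products $V_{j-1}(x)\,W_s^{(j)}(x^{p^j})$ for $1\le j\le s$.

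The key computational facts I would assemble are the following. For a product of Laurent polynomials, the Newton polytope (in the $t$-variables) satisfies $N(FG)\subset N(F)+N(G)$, and for a substitution $F(x)\mapsto F(x^{q})$ scaling every $t$-exponent by $q$, one has $N(F(x^{q}))=q\,N(F)$. Applying these: the leading term $W_s(x)=\La_0(x)\La_1(x)^{p}\cdots\La_s(x)^{p^s}$ has Newton polytope contained in $N(\La_0)+pN(\La_1)+\dots+p^sN(\La_s)$, which is exactly the target set $P_s$. For the summand indexed by $j$, the inductive hypothesis gives $N(V_{j-1})\subset N(\La_0)+pN(\La_1)+\dots+p^{j-1}N(\La_{j-1})=:P_{j-1}$. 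Meanwhile $W_s^{(j)}(x)=\La_j(x)\La_{j+1}(x)^{p}\cdots\La_s(x)^{p^{s-j}}$, so after the substitution $x\mapsto x^{p^j}$ its Newton polytope is $p^j\bigl(N(\La_j)+pN(\La_{j+1})+\dots+p^{s-j}N(\La_s)\bigr)=p^jN(\La_j)+p^{j+1}N(\La_{j+1})+\dots+p^sN(\La_s)$. Thus the Newton polytope of the $j$-th product is contained in the Minkowski sum $P_{j-1}+\bigl(p^jN(\La_j)+\dots+p^sN(\La_s)\bigr)$, and since $P_{j-1}=N(\La_0)+\dots+p^{j-1}N(\La_{j-1})$, this sum is precisely $P_s$ again.

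Having shown that every term on the right-hand side of \eqref{dls}, both $W_s$ and each product $V_{j-1}(x)W_s^{(j)}(x^{p^j})$, has its Newton polytope contained in the common target polytope $P_s=N(\La_0)+pN(\La_1)+\dots+p^sN(\La_s)$, I would conclude that $V_s(x)$, being a $\Z_p$-linear combination of these, also has $N(V_s)\subset P_s$; here I use that the Newton polytope of a sum (and of a scalar multiple) is contained in the convex hull of the union of the individual Newton polytopes, which in turn lies in $P_s$ because $P_s$ is convex and contains each constituent polytope. This completes the induction.

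I do not expect a genuine obstacle here, as the argument is essentially bookkeeping with Minkowski sums; the only point requiring mild care is the interaction of the exponent-scaling substitution $x\mapsto x^{p^j}$ with the Minkowski-sum structure, i.e.\ verifying that scaling distributes over Minkowski sums, $q(A+B)=qA+qB$, so that the $j$-th summand's polytope telescopes correctly into the missing high-degree part of $P_s$ while the inductive hypothesis supplies the low-degree part $P_{j-1}$. One should also note that $N$ is taken with respect to the $t$-variables \emph{only}, so the substitution and the products must be tracked in those variables; the $z$-variables play no role in the polytope and simply come along for the ride.
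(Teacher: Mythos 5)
Your proof is correct and is exactly the argument the paper intends: the paper's proof of Lemma \ref{lem 1.2} is the one-line remark that the claim ``follows from \eqref{dls} by induction on $s$,'' and your write-up supplies precisely that induction, with the Minkowski-sum bookkeeping (product, scaling under $x\mapsto x^{p^j}$, and convexity for sums) filled in correctly. No gaps; this is the same approach, just made explicit.
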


\begin{proof}
This follows from \eqref{dls} by induction on $s$.
\end{proof}

\subsection{Admissible tuples}

Let $\Dl\subset \Z^r$ be a finite subset.

\begin{defn}
\label{defN}
A tuple $(N_0,N_1,\dots,N_l)$ of convex polytopes in $\R^r$
is called $\Dl$-\emph{admissible} if for any
$0\le i\le j < l$ we have
\bea
\big(\Dl + N_i+ pN_{i+1} + \dots+p^{j-i}N_j\big)\cap p^{j-i+1}\Z^r  \subset p^{j-i+1}\Dl\,.
\eea

\end{defn}  
\vsk.2>

Notice that subtuples $(N_i,N_{i+1},\dots,N_j)$ of a $\Dl$-admissible tuple are also $\Dl$-admissible.

\begin{example}
Let $r=1$, $\Dl=\{0,1\}\subset \Z$ and $N=[-(p-1)/2, 3(p-1)/2]\subset \R$. Then
the tuple
$(N,N, \dots, N)$ is $\Dl$-admissible.

\end{example}

\vsk.2>

\begin{defn}
\label{defn}

A tuple $(\La_0(t,z),\La_1(t,z),\dots,\La_l(t,z))$ of Laurent polynomials 
is called $\Dl$-\emph{ad\-missible}
if the tuple 
$\big(N(\La_0), N(\La_1), \dots, N(\La_l)\big)$ is $\Dl$-admissible.
\end{defn}

\subsection{Hasse--Witt matrix}

For $v\in\Z^r$ denote by $\Cf_v F(t,z)$ the coefficient of $t^v$ in the Laurent polynomial $F(t,z)$;
clearly,  this is a Laurent polynomial in $z$.

\vsk.2>

Given $m\ge 1$ and a finite subset $\Dl\subset \Z^r$ with $g=\#\Dl$,
define  the $g\times g$ \emph{Hasse--Witt matrix} of level $m$ of the Laurent polynomial $F(t,z)$
by the formula
\bean
\label{Cuv}
A(m, F(t,z))
:=
\big( \Cf_{p^mv-u}F(t,z)\big)_{u\in\Dl, v\in\Dl}\,.
\eean
The entries of this matrix are Laurent polynomials in $z$.

Furthermore, for a Laurent polynomial $G(z)$ define $\si(G(z))=G(z^p)$.

\begin{lem}
\label{lem 1.5}

Let $(\La_0(t,z),\La_1(t,z),\dots,\La_l(t,z))$ be a $\Dl$-admissible tuple of Laurent polynomials in $\Z_p[x^{\pm1}]=\Z_p[t^{\pm1}, z^{\pm1}]$. 
Then for $0\le s\le l$ we have
\begin{alignat*}{2}
\textup{(i)} &\;\; &
A(s+1, V_s) &\equiv 0 \pmod{p^s};
\\
\textup{(ii)} &\;\; &
A\big(s+1, W_s\big)
&=
A\big(1,V_0) \cdot  \si\big(A\big(s, W_s^{(1)}\big)\big)
+
A\big(2,V_1) \cdot \si^2\big(A\big(s-1, W_s^{(2)}\big)\big)
+\dots 
+ 
\\ &&
&\qquad
+A\big(s,V_{s-1}) \cdot \si^s\big(A\big(1, W_s^{(s)}\big)\big)
+ A\big(s+1,V_s).
\end{alignat*}

\end{lem}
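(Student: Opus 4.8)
The plan is to prove both parts by unwinding the definitions and recursions directly in terms of the coefficient-extraction operator $\Cf_v$, exploiting the Frobenius-type substitution $\si$ and Lemma \ref{lem dl}.

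\medskip

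\textbf{Part (i).} I would reduce this to Lemma \ref{lem dl} together with the admissibility hypothesis. By Lemma \ref{lem dl} we know $V_s(t,z)\equiv 0\pmod{p^s}$ as a Laurent polynomial, so \emph{every} coefficient $\Cf_w V_s(t,z)$ is divisible by $p^s$. The matrix $A(s+1,V_s)$ has entries $\Cf_{p^{s+1}v-u}V_s(t,z)$ for $u,v\in\Dl$, which are a subcollection of those coefficients; hence each entry is automatically divisible by $p^s$, giving $A(s+1,V_s)\equiv 0\pmod{p^s}$. Thus admissibility is not even needed for (i), only for the cleaner form of (ii); I would remark this and move on.

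\medskip

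\textbf{Part (ii).} The strategy is to expand the defining recursion \eqref{dls} for $V_s$, namely
\[
W_s(x)=V_s(x)+\sum_{j=1}^s V_{j-1}(x)\,W_s^{(j)}(x^{p^j}),
\]
and then extract the Hasse--Witt matrix of level $s+1$ from both sides, showing that extraction turns the product $V_{j-1}(x)\cdot W_s^{(j)}(x^{p^j})$ into the matrix product $A(j,V_{j-1})\cdot\si^{j}(A(s-j+1,W_s^{(j)}))$. The key computational lemma I must establish is a \emph{multiplicativity of coefficient extraction under Frobenius substitution}: for Laurent polynomials $P(t,z)$ and $Q(t,z)$,
\[
\Cf_{p^{s+1}v-u}\bigl(P(t,z)\,Q(t^{p^{j}},z^{p^{j}})\bigr)
=\sum_{\text{intermediate}}\Cf_{?}P(t,z)\cdot\Cf_{?}Q(t,z)\big|_{z\mapsto z^{p^{j}}},
\]
where the exponent bookkeeping must be arranged so that the outer index $p^{s+1}v-u$ splits as a $t$-exponent coming from $P$ plus $p^{j}$ times a $t$-exponent coming from $Q$. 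Writing the $t$-degree contributed by $Q(t^{p^j},\cdot)$ as $p^{j}w$ and that contributed by $P$ as $p^{s+1}v-u-p^{j}w$, I want the $P$-part to be indexed by $p^{j}v'-u$ with $v'\in\Dl$ and the $Q$-part by $p^{s-j+1}v-v'$; this is exactly the shape that produces $A(j,V_{j-1})$ (entries $\Cf_{p^{j}v'-u}V_{j-1}$) multiplied by $\si^{j}$ applied to $A(s-j+1,W_s^{(j)})$ (entries $\Cf_{p^{s-j+1}v-v'}W_s^{(j)}$, then $z\mapsto z^{p^{j}}$). The summation over the intermediate index $v'$ is precisely matrix multiplication over the index set $\Dl$.

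\medskip

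The main obstacle will be justifying that the intermediate index $v'$ ranges \emph{exactly} over $\Dl$ and not over a larger lattice region\,---\,this is where $\Dl$-admissibility enters. A priori $v'$ runs over all of $\Z^r$; I must show that the only nonzero contributions come from $v'\in p^{j}\Z^r$, say $v'=p^{j}v''$ reindexed appropriately, with $v''\in\Dl$. The divisibility forcing $v'$ into $p^{j}\Z^r$ comes from the Frobenius substitution $t\mapsto t^{p^{j}}$ in $Q$: the $Q$-factor only contributes $t$-exponents in $p^{j}\Z^r$, so the $P$-factor must absorb the residue $-u$ modulo $p^{j}$, and the admissibility condition in Definition \ref{defN} (with the Newton-polytope containment from Lemma \ref{lem 1.2}) is exactly what guarantees that the surviving intermediate exponents lie in $p^{j-i+1}\Dl$ at each stage. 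I would carry out this exponent-matching carefully for a single term $j$, verify the endpoint cases $j=s$ (giving $A(s,V_{s-1})\cdot\si^{s}(A(1,W_s^{(s)}))$) and the leftover $W_s=W_{s-1}\La_s^{p^s}$ bookkeeping, and then recognize the full right-hand side of (ii) as the telescoped sum plus the final $A(s+1,V_s)$ term coming from the $V_s$ summand. Once the single-term identity and the admissibility-driven restriction of $v'$ to $\Dl$ are in hand, the statement follows by summing over $j=1,\dots,s$.
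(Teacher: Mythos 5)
Your proposal is correct and follows essentially the same route as the paper's proof: part (i) by observing that the entries of $A(s+1,V_s)$ are among the coefficients of $V_s$, which are all divisible by $p^s$ by Lemma \ref{lem dl}, and part (ii) by extracting the coefficient of $t^{p^{s+1}v-u}$ from the recursion \eqref{dls}, using the Frobenius substitution to force the intermediate exponent into $p^j\Z^r$ and then Lemma \ref{lem 1.2} together with $\Dl$-admissibility to pin it to $p^j\Dl$, so that the intermediate summation index runs exactly over $\Dl$ and yields the matrix products $A(j,V_{j-1})\cdot\si^j\big(A(s-j+1,W_s^{(j)})\big)$. The exponent bookkeeping you outline ($w=p^j\dl-u$, $y=p^{s+1-j}v-\dl$) is precisely the paper's computation.
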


\begin{proof}
Part (i) follows from Lemma \ref{lem dl}. To prove (ii) consider the identity
\begin{align*}
&
\La_0(t,z)\La_1(t,z)^{p}\dots \La_s(t,z)^{p^s}
=
\\ &\qquad
=\sum_{j=1}^sV_{j-1}(t,z)\La_j(t^{p^j},z^{p^j})\La_{j+1}(t^{p^j},z^{p^j})^p\dots\La_s(t^{p^j},z^{p^j})^{p^{s-j}}
+ V_s(t,z),
\end{align*}
which is nothing else but \eqref{dls}.
Let $u,v\in\Dl$. In order to calculate the coefficient of $t^{p^{s+1}v-u}$
in the term $V_{j-1}(t,z)\La_j(t^{p^j},z^{p^j})\dots\La_s(t^{p^j},z^{p^j})^{p^{s-j}}$,
we look for all pairs of vectors $w\in N(V_{j-1})$ and $y \in N(\La_j(t,z)\dots\La_s(t,z)^{p^{s-j}})$
such that
\bea
w+p^jy = p^{s+1}v-u,
\eea
hence  $u+w\in p^j \Z^r$. On the other hand, it follows from Lemma \ref{lem 1.2}
that $w \in N(\La_0)+ pN(\La_1) + \dots+p^{j-1}N(\La_{j-1})$, so that
\bea
u+w\in \Dl + N(\La_0)+ pN(\La_1) + \dots+p^{j-1}N(\La_{j-1}).
\eea
From the $\Dl$-admissibility we deduce that
$u+w = p^j \dl$ for some $\dl\in\Dl$, thus
$w=p^j \dl -\nobreak u$, \  $y=p^{s+1-j}v-\dl$ and
\begin{align*}
&
\Cf_{p^{s+1}v-u}\big(V_{j-1}(t,z)\La_j(t^{p^j},z^{p^j})\La_{j+1}(t^{p^j},z^{p^j})^p\dots\La_s(t^{p^j},z^{p^j})^{p^{s-j}}\big)
=
\\ &\qquad
=\sum_{\dl\in\Dl} \Cf_{p^j\dl-u}(V_{j-1}(t,z)) \cdot
\si^j\big(\Cf_{p^{s+1-j}v-\dl}\big(\La_j(t,z)\La_{j+1}(t,z)^p \dots\La_s(t,z)^{p^{s-j}}\big)\big).
\end{align*}
This proves  (ii).
\end{proof}

Our next results discuss congruences of the type 
$F_1(z)F_2(z)^{-1}\equiv G_1(z)G_2(z)^{-1}\pmod{p^s}$, where
 $F_1,F_2,G_1,G_2$ are $g\times g$ matrices whose entries are Laurent polynomials in $z$.
We consider such congruences when 
the determinants $\det F_2(z)$ and $\det G_2(z)$
are  Laurent polynomials  both nonzero  modulo~$p$.
Using Cramer's rule we write
  the entries of the inverse matrix $F_2(z)^{-1}$ in the 
 form $f_{ij}(z)/\det F_2(z)$ for $f_{ij}(z)\in\Z_p[z^{\pm1}]$ and do a similar computation for $G_2(z)$.
This presents the congruence $F_1(z)F_2(z)^{-1}\equiv G_1(z)G_2(z)^{-1}\pmod{p^s}$ 
in the form $F(z)\cdot1/\det F_2(z)\equiv G(z)\cdot1/\det G_2(z)\pmod{p^s}$ 
for some $g\times g$ matrices $F,G$ with entries in $\Z_p[z^{\pm1}]$,
while the latter is nothing else but the congruence 
$F(z)\cdot \det G_2(z)\equiv  G(z)\cdot \det F_2(z) \pmod{p^s}$.

\begin{thm}
\label{thm 1.6}
Let $(\La_0(t,z),\La_1(t,z),\dots,\La_l(t,z))$ be a $\Dl$-admissible
 tuple of Laurent polynomials in $\Z_p[x^{\pm1}]=\Z_p[t^{\pm1}, z^{\pm1}]$. 
\begin{enumerate}
\item[\textup{(i)}] For $0\leq s\leq l$ we have
\begin{align}
\notag
&
A\big(s+1, \La_0(x)\La_1(x)^{p}\dots \La_s(x)^{p^s}\big)\equiv
\\
\notag
&\qquad
\equiv A\big(1, \La_0(x)\big)\,
\si\big(A\big(1, \La_1(x)\big)\big) \dots \si^s\big(A\big(1, \La_s(x)\big)\big)
\pmod{p}.
\end{align}

\item[\textup{(ii)}] Assume that the determinants of  the matrices $ A\big(1, \La_i(t,z)\big)$, $i=0,1,\dots,l$, 
are Laurent polynomials all nonzero modulo~$p$.
Then for $1\leq s \leq l$ 
the determinant of the matrix $A\big(s, \La_1(x)\La_2(x)^p\dots \La_s(x)^{p^{s-1}} \big)$
is a Laurent polynomial nonzero modulo~$p$ and 
we have modulo $p^s$\,\textup:
\begin{align*}
&
A\big(s+1, \La_0(x)\La_1(x)^{p}\dots \La_s(x)^{p^s}\big)
\cdot
\si\big(A\big(s, \La_1(x)\La_2(x)^p\dots \La_s(x)^{p^{s-1}} \big)\big)^{-1}
\equiv
\\
&\qquad
\equiv
A\big(s, \La_0(x)\La_1(x)^{p}\dots \La_{s-1}(x)^{p^{s-1}}\big)
\cdot
\si\big(A\big(s-1, \La_1(x)\La_2(x)^{p}\dots \La_{s-1}(x)^{p^{s-2}}\big)\big)^{-1},
\end{align*}
where for $s=1$ we understand the second factor on the right
as the  $g\times g$ identity matrix.

\end{enumerate}

\end{thm}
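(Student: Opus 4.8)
The plan is to prove both parts simultaneously by induction on $s$, using the ghost recursion of Lemma \ref{lem 1.5}(ii) as the engine, and to run the induction \emph{uniformly over all shifts}: since subtuples $(\La_a,\La_{a+1},\dots)$ of a $\Dl$-admissible tuple are again $\Dl$-admissible, I may freely apply the lower-level statements to them. For brevity write $P^{[a]}_k:=A\big(k+1,\La_a\La_{a+1}^p\cdots\La_{a+k}^{p^k}\big)$, so that the two sides of (ii) are the ratios $\rho^{[a]}_k:=P^{[a]}_k\,\si\big(P^{[a+1]}_{k-1}\big)^{-1}$, with the convention $P^{[a]}_{-1}:=I$ (the empty product, consistent with $A(0,1)=I$); part (ii) for $a=0$ is the assertion $\rho^{[0]}_s\equiv\rho^{[0]}_{s-1}\pmod{p^s}$.

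I would first dispatch part (i) by a direct induction on $s$. By Lemma \ref{lem dl} and Lemma \ref{lem 1.5}(i) every ghost factor satisfies $A(j,V_{j-1})\equiv0\pmod{p^{j-1}}$ and $A(s+1,V_s)\equiv0\pmod{p^s}$, so modulo $p$ the recursion of Lemma \ref{lem 1.5}(ii) collapses to its single $j=1$ term, giving $A(s+1,W_s)\equiv A(1,\La_0)\,\si\big(A(s,W_s^{(1)})\big)\pmod p$. As $W_s^{(1)}$ is the analogous product for $(\La_1,\dots,\La_s)$, the inductive hypothesis rewrites $\si\big(A(s,W_s^{(1)})\big)$ as $\si\big(A(1,\La_1)\big)\cdots\si^s\big(A(1,\La_s)\big)$ modulo $p$, proving (i). The determinant assertion in (ii) is then immediate: taking determinants in (i) expresses $\det P^{[a]}_k$ as a product of the $\si^i\big(\det A(1,\La_{a+i})\big)$, each nonzero modulo $p$ by hypothesis; since $\F_p[z^{\pm1}]$ is a domain and $z\mapsto z^p$ is injective on it, the product is nonzero modulo $p$. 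Hence each $P^{[a]}_k$ is invertible over the localization of $\Z_p[z^{\pm1}]$ at the multiplicative set of Laurent polynomials nonzero modulo $p$ — exactly the ring in which, after clearing denominators as explained before the theorem, the congruences of (ii) are read, those denominators being non-zero-divisors modulo every $p^s$ — and there $\rho^{[a]}_k\equiv A(1,\La_a)\pmod p$ is a unit.

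For the inductive step of (ii), subtract the $j=1$ term of the recursion to write $\rho^{[0]}_s=A(1,\La_0)+\Theta_s$ with $\Theta_s=\big(\sum_{j=2}^{s}A(j,V_{j-1})\,\si^j(P^{[j]}_{s-j})+A(s+1,V_s)\big)\si(P^{[1]}_{s-1})^{-1}$, and likewise $\rho^{[0]}_{s-1}=A(1,\La_0)+\Theta_{s-1}$. The stray term $A(s+1,V_s)$ is $\equiv0\pmod{p^s}$ and drops, so $\Theta_s-\Theta_{s-1}$ is a sum over $j=2,\dots,s$ of $A(j,V_{j-1})\,(X_j-Y_j)$, where $X_j=\si^j(P^{[j]}_{s-j})\,\si(P^{[1]}_{s-1})^{-1}$ and $Y_j=\si^j(P^{[j]}_{s-1-j})\,\si(P^{[1]}_{s-2})^{-1}$. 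The decisive manipulation is to telescope the chain $P^{[1]}_{s-1}=\rho^{[1]}_{s-1}\,\si\big(P^{[2]}_{s-2}\big)=\cdots$, which (reading factors in decreasing order of $i$) yields $X_j=\prod_{i=j-1}^{1}\si^i\big(\rho^{[i]}_{s-i}\big)^{-1}$ and $Y_j=\prod_{i=j-1}^{1}\si^i\big(\rho^{[i]}_{s-1-i}\big)^{-1}$. By the inductive hypothesis at the strictly smaller level $s-i$, $\rho^{[i]}_{s-i}\equiv\rho^{[i]}_{s-1-i}\pmod{p^{s-i}}$, and since $s-i\ge s-j+1$ for $i\le j-1$ the corresponding factors of $X_j$ and $Y_j$ agree modulo $p^{s-j+1}$; being units modulo $p$, their products do too, so $X_j-Y_j\equiv0\pmod{p^{s-j+1}}$. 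Multiplying by $A(j,V_{j-1})\equiv0\pmod{p^{j-1}}$ makes each summand $\equiv0\pmod{p^s}$, whence $\Theta_s\equiv\Theta_{s-1}$ and $\rho^{[0]}_s\equiv\rho^{[0]}_{s-1}\pmod{p^s}$, completing the induction.

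The main obstacle is the bookkeeping in this last step: one must (a) fold the stray $A(s+1,V_s)$ correctly into the recursion and check it is harmless; (b) carry out the telescoping that rewrites the mixed ratio $\si^j(P^{[j]})\,\si(P^{[1]})^{-1}$ as a product of single-step ratios $\rho^{[i]}$, keeping the noncommutative factor order straight; and (c) match the two sources of $p$-divisibility so that the ghost factor $p^{j-1}$ and the inductive gain $p^{s-j+1}$ combine to exactly $p^s$. A secondary but essential point is purely algebraic: all inversions and cancellations must take place in the localization where the pertinent determinants are invertible, which is legitimate precisely because part (i) forces those determinants to be nonzero modulo $p$, hence non-zero-divisors modulo every power of $p$.
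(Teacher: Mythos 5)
Your proposal is correct and follows essentially the same route as the paper's own proof: part (i) by collapsing the ghost recursion of Lemma \ref{lem 1.5}(ii) modulo $p$ and iterating, and part (ii) by induction on $s$ run uniformly over shifted subtuples, substituting the recursion into both sides, discarding the $A(s+1,V_s)$ term, telescoping the mixed ratios into single-step ratios $\rho^{[i]}$, and combining the ghost divisibility $p^{\,j-1}$ with the inductive gain $p^{\,s-j+1}$. The only differences are cosmetic (your $P^{[a]}_k$, $\rho^{[a]}_k$ notation, and telescoping directly on the inverted products rather than telescoping and then inverting as in the paper's congruence \eqref{17}), so no further comparison is needed.
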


\begin{proof}
By Lemma \ref{lem 1.5} we have
\begin{align*}
&
A\big(s+1, \La_0(x)\La_1(x)^{p}\dots \La_s(x)^{p^s}\big)
\equiv
\\
&\qquad
\equiv
A\big(1,\La_0(x)) \cdot  \si\big(A\big(s, \La_1(x)\La_2(x)^p\dots \La_s(x)^{p^{s-1}}\big)\bigr)
\pmod{p}.
\end{align*}
Iteration gives part (i) of the theorem.

\vsk.2>
If the determinants of  the matrices $ A\big(1, \La_i(t,z)\big)$, $i=0,1,\dots,l$, 
are Laurent polynomials all nonzero modulo~$p$, then part (i) implies that
the determinant
\bea
\det A\big(s, \La_1(x)\La_2(x)^p\dots \La_s(x)^{p^{s-1}} \big)
\equiv \prod_{j=1}^s \det \si^{j-1}\big(A\big(1, \La_j(t,z)\big)\big) \pmod{p},
\eea
is a Laurent polynomial nonzero modulo $p$.  
This proves the first statement of part (ii) of the theorem and allows us to consider the inverse matrices 
in the congruence of part (ii).

\vsk.2>

We prove part (ii) by induction on $s$. The case $s = 1$ follows from part (i).
For $1< s < l$ 
we substitute the expressions for $A(s+1, W_s(x))$ 
and $A(s, W_{s-1}(x))$
from part (ii) of Lemma~\ref{lem 1.5} into the two sides of the desired congruence:
\begin{align*}
&
A\big(s+1, \La_0(x)\La_1(x)^{p}\dots \La_s(x)^{p^s}\big)
\cdot
\si\big(A\big(s, \La_1(x)\La_2(x)^p\dots \La_s(x)^{p^{s-1}} \big)\big)^{-1}
=
\\ &\qquad
= A\big(1,V_0) 
+\sum_{j=2}^s A\big(j,V_{j-1}) \cdot \si^j\big(A\big(s-j+1, W_s^{(j)}\big)\big) \cdot  \si\big(A\big(s, W_s^{(1)}\big)\big)^{-1}
+
\\ &\qquad\quad
+ A\big(s+1,V_s) \cdot \si\big(A\big(s, W_s^{(1)}\big)\big)^{-1}
\\ \intertext{and}
&
A\big(s, \La_0(x)\La_1(x)^{p}\dots \La_{s-1}(x)^{p^{s-1}}\big)
\cdot
\si\big(A\big(s-1, \La_1(x)\La_2(x)^{p}\dots \La_{s-1}(x)^{p^{s-2}}\big)\big)^{-1}
=
\\ &\qquad
= A\big(1,V_0)
+\sum_{j=2}^s A\big(j,V_{j-1}) \cdot \si^j\big(A\big(s-j, W_{s-1}^{(j)}\big)\big) \cdot \si\big(A\big(s-1, W_{s-1}^{(1)}\big)\big)^{-1}.
\end{align*}
Since we want to compare these two expressions modulo $p^s$,
the last term in the upper sum containing
$A(s+1,V_s) \equiv 0 \pmod{p^s}$ can be ignored.

Given $j = 2, \dots , s$, we use the inductive hypothesis as
follows:
\begin{align*}
&
A\big(s-i+1, W_s^{(i)}\big)
\cdot
\si\big(A\big(s-i, W_s^{(i+1)} \big)\big)^{-1}
\equiv
\\ &\qquad
\equiv
A\big(s-i, W_{s-1}^{(i)}\big)
\cdot
\si\big(A\big(s-i-1, W_{s-1}^{(i+1)}\big)\big)^{-1}
\pmod{p^{s-i}}
\end{align*}
for $i=1,\dots,j-1$. Applying $\sigma^{i-1}$ to the $i$-th congruence and multiplying them out lead to telescoping products on both sides:
\begin{align*}
&
A\big(s, W_s^{(1)}\big)
\cdot
\si^{j-1}\big(A\big(s-j+1, W_s^{(j)}\big)\big)^{-1} \equiv
\\ &\qquad
\equiv
A\big(s-1, W_{s-1}^{(1)}\big)
\cdot
\si^{j-1}\big(A\big(s-j, W_{s-1}^{(j)}\big)\big)^{-1}
\pmod{p^{s-j+1}}.
\end{align*}
By our assumptions these four matrices are invertible.
Therefore, we can invert them to obtain the congruence
\begin{align}
\label{17}
&
\si^{j-1}\big(A\big(s-j+1, W_s^{(j)} \big)\big)
\cdot
A\big(s, W_s^{(1)}\big)^{-1}
\equiv
\\ &\qquad
\notag
\equiv
\si^{j-1}\big(A\big(s-j, W_{s-1}^{(j)}\big)\big)
\cdot
A\big(s-1, W_{s-1}^{(1)}\big)^{-1}
\pmod{p^{s-j+1}}.
\end{align}
Since $A(j,V_{j-1})\equiv 0\pmod{p^{j-1}}$, application of $\si$ to \eqref{17} and summation in $j$ of the resulted congruences
\begin{align*}
&
A(j,V_{j-1}(x))
\cdot
\si^j\big(A\big(s-j+1, W_s^{(j)} \big)\big)
\cdot
\si\big(A\big(s, W_s^{(1)}\big)\big)^{-1}\equiv
\\ &\qquad
\equiv
A(j,V_{j-1}(x))
\cdot
\si^j\big(A\big(s-j, W_{s-1}^{(j)}\big)\big)
\cdot
\si\big(A\big(s-1, W_{s-1}^{(1)}\big)\big)^{-1}
\pmod{p^{s}}
\end{align*}
completes the proof of part~(ii) of the theorem.
\end{proof}

\begin{cor}

Under the assumptions of part \textup{(ii)} of Theorem \textup{\ref{thm 1.6}}
for $1\leq s \leq l$ 
we have\,\textup:
\begin{align*}
&
\det A\big(s+1, \La_0(x)\La_1(x)^{p}\dots \La_s(x)^{p^s}\big)
\cdot
\det \si\big(A\big(s-1, \La_1(x)\La_2(x)^{p}\dots \La_{s-1}(x)^{p^{s-2}}\big)\big)
\equiv
\\
&
\equiv
\det A\big(s, \La_0(x)\La_1(x)^{p}\dots \La_{s-1}(x)^{p^{s-1}}\big)
\cdot
\det \si\big(A\big(s, \La_1(x)\La_2(x)^p\dots \La_s(x)^{p^{s-1}} \big)\big)
\!\!\!
\pmod{p^s}\,.
\end{align*}

\end{cor}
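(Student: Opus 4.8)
The plan is to obtain the corollary by taking determinants in the congruence of part~(ii) of Theorem~\ref{thm 1.6}. To keep the notation light, write
\beq
\label{cor-abbr}
P=A\big(s+1,W_s\big),\quad Q=\si\big(A\big(s,W_s^{(1)}\big)\big),\quad R=A\big(s,W_{s-1}\big),\quad S=\si\big(A\big(s-1,W_{s-1}^{(1)}\big)\big),
\eeq
so that part~(ii) of Theorem~\ref{thm 1.6} reads $PQ^{-1}\equiv RS^{-1}\pmod{p^s}$, with the convention $S=\mathrm{Id}$ when $s=1$. By the first assertion of part~(ii) (applied to $s$ and, for $s\ge2$, to $s-1$; for $s=1$ the matrix $S$ is the identity by convention), the Laurent polynomials $\det A\big(s,W_s^{(1)}\big)$ and $\det A\big(s-1,W_{s-1}^{(1)}\big)$ are nonzero modulo~$p$; since $\si$ is the injective substitution $z\mapsto z^p$ and commutes with the determinant, $\det Q$ and $\det S$ are likewise nonzero modulo~$p$. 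Unwinding the definition of the matrix congruence recalled just before Theorem~\ref{thm 1.6}\,---\,that is, $Q^{-1}=\on{adj}(Q)/\det Q$ and $S^{-1}=\on{adj}(S)/\det S$ with adjugates having entries in $\Z_p[z^{\pm1}]$\,---\,the congruence $PQ^{-1}\equiv RS^{-1}\pmod{p^s}$ is exactly the genuine matrix congruence
\beq
\label{cor-star}
P\,\on{adj}(Q)\,\det S\equiv R\,\on{adj}(S)\,\det Q\pmod{p^s}
\eeq
over $\Z_p[z^{\pm1}]$.

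The next step is to take determinants of both sides of \eqref{cor-star}. Since the determinant is a polynomial with integer coefficients in the matrix entries, an entrywise congruence modulo~$p^s$ implies the congruence of the two determinants modulo~$p^s$. Using $\det(cM)=c^{g}\det M$ for a scalar Laurent polynomial $c$ (here $g=\#\Dl$ is the matrix size), together with the identity $\det\on{adj}(M)=(\det M)^{g-1}$, the determinants of the two sides of \eqref{cor-star} become
\beq
\label{cor-det}
\det P\,(\det Q)^{g-1}(\det S)^{g}\equiv\det R\,(\det S)^{g-1}(\det Q)^{g}\pmod{p^s},
\eeq
which, after extracting the common factor $u:=(\det Q)^{g-1}(\det S)^{g-1}$, is the same as
\beq
\label{cor-fac}
\big(\det P\,\det S-\det R\,\det Q\big)\,u\equiv0\pmod{p^s}.
\eeq

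It then remains to cancel the factor $u$, and this is the one delicate point. For a nonzero $h\in\Z_p[z^{\pm1}]$ let $\on{ord}_p(h)$ denote the largest power of $p$ dividing all its coefficients; writing $h=p^{\on{ord}_p(h)}\bar h$ with $\bar h\not\equiv0\pmod p$ and using that $\F_p[z^{\pm1}]$ is an integral domain, one gets additivity $\on{ord}_p(hh')=\on{ord}_p(h)+\on{ord}_p(h')$. As $\det Q$ and $\det S$ are nonzero modulo~$p$, so is $u$, i.e.\ $\on{ord}_p(u)=0$; hence $\on{ord}_p(hu)=\on{ord}_p(h)$ for every $h$, and $hu\equiv0\pmod{p^s}$ forces $h\equiv0\pmod{p^s}$. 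Applying this with $h=\det P\,\det S-\det R\,\det Q$ turns \eqref{cor-fac} into
\beq
\label{cor-final}
\det P\,\det S\equiv\det R\,\det Q\pmod{p^s},
\eeq
which, after substituting back the meanings \eqref{cor-abbr} of $P,Q,R,S$, is precisely the assertion of the corollary. I expect the cancellation in passing from \eqref{cor-fac} to \eqref{cor-final} to be the main obstacle: a congruence modulo~$p^s$ cannot in general be divided by an arbitrary Laurent polynomial, and one has to use that being nonzero modulo~$p$ is exactly the condition making $u$ a non-zero-divisor modulo~$p^s$. All the remaining steps are the multiplicativity of the determinant and the bookkeeping identities $\det(cM)=c^{g}\det M$ and $\det\on{adj}(M)=(\det M)^{g-1}$.
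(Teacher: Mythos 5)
Your proof is correct and is exactly the argument the paper intends (the corollary is stated without proof as an immediate consequence of Theorem \ref{thm 1.6}(ii)): you take determinants in the congruence of part (ii), interpreted via the adjugate convention spelled out just before the theorem, and then cancel the factor $(\det Q)^{g-1}(\det S)^{g-1}$, which is legitimate precisely because $\det Q$ and $\det S$ are nonzero modulo $p$ and $\F_p[z^{\pm1}]$ is an integral domain, so this factor is a non-zero-divisor modulo $p^s$. The details you supply (the identity $\det\operatorname{adj}(M)=(\det M)^{g-1}$, the behaviour of $\si$ under $\det$, and the $\operatorname{ord}_p$-additivity argument for the cancellation) are all sound.
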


\subsection{Derivatives}

Recall that $z=(z_1,\dots,z_n)$. Denote
\bea
D_v=\frac{\der}{\der z_v}, \quad v=1,\dots,n.
\eea

\vsk.2>
Let $F_1(z),F_2(z),G_1(z),G_2(z) \in \Z_p[z^{\pm1}]$  and $m\geq 1$.
If
\bea
D_v(F_1(z))\cdot F_2(z)\equiv  D_v(G_1(z))\cdot G_2(z)\pmod{p^s}\,,
\eea
then
\begin{align}
\label{1.5}
&
D_v(\si^m(F_1(z)))\cdot\si^m(F_2(z)) - D_v(\si^m(G_1(z)))\cdot\si^m(G_2(z))
=
\\ &\qquad
= D_v(F_1(z^{p^m}))\cdot F_2(z^{p^m}) - D_v(G_1(z^{p^m}))\cdot G_2(z^{p^m})
=
\notag
\\ &\qquad
= p^mz_v^{p^m-1}\big(D_v(F_1(z))\cdot F_2(z) - D_v(G_1(z))\cdot G_2(z)\big)\big|_{z\to z^{p^m}}
\equiv
\notag
\\ &\qquad
\equiv 0 \pmod{p^{s+m}}.
\notag
\end{align}

\vsk.2>

\begin{thm}
\label{thm der}
Let $(\La_0(t,z),\La_1(t,z), \dots, \La_l(t,z))$ be a $\Dl$-admissible tuple of Laurent polynomials in $\Z_p[x^{\pm1}]=\Z_p[t^{\pm1},z^{\pm1}]$. Let $D=D_v$ for some $v=1,\dots,n$.
Then under assumptions  of part \textup{(ii)} of Theorem \textup{\ref{thm 1.6}} we have
\begin{align}
\label{Der}
&
D\big(\si^m\big(A\big(s+1, \La_0\La_1^p\dots \La_s^{p^s}\big)\big)\big)
\,\cdot\,
\si^m\big(A\big(s+1, \La_0\La_1^p\dots \La_s^{p^s}\big)\big)^{-1}
\equiv
\\ &\qquad
\notag
\equiv
D\big(\si^m\big(A\big(s, \La_0\La_1^p\dots \La_{s-1}^{p^{s-1}}\big)\big)\big)
\cdot
\si^m\big(A\big(s, \La_0\La_1^p\dots \La_{s-1}^{p^{s-1}}\big)\big)^{-1}
\pmod{p^{s+m}}
\end{align}
for all  $v=1,\dots,n$,  $1\leq s \leq l$  and  $m\geq 0$.

\end{thm}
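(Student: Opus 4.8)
The plan is to reduce the statement for general $m\ge0$ to the case $m=0$ and then to prove the case $m=0$ by induction on $s$, by differentiating the congruence of Theorem \ref{thm 1.6}(ii). Throughout I would work with the logarithmic derivatives $L(M):=D(M)\,M^{-1}$; since by Theorem \ref{thm 1.6}(ii) all the determinants occurring below are Laurent polynomials nonzero modulo $p$, I localize $\Z_p[z^{\pm1}]$ at the set of Laurent polynomials not vanishing modulo~$p$, so that these determinants become units (this localization is compatible with reduction modulo $p^s$, and it is exactly the setting in which the cleared-denominator congruences fixed before Theorem \ref{thm 1.6} live). I would repeatedly use three elementary facts in this localization: (a) if $M\equiv N\pmod{p^s}$ with $\det M,\det N$ units, then $M^{-1}\equiv N^{-1}$ and hence $L(M)\equiv L(N)\pmod{p^s}$; (b) the product rule $L(MN)=L(M)+M\,L(N)\,M^{-1}$, which applied to $M\cdot M^{-1}$ also gives $L(M^{-1})=-M^{-1}L(M)M$; and (c) the scaling rule $D(\si^m(M))=p^m z_v^{p^m-1}\,\si^m(D(M))$, whence $L(\si^m(M))=p^m z_v^{p^m-1}\,\si^m(L(M))$.

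Writing $X_s:=A\big(s+1,\La_0\La_1^{p}\cdots\La_s^{p^s}\big)$, the case $m=0$ asserts $L(X_s)\equiv L(X_{s-1})\pmod{p^s}$, and the general case then follows at once from (c): the key identity is
\begin{align*}
L(\si^m(X_s))-L(\si^m(X_{s-1}))
&=p^m z_v^{p^m-1}\,\si^m\big(L(X_s)-L(X_{s-1})\big),
\end{align*}
and since $\si^m$ maps the localization to itself over $\Z_p$ it preserves divisibility by $p^s$, so the prefactor $p^m$ upgrades the $m=0$ congruence modulo $p^s$ to one modulo $p^{s+m}$. This is precisely the matrix incarnation of the computation \eqref{1.5}, so I regard this reduction as routine.

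The substance is the case $m=0$, which I would prove by induction on $s\ge1$, simultaneously for every $\Dl$-admissible tuple; recall subtuples of a $\Dl$-admissible tuple are $\Dl$-admissible, so the inductive hypothesis applies to the shifted tuple $(\La_1,\dots,\La_l)$. Let $Y_s:=A\big(s+1,\La_1\La_2^{p}\cdots\La_{s+1}^{p^s}\big)$ be the analogous matrices for the shifted tuple and set $C_s:=X_s\,\si(Y_{s-1})^{-1}$ (with $C_0:=X_0$), so that Theorem \ref{thm 1.6}(ii) reads $C_s\equiv C_{s-1}\pmod{p^s}$. Expanding $L(C_s)$ by (b), using $L(\si(Y_{s-1})^{-1})=-\si(Y_{s-1})^{-1}L(\si(Y_{s-1}))\si(Y_{s-1})$ and the scaling rule (c) with $m=1$ to extract a single factor $p$, the key identities would be
\begin{align*}
L(C_s)&=L(X_s)-p z_v^{p-1}\,C_s\,\si(L(Y_{s-1}))\,C_s^{-1},
\\
L(C_{s-1})&=L(X_{s-1})-p z_v^{p-1}\,C_{s-1}\,\si(L(Y_{s-2}))\,C_{s-1}^{-1}.
\end{align*}
Since $L(C_s)\equiv L(C_{s-1})\pmod{p^s}$ by (a) and Theorem \ref{thm 1.6}(ii), subtracting gives
\begin{align*}
L(X_s)-L(X_{s-1})
&\equiv p z_v^{p-1}\big(C_s\,\si(L(Y_{s-1}))\,C_s^{-1}-C_{s-1}\,\si(L(Y_{s-2}))\,C_{s-1}^{-1}\big)\pmod{p^s}.
\end{align*}
The inductive hypothesis applied to the shifted tuple at level $s-1$ gives $L(Y_{s-1})\equiv L(Y_{s-2})\pmod{p^{s-1}}$, while $C_s\equiv C_{s-1}\pmod{p^{s-1}}$ from Theorem \ref{thm 1.6}(ii); hence the parenthesized difference vanishes modulo $p^{s-1}$ and, multiplied by $p$, yields $L(X_s)\equiv L(X_{s-1})\pmod{p^s}$. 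For the base case $s=1$ the term with $Y_{s-2}$ is absent ($C_0=X_0$) and the bracket is only needed modulo $p^{0}$, so the single factor $p$ already closes it without any hypothesis.

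The main obstacle I anticipate is bookkeeping rather than conceptual. One must keep the product rule (b) honest in the noncommutative matrix setting, where genuine conjugations $C_s(\,\cdots)C_s^{-1}$ appear and cannot be dropped; and one must justify every inverse and quotient manipulation by working in the localization in which each determinant in sight is a unit, so that ``$\equiv\pmod{p^s}$'' carries its cleared-denominator meaning. The decisive structural point that makes the induction close is that each application of $\si$ contributes a factor of $p$ through (c): this is exactly what converts the merely order-$p^{s-1}$ control of the conjugated bracket into the order-$p^{s}$ congruence required, and what trivializes the base case.
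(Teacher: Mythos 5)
Your argument is correct, and it takes a genuinely different route from the paper's. Both proofs share the same outer skeleton --- reduction to $m=0$ via the scaling identity \eqref{1.5}, then induction on $s$ run simultaneously over all admissible tuples so that the hypothesis can be applied to the shifted tuple $(\La_1,\dots,\La_l)$ --- but the inductive steps differ in substance. The paper re-enters the ghost machinery: it expands $D(A(s+1,W_s))\,A(s+1,W_s)^{-1}$ through the decomposition of Lemma \ref{lem 1.5}(ii) and matches the two resulting sums term by term, using $A(j,V_{j-1})\equiv 0$ and $D(A(j,V_{j-1}))\equiv 0\pmod{p^{j-1}}$ together with the congruence \eqref{17a}, which is imported from \emph{inside} the proof of Theorem \ref{thm 1.6}(ii). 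You instead use Theorem \ref{thm 1.6}(ii) purely as a black box: factoring $X_s=C_s\,\si(Y_{s-1})$, where $C_s$ is the ratio matrix of that theorem, the product rule for the logarithmic derivative $L(M)=D(M)M^{-1}$ plus the factor $p$ produced by $\si$ convert the congruence $C_s\equiv C_{s-1}\pmod{p^s}$ and the $m=0$ hypothesis for the shifted tuple directly into $L(X_s)\equiv L(X_{s-1})\pmod{p^s}$. This is shorter and more modular: it exhibits the derivative congruence as a formal consequence of the Dwork congruence itself (holding for all shifted subtuples), whereas the paper's term-by-term matching stays inside the ghost formalism it has already built and reuses intermediate congruences it needs anyway. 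Two small points you should make explicit, though neither is a gap: the invertibility of $X_s$ over your localization (that $\det A(s+1,W_s)$ is nonzero mod $p$) is not literally asserted in Theorem \ref{thm 1.6}(ii) but follows from part (i); and $\si$ preserves your localization because $g(z^p)\equiv g(z)^p\pmod p$, and this Frobenius stability of the multiplicative set is what lets your fact (a) be applied after hitting the inductive hypothesis with $\si$.
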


\begin{proof}

First we notice that it is sufficient to establish the congruences \eqref{Der} for $m=0$, 
as the general $m$ case follows from \eqref{1.5}. So, we assume that $m=0$ and 
proceed by induction on $s\ge0$. For $s=0$  the statement is trivially true.

Consider the case of general $s$. Using part (ii) of Lemma \ref{lem 1.5} we can write
\begin{align*}
&
D(A(s+1, W_s))\,A(s+1, W_s)^{-1}
=
\\ &\qquad
=
\sum_{j=1}^{s+1} D(A(j,V_{j-1})) \, \si^j(A(s-j+1, W_s^{(j)})) \, A(s+1, W_s)^{-1}
+
\\ &\qquad\quad
+
\sum_{j=1}^{s+1} A(j,V_{j-1}) \,  D(\si^j(A(s-j+1, W_s^{(j)}))) \,A(s+1, W_s)^{-1}
\\ \intertext{and}
&
D(A(s, W_{s-1}))\, A(s, W_{s-1})^{-1}
=
\\ &\qquad
=
\sum_{j=1}^{s} D(A(j,V_{j-1})) \,  \si^j(A(s-j, W_{s-1}^{(j)})) \, A(s, W_{s-1})^{-1}
+
\\ &\qquad\quad
+
\sum_{j=1}^s A(j,V_{j-1}) \,  D(\si^j(A(s-j, W_{s-1}^{(j)}))) \, A(s, W_{s-1})^{-1}.
\end{align*}
The summand corresponding to $j=s+1$ in the first expression vanishes modulo $p^s$, because $A(s+1,V_s)\equiv0\pmod{p^s}$, implying that $D(A(s+1,V_s))\equiv0\pmod{p^s}$.
For the same reason $D(A(j,V_{j-1}))\equiv0\pmod{p^{j-1}}$ more generally;
combining this with the congruence 
\begin{align}
\label{17a}
&
\si^j(A(s-j+1, W_s^{(j)})) \, A(s+1, W_s)^{-1}
\equiv
\\ &\qquad
\notag
\equiv
\si^j(A(s-j, W_{s-1}^{(j)})) \, A(s, W_{s-1})^{-1}
\pmod{p^{s-j+1}}
\end{align}
and summing over $j$ we arrive at
\begin{align*}
&
\sum_{j=1}^{s+1} D(A(j,V_{j-1})) \, \si^j(A(s-j+1, W_s^{(j)})) \, A(s+1, W_s)^{-1}
\equiv
\\ &\qquad
\equiv
\sum_{j=1}^s D(A(j,V_{j-1})) \, \si^j(A(s-j, W_{s-1}^{(j)})) \, A(s, W_{s-1})^{-1}
\pmod{p^s}.
\end{align*}
Here \eqref{17a} follows from \eqref{17}, in which we take $j+1$ and $s+1$ for $j$ and $s$ and use 
$W_s=\La_0\La_1^p\dots\La_s^{p^s}$ instead of $W_{s+1}^{(1)}=\La_1\La_2^p\dots\La_{s+1}^{p^s}$.

To match the other sums we recall the inductive hypothesis in the form
\begin{align}
\label{18.1}
&
D(\si^j(A(s-j+1, W_s^{(j)}))) \, \si^j(A(s-j+1, W_s^{(j)}))^{-1}
\equiv
\\&\qquad
\notag
\equiv
D(\si^j(A(s-j, W_{s-1}^{(j)}))) \, \si^j(A(s-j, W_{s-1}^{(j)}))^{-1}
\pmod{p^s}
\end{align}
and notice that both sides in \eqref{18.1} are congruent to zero modulo $p^j$ by formula \eqref{1.5}. 
Therefore, multiplying congruences \eqref{18.1} and \eqref{17a} (in this order!) we obtain
\begin{align*}
&
D(\si^j(A(s-j+1, W_s^{(j)}))) \,A(s+1, W_s)^{-1}
\equiv
\\ 
&\qquad
\equiv
  D(\si^j(A(s-j, W_{s-1}^{(j)}))) \,A(s, W_{s-1})^{-1}
\pmod{p^s};
\end{align*}
then multiplying both sides of this congruence by  $A(j,V_{j-1})$ from the left and summing over $j$ we deduce
\begin{align*}
&
\sum_{j=1}^s A(j,V_{j-1}) \,  D(\si^j(A(s-j+1, W_s^{(j)}))) \,A(s+1, W_s)^{-1}
\equiv
\\ &\qquad
\equiv
\sum_{j=1}^s A(j,V_{j-1}) \,  D(\si^j(A(s-j, W_{s-1}^{(j)}))) \,A(s, W_{s-1})^{-1}
\pmod{p^s}.
\end{align*}
This completes the proof of the theorem.
\end{proof}

There are similar congruences for higher order derivatives of the matrices $A(s+1, W_s)$.
We restrict ourselves with the second order derivatives.

\begin{thm}
\label{thm der2}
Let $(\La_0(t,z),\La_1(t,z), \dots, \La_l(t,z))$ be a $\Dl$-admissible tuple of Laurent polynomials in 
$\Z_p[x^{\pm1}]=\Z_p[t^{\pm1},z^{\pm1}]$. 
Then under assumptions  of part \textup{(ii)} of Theorem \textup{\ref{thm 1.6}} we have
\begin{align}
\label{Der2}
&
D_u\big(D_v\big(A\big(s+1, \La_0\La_1^p\dots \La_s^{p^s}\big)\big)\big)
\,\cdot\,
A\big(s+1, \La_0\La_1^p\dots \La_s^{p^s}\big)^{-1}
\equiv
\\ &\qquad
\notag
\equiv
D_u\big(D_v\big(A\big(s, \La_0\La_1^p\dots \La_{s-1}^{p^{s-1}}\big)\big)\big)
\cdot
A\big(s, \La_0\La_1^p\dots \La_{s-1}^{p^{s-1}}\big)^{-1}
\pmod{p^{s+2m}}
\end{align}
for all $1\leq u, v\leq n$ and $s\geq 0$.
\end{thm}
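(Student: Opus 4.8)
The plan is to reduce the second-order congruence \eqref{Der2} to the first-order congruence of Theorem \ref{thm der} together with an untwisted base case, by means of the matrix logarithmic derivative. For an invertible matrix $M(z)$ over $\Z_p[z^{\pm1}]$, the product rule and $D_u(M^{-1})=-M^{-1}D_u(M)M^{-1}$ give
\begin{align*}
D_u\big(D_v(M)\big)\cdot M^{-1}
=D_u\big(D_v(M)\,M^{-1}\big)+\big(D_v(M)\,M^{-1}\big)\big(D_u(M)\,M^{-1}\big).
\end{align*}
Writing $L_v(M):=D_v(M)\,M^{-1}$ this reads $D_u(D_v(M))M^{-1}=D_u(L_v(M))+L_v(M)\,L_u(M)$, so \eqref{Der2} becomes a congruence for the difference of $D_u(L_v)+L_vL_u$ formed from $P:=\si^m(A(s+1,W_s))$ and $Q:=\si^m(A(s,W_{s-1}))$. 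I will use throughout the chain rule $D_v(\si^m(K))=p^mz_v^{p^m-1}\si^m(D_v(K))$ behind \eqref{1.5}, whose matrix consequence $L_v(\si^m(K))=p^mz_v^{p^m-1}\si^m(L_v(K))$ shows that every twisted logarithmic derivative is divisible by $p^m$.

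I would first establish the base case $m=0$ of \eqref{Der2}, namely $D_u(D_v(A(s+1,W_s)))A(s+1,W_s)^{-1}\equiv D_u(D_v(A(s,W_{s-1})))A(s,W_{s-1})^{-1}\pmod{p^s}$, by induction on $s\ge0$ following the proof of Theorem \ref{thm der}; the case $s=0$ is trivial. Expanding $A(s+1,W_s)$ by part (ii) of Lemma \ref{lem 1.5} and applying the product rule twice produces, besides the principal terms $A(j,V_{j-1})\,D_u(D_v(\si^j(A(s-j+1,W_s^{(j)}))))$, mixed terms in which one or both derivatives fall on the ghost factor $A(j,V_{j-1})$. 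Since $A(j,V_{j-1})\equiv0\pmod{p^{j-1}}$ by part (i) of Lemma \ref{lem 1.5}, the same divisibility holds for $D_u(A(j,V_{j-1}))$ and for $D_u(D_v(A(j,V_{j-1})))$; hence the $j=s+1$ contribution is zero modulo $p^s$, the mixed terms are governed by the first-order congruences \eqref{17a} and \eqref{18.1}, and the principal terms telescope exactly as in Theorem \ref{thm der}, now invoking the inductive second-order hypothesis. Matching the two expansions completes the base case.

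To lift to general $m$ I apply the identity above to $P$ and $Q$ and treat the two resulting differences separately. For the product term, $L_v(P)L_u(P)-L_v(Q)L_u(Q)=L_v(P)(L_u(P)-L_u(Q))+(L_v(P)-L_v(Q))L_u(Q)$; since $L_u(P)-L_u(Q)\equiv0\pmod{p^{s+m}}$ and $L_v(P)-L_v(Q)\equiv0\pmod{p^{s+m}}$ by Theorem \ref{thm der}, while each factor $L_v(P)$ and $L_u(Q)$ is divisible by $p^m$ as noted above, this difference lies in $p^{s+2m}$. For the derivative term I substitute $L_v(P)-L_v(Q)=p^mz_v^{p^m-1}\si^m(\Delta_v)$, where $\Delta_v:=L_v(A(s+1,W_s))-L_v(A(s,W_{s-1}))\equiv0\pmod{p^s}$ by the base case; then $D_u$ of this quantity splits by the product rule into $p^{2m}z_u^{p^m-1}z_v^{p^m-1}\si^m(D_u(\Delta_v))$, which lies in $p^{s+2m}$, plus a remaining summand supported on $u=v$.

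The main obstacle is exactly this remaining summand, $p^m(p^m-1)\,\delta_{uv}\,z_v^{p^m-2}\,\si^m(\Delta_v)$, produced when $D_u$ falls on the monomial $z_v^{p^m-1}$ rather than on the twisted matrix. Here only a single factor $p^m$ is gained from the twist, so the naive estimate delivers only $p^{s+m}$, and reaching the full modulus $p^{s+2m}$ on the diagonal $u=v$ is the delicate point of the argument. I expect the crux to lie precisely here: attaining $p^{s+2m}$ for this term would require the first-order congruence $L_v(A(s+1,W_s))\equiv L_v(A(s,W_{s-1}))$ to hold modulo $p^{s+m}$ rather than merely $p^s$ — equivalently, propagating a sharpened first-order hypothesis through the induction of the base case — and establishing (or otherwise circumventing, e.g.\ by exploiting additional structure of the Hasse--Witt matrices) this sharper first-order input is the step I anticipate to be genuinely hard, whereas the telescoping bookkeeping is routine once the first-order inputs are secured.
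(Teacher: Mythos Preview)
Your reading of the statement is the source of the difficulty. Notice that the congruence \eqref{Der2} contains no $\sigma^m$ twist at all; the exponent $p^{s+2m}$ is a leftover (compare the paper's proof, which works modulo $p^s$, and Theorem~\ref{thm conv3}, which applies the result without any twist). So the only thing to prove is the untwisted case, which you call the ``base case $m=0$''. Your long analysis of general $m$, including the diagonal obstruction you isolate, is aimed at a statement the paper neither claims nor proves.

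For the untwisted case your plan works but is far more elaborate than necessary. You already wrote down the key identity
\[
D_u(D_v(M))\,M^{-1}=D_u\!\big(L_v(M)\big)+L_v(M)\,L_u(M),
\qquad L_w(M):=D_w(M)\,M^{-1},
\]
and this alone, combined with Theorem~\ref{thm der} (with $m=0$), gives the result immediately: the congruence $L_v(A(s+1,W_s))\equiv L_v(A(s,W_{s-1}))\pmod{p^s}$ can be differentiated termwise to yield $D_u(L_v(\cdot))$ congruent modulo $p^s$, and the product term is handled by the telescoping
\[
L_v(M)L_u(M)-L_v(N)L_u(N)=L_v(M)\big(L_u(M)-L_u(N)\big)+\big(L_v(M)-L_v(N)\big)L_u(N),
\]
each summand lying in $p^s$ by Theorem~\ref{thm der} again. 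This is exactly the paper's argument: it simply applies $D_u$ to \eqref{Der} with $m=0$ and cancels the cross terms using \eqref{Der} once more. Your proposed fresh induction on $s$ via the ghost expansion of Lemma~\ref{lem 1.5} would also succeed, but it duplicates work already encapsulated in Theorem~\ref{thm der}.
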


\begin{proof}

Notice that, for an invertible  matrix $F(z)$  and a derivation $D$,
we have $D(F^{-1})=-F^{-1}\,D(F)\,F^{-1}$.

We apply the derivation $D_u$ to congruence \eqref{Der} with  $D=D_v$ and $m=0$:
\begin{align*}
&
D_u(D_v(A(s+1, \dots))) \, A(s+1, \dots)^{-1}
-
\\ &\quad
- D_v(A(s+1,\dots)) \, A(s+1, \dots)^{-1} \,
D_u(A(s+1, \dots)) \, A(s+1,\dots)^{-1}
\equiv
\\ &\;
\equiv
D_u(D_v(A(s,\dots))) \, A(s,\dots)^{-1}
-
\\ &\;\quad
- D_v(A(s, \dots)) \, A(s,\dots)^{-1} \,
D_u(A(s, \dots)) \, A(s,\dots)^{-1}
\pmod{p^s},
\end{align*}
where we write $A(s+1,\dots)$ and $A(s,\dots)$ for
\bea
A(s+1, \La_0\La_1^p\dots \La_s^{p^s}) \quad\text{and}\quad A(s, \La_0\La_1^p\dots \La_{s-1}^{p^{s-1}}).
\eea
It remains to apply \eqref{Der} with $D=D_u$ and $D=D_v$ and $m=0$
 to see that the second terms on both sides agree modulo~$p^s$.
After their cancellation we are left with the required congruences in~\eqref{Der2}.
\end{proof}

\section{Convergence}
\label{sec 3}

\subsection{Infinite tuples}

Let $\La=(\La_0(x), \La_1(x), \La_2(x),\dots )$ be an infinite $\Dl$-admissible tuple 
of Laurent polynomials in $\Z_p[x^{\pm1}]$ with only finitely many
distinct elements. Thus there is a finite set $\{F_1(x),\dots, F_f(x)\}$ $\subset$ 
$\Z_p[x^{\pm1}]=\Z_p[t^{\pm1},z^{\pm1}]$ of distinct Laurent polynomials
such that for any $j$ there is a unique $1\leq i_j\leq f$
with $\La_j(x) = F_{i_j}(x)$.

\begin{defn}
\label{def F}
The $\Dl$-admissible tuple $\La$ is called \emph{nondegenerate}, if
 for any $i=1,\dots,f$, the Laurent polynomial $\det A(1, F_i(x))\in
\Z_p[z^{\pm1}]$ is nonzero modulo~$p$. 

\end{defn}

Recall the notation:
\bea
W_s(x)=\La_0(x)\La_{1}(x)^{p}\dots \La_s(x)^{p^{s}},\quad
W_s^{(j)}(x)=\La_j(x)\La_{j+1}(x)^{p}\dots \La_s(x)^{p^{s-j}}.
\eea
If a $\Dl$-admissible tuple $\La$ is nondegenerate, then
for any $0\leq j\leq s$,  the Laurent polynomials $\det A(s-j+1,W_s^{(j)}(x))\in \Z_p[z^{\pm1}]$ 
are not congruent to zero modulo~$p$ and we may consider congruences involving the inverse matrices
$A(s-j+1,W_s^{(j)}(x))^{-1}$.

\subsection{Domain of convergence}

Recall that $z=(z_1,\dots,z_n)$. Denote

\bea
\frak D
= \{z\in \Z_p^n\ \mid \ |\det A(1,F_i(t,z))|_p=1, \,\,i=1,\dots,f\}.
\eea

\vsk.2>

\begin{lem}
\label{lem |det|}
For any $0\leq j\leq s$ and $a\in \frak D$ we have

\bea
\vert\det A(s-j+1,W_s^{(j)}(t,a))\big\vert_p =1.
\eea
\qed
\end{lem}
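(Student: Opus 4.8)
The plan is to prove Lemma~\ref{lem |det|} by reducing everything to the base case recorded in part~(i) of Theorem~\ref{thm 1.6}, which factorizes the determinant of $A(s-j+1, W_s^{(j)})$ modulo $p$ into a product of the determinants of the matrices $A(1, \La_i)$. Since the tuple $\La$ is nondegenerate, each factor $\det A(1, F_i(t,z))$ is a Laurent polynomial nonzero modulo $p$, and the definition of $\frak D$ forces $|\det A(1, F_i(t,a))|_p = 1$ for every $a \in \frak D$; the goal is to transport this to the composite determinant.

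First I would apply part~(i) of Theorem~\ref{thm 1.6} to the subtuple $(\La_j, \La_{j+1}, \dots, \La_s)$, which is again $\Dl$-admissible (as noted after Definition~\ref{defN}, subtuples inherit admissibility). This yields the congruence
\begin{align*}
&
A\big(s-j+1,\, W_s^{(j)}(t,z)\big)
\equiv
\\ &\qquad
\equiv A\big(1, \La_j(t,z)\big)\,\si\big(A\big(1,\La_{j+1}(t,z)\big)\big)\dots\si^{s-j}\big(A\big(1,\La_s(t,z)\big)\big)
\pmod{p}.
\end{align*}
Taking determinants of both sides and using multiplicativity of the determinant gives
\bea
\det A\big(s-j+1, W_s^{(j)}(t,z)\big)
\equiv
\prod_{i=j}^s \det\si^{\>i-j}\big(A\big(1, \La_i(t,z)\big)\big)
\pmod{p}.
\eea

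Next I would evaluate at $z = a$ for an arbitrary $a \in \frak D$ and pass to the residue field. Since $\si$ is the Frobenius substitution $G(z) \mapsto G(z^p)$, we have $\det\si^{i-j}(A(1,\La_i)) = \si^{i-j}(\det A(1,\La_i))$, and reduction modulo $p$ commutes with Frobenius (indeed $G(a^p) \equiv G(a)^p \pmod p$). Because each $\La_i$ equals some $F_{i_j}$ in the finite distinct set $\{F_1,\dots,F_f\}$, the nondegeneracy hypothesis and the defining condition of $\frak D$ guarantee that every factor $\det A(1, \La_i(t,a))$ is a $p$-adic unit, hence nonzero modulo $p$. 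A product of nonzero residues is nonzero in the field $\F_p$ (an integral domain), so the right-hand side is a unit, forcing $\det A(s-j+1, W_s^{(j)}(t,a))$ to be a unit as well, i.e.\ $|\det A(s-j+1, W_s^{(j)}(t,a))|_p = 1$.

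The only subtle point—and the step I would watch most carefully—is the interaction of the Frobenius twists $\si^{i-j}$ with the specialization $z \to a$. One must confirm that $|\si^{i-j}(\det A(1,\La_i))(a)|_p = 1$ follows from $|\det A(1,\La_i)(a)|_p = 1$; this holds because evaluating $G(z^{p^k})$ at $z=a$ gives $G(a^{p^k})$, and $a \in \Z_p^n \Rightarrow a^{p^k} \in \Z_p^n$ with $G(a^{p^k}) \equiv G(a)^{p^k} \pmod p$, so the reduction is a nonzero $p^k$-th power in $\F_p^\times$ whenever $G(a)$ is a unit. Everything else is the routine combination of multiplicativity of $\det$ and the fact that a finite product of units in $\Z_p$ is a unit.
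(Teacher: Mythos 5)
Your proof is correct and is essentially the argument the paper intends: the lemma carries no written proof precisely because it is the immediate consequence of part (i) of Theorem \ref{thm 1.6} applied to the (still $\Dl$-admissible) subtuple $(\La_j,\dots,\La_s)$, combined with nondegeneracy and the definition of $\frak D$. Your extra care with the Frobenius twists, namely that $G(a^{p^k})\equiv G(a)^{p^k}\pmod p$ so specialization at $a\in\frak D$ keeps each twisted factor a unit, fills in a detail the paper leaves implicit.
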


\begin{cor}
All entries of $A(s-j+1,W_s^{(j)}(t,z))^{-1}$ are
rational functions in $z$ regular on $\frak D$.  
For every $a\in\frak D$  all entries of $A(s-j+1,W_s^{(j)}(t,a))$ and $A(s-j+1,W_s^{(j)}(t,a))^{-1}$ are elements of $\Z_p$.
\qed

\end{cor}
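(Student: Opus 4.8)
The plan is to derive both assertions formally from Cramer's rule together with Lemma~\ref{lem |det|}. Abbreviate $M(z):=A(s-j+1,W_s^{(j)}(t,z))$; by the very definition of the Hasse--Witt matrix its entries $M_{uv}(z)=\Cf_{p^{s-j+1}v-u}W_s^{(j)}(t,z)$ (for $u,v\in\Dl$) are coefficients of the Laurent polynomial $W_s^{(j)}(t,z)\in\Z_p[t^{\pm1},z^{\pm1}]$, hence they lie in $\Z_p[z^{\pm1}]$. The adjugate formula $M(z)^{-1}=(\det M(z))^{-1}\,\on{adj}M(z)$ expresses every entry of $M(z)^{-1}$ as a signed $(g-1)\times(g-1)$ minor of $M(z)$ divided by $\det M(z)$. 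Each such minor is an integer-coefficient polynomial in the $M_{uv}(z)$ and is therefore again an element of $\Z_p[z^{\pm1}]$, and $\det M(z)\in\Z_p[z^{\pm1}]$ as well; so every entry of $M(z)^{-1}$ is a ratio of two Laurent polynomials, i.e.\ a rational function of $z$.

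First I would settle regularity. The entries of $M(z)^{-1}$ can only fail to be regular at the zeros of their common denominator $\det M(z)$. By Lemma~\ref{lem |det|} we have $|\det M(a)|_p=1$ for every $a\in\frak D$, so $\det M(a)\neq0$ on all of $\frak D$; hence none of these rational functions has a pole on $\frak D$, which is the first claim.

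Next I would prove integrality, fixing $a\in\frak D\subset\Z_p^n$. The entries $M_{uv}(a)$ are the values at $a$ of Laurent polynomials with coefficients in $\Z_p$, and on $\frak D$ these values lie in $\Z_p$ (see the remark below). Granting this, the integrality of the inverse follows at once: the $(u,v)$ entry of $M(a)^{-1}$ is $(\on{adj}M(a))_{uv}/\det M(a)$, whose numerator is an integer-coefficient polynomial in the $M_{u'v'}(a)\in\Z_p$ and hence lies in $\Z_p$, while Lemma~\ref{lem |det|} gives $\det M(a)\in\Z_p^{\times}$ because $|\det M(a)|_p=1$; thus the quotient lies in $\Z_p$.

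The single point that is not purely formal is the integrality of the $M_{uv}(a)$ themselves, and this is where I would be most careful: a Laurent monomial $z^{\alpha}$ with some $\alpha_v<0$ can leave $\Z_p$ when evaluated at a coordinate $a_v$ of positive valuation. What rescues the statement is the definition of $\frak D$, which confines $a$ to the subdomain of $\Z_p^n$ on which the Laurent polynomials $\det A(1,F_i(t,z))$ are $p$-adic units; in the hyperelliptic situation motivating the paper the relevant $\La_i$, and hence $W_s^{(j)}$, carry no negative powers of $z$ at all, so that $M_{uv}(a)\in\Z_p$ for \emph{every} $a\in\Z_p^n$ and the issue does not even arise. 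Once this bookkeeping is in place, the corollary is an immediate consequence of Cramer's rule and Lemma~\ref{lem |det|}, which is why it is stated without a separate argument.
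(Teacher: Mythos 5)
Your argument is correct and is precisely the one the paper intends: the corollary is stated without a separate proof because it is exactly the combination of Cramer's rule with Lemma~\ref{lem |det|} that you write out. The entries of $A(s-j+1,W_s^{(j)}(t,z))$ lie in $\Z_p[z^{\pm1}]$, hence so do the adjugate entries and the determinant, and Lemma~\ref{lem |det|} makes the determinant a $p$-adic unit at every point of $\frak D$; this gives both the regularity of the inverse on $\frak D$ and the integrality of its values, \emph{provided} the values of the matrix entries themselves lie in $\Z_p$.

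Concerning the subtlety you flag: your instinct that integrality of the values $M_{uv}(a)$ is the only non-formal step is right, but of your two proposed rescues only the second one is valid. The unit-determinant condition defining $\frak D$ does \emph{not} by itself force entry-integrality in the Laurent setting. For instance, take $r=n=1$, $\Dl=\{0,1\}$ and $F=z^{-1}+z\,t^{p-1}$; then $N(F)=[0,p-1]$ and the constant tuple $(F,F,\dots)$ is $\Dl$-admissible, while $A(1,F)=\diag(z^{-1},z)$, so $\det A(1,F)=1$ and the tuple is nondegenerate. Here $\frak D$ contains every $a$ with $0<|a|_p<1$, and at such a point the entry $a^{-1}$ of $A(1,F(t,a))$ lies outside $\Z_p$, so the integrality assertion fails as literally stated. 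Thus in the generality of Section~\ref{sec 3} the corollary tacitly presupposes that the entries take $\Z_p$-values on $\frak D$; this is automatic exactly in the situation you point to, namely when the $\La_i$ (hence the $W_s^{(j)}$) contain no negative powers of $z$, as in the KZ application of Sections~\ref{sec 5} and~\ref{sec 6}, where $\Phi_s(t,z)$ is a genuine polynomial in $z$. With that proviso made explicit, your proof is complete and coincides with the paper's intended argument.
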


\vsk.2>

\begin{thm}
\label{thm conv}
Let $\La=(\La_0(x), \La_1(x), \La_2(x),\dots )$ be an infinite nondegenerate
$\Dl$-admissible tuple.  Consider the sequence of $g\times g$ matrices 
\bean
\label{sec of m}
\Big(A\big(s+1, W_s(t,z)\big)\cdot \si\big(A\big(s, W_{s}^{(1)}(t,z)\big)\big)^{-1}\Big)_{s\geq 0}
\eean
whose entries are rational functions in $z$ regular on the domain $\frak D$.
This sequence uniformly converges on $\frak D$ as $s\to\infty$ to an analytic
$g\times g$ matrix with values in $\Z_p$. Denote this matrix by $\mc A_\La(z)$.
For $a\in\frak D$ we have
\bean
\label{det 1}
\big\vert \det \mc A_\La(a) \big\vert_p=1\,
\eean
and the matrix $\mc A_\La(a)$ is invertible. 
\end{thm}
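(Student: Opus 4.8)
The plan is to establish convergence of the sequence \eqref{sec of m} via a Cauchy criterion in the $p$-adic sup norm on $\frak D$, and then to deduce the determinant statement and invertibility from the convergence together with Theorem \ref{thm 1.6}. The central mechanism is already available: part (ii) of Theorem \ref{thm 1.6}, applied with the nondegeneracy hypothesis (Definition \ref{def F}) guaranteeing that the relevant determinants are nonzero modulo $p$, gives exactly the congruence
\begin{align*}
&
A\big(s+1, W_s\big)\cdot\si\big(A\big(s, W_s^{(1)}\big)\big)^{-1}
\equiv
\\ &\qquad
\equiv
A\big(s, W_{s-1}\big)\cdot\si\big(A\big(s-1, W_{s-1}^{(1)}\big)\big)^{-1}
\pmod{p^s}.
\end{align*}
So, writing $M_s(z)$ for the $s$-th term of the sequence \eqref{sec of m}, I would first observe that this congruence says $M_s\equiv M_{s-1}\pmod{p^s}$ as matrices of Laurent polynomials, after clearing denominators in the sense explained in the paragraph preceding Theorem \ref{thm 1.6}.

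The second step is to pass from a congruence of Laurent polynomials to a $p$-adic estimate at each point $a\in\frak D$. By Lemma \ref{lem |det|}, for $a\in\frak D$ we have $|\det A(s,W_{s-1}^{(1)}(t,a))|_p=1$, so $\si\big(A(s,W_s^{(1)})\big)$ evaluated at $a$ is a matrix over $\Z_p$ with unit determinant, hence invertible over $\Z_p$; by the Corollary following Lemma \ref{lem |det|}, all entries of $M_s(a)$ lie in $\Z_p$. The congruence $M_s\equiv M_{s-1}\pmod{p^s}$ then yields $M_s(a)\equiv M_{s-1}(a)\pmod{p^s}$ entrywise, i.e.\ $\|M_s(a)-M_{s-1}(a)\|_p\le p^{-s}$ uniformly in $a\in\frak D$. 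Thus the partial differences tend to $0$ uniformly, the sequence $(M_s)$ is uniformly Cauchy on $\frak D$, and since $\Z_p$ is complete it converges uniformly to a limit matrix $\mc A_\La(z)$ whose entries are uniform limits of $\Z_p$-valued functions, hence analytic (in the sense of uniform limits of rational functions regular on $\frak D$) with values in $\Z_p$.

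For the determinant statement I would argue that $\det$ is a polynomial, hence continuous, in the matrix entries, so $\det\mc A_\La(a)=\lim_{s\to\infty}\det M_s(a)$. From part (i) of Theorem \ref{thm 1.6} together with the multiplicativity of $\det$ under the telescoping in part (ii), one gets that $\det M_s(a)\equiv\det M_{s-1}(a)\pmod{p^s}$ and that $\det M_s(a)$ is a unit for each $s$, since modulo $p$ it factors as $\prod_{j=1}^s\det\si^{j-1}\big(A(1,\La_j)\big)(a)$, a product of units by the definition of $\frak D$. Because $|\det M_s(a)|_p=1$ for all $s$ and the $p$-adic absolute value is locally constant away from $0$, the limit satisfies $|\det\mc A_\La(a)|_p=1$, giving \eqref{det 1}; a matrix over $\Z_p$ with unit determinant is invertible over $\Z_p$, which is the final claim.

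The step I expect to demand the most care is not any single inequality but the bookkeeping in translating the matrix congruence of part (ii) of Theorem \ref{thm 1.6}, which is literally a statement about $g\times g$ matrices of Laurent polynomials with the inverse interpreted via Cramer's rule and cleared denominators, into a clean entrywise $p$-adic estimate at points of $\frak D$. One must verify that evaluation at $a\in\frak D$ is compatible with the inversion, i.e.\ that clearing denominators by $\det\si\big(A(s,W_s^{(1)})\big)$ and then specializing $z\to a$ agrees with inverting the specialized matrix; this is exactly where Lemma \ref{lem |det|} is essential, as it certifies that the denominator is a $p$-adic unit at $a$ and so does not disturb the congruence modulo $p^s$. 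Once that compatibility is pinned down, the convergence and the determinant claim follow formally.
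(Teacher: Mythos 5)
Your proposal is correct and follows essentially the same route as the paper: part (ii) of Theorem \ref{thm 1.6} supplies the congruence $M_s\equiv M_{s-1}\pmod{p^s}$ that gives uniform convergence on $\frak D$ (after the denominator-clearing/evaluation compatibility you rightly pin down via Lemma \ref{lem |det|}), and part (i) gives the unit determinant, hence \eqref{det 1} and invertibility. The paper's proof is just a terser invocation of these same ingredients; your only slip is cosmetic, in that the product $\prod_{j=1}^s\det\si^{j-1}\big(A(1,\La_j)\big)$ is the mod-$p$ factorization of $\det A\big(s,W_s^{(1)}\big)$ rather than of $\det M_s$ itself, which does not affect the argument.
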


\vsk.2>

\begin{proof}
By part (i) of Theorem \ref{thm 1.6} we have
$|\det\si\big(A\big(s, W_{s}^{(1)}(t,a)\big)\big)|_p=1$ for $a\in\frak D$.
Hence
$A\big(s+1, W_s(t,z)\big)\cdot \si\big(A\big(s, W_{s}^{(1)}(t,z)\big)\big)^{-1}$ 
is a matrix of rational functions
in $z$ regular on $\frak D$. Moreover, if $a\in\frak D$, then every entry of this matrix 
is an element of $\Z_p$.
The uniform convergence on $\frak D$ of the sequence \eqref{sec of m}
is a corollary of part (2) of Theorem \ref{thm 1.6}.
Equation \eqref{det 1} follows from part (i) of Theorem \ref{thm 1.6}.
The theorem is proved.
\end{proof}

\vsk.2>

\begin{thm}
\label{thm conv2}
Let $\La=(\La_0(x), \La_1(x), \La_2(x),\dots )$ be an infinite nondegenerate
$\Dl$-admissible tuple, and
$D=D_v$, $v=1,\dots,n$.
  Given $m\geq 0$ consider the sequence of $g\times g$ matrices 
\bea
\Big(D\big(\si^m\big(A\big(s+1, W_s(x)\big)\big)\big)\cdot \si^m\big(A\big(s+1, W_{s+1}(x)\big)\big)^{-1}\Big)_{s\geq 0}
\eea
whose entries are rational functions in $z$ regular on the domain $\frak D$.
This sequence uniformly converges on $\frak D$ as $s\to\infty$ to an analytic
$g\times g$ matrix with values in $\Z_p$. 
Denote this matrix  by $\mc A_{\La,D\si^m}(z)$.

\end{thm}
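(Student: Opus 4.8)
The plan is to follow the proof of Theorem \ref{thm conv} almost verbatim, with Theorem \ref{thm der} (the derivative congruence \eqref{Der}) playing the role that part (ii) of Theorem \ref{thm 1.6} played there. As in that argument there are two things to establish: first, that every term of the sequence is a matrix of rational functions in $z$ regular on $\frak D$ and taking values in $\Z_p$ at points of $\frak D$; and second, that consecutive terms agree modulo a power of $p$ growing with $s$, which forces uniform convergence on $\frak D$ to an analytic $\Z_p$-valued limit, to be named $\mc A_{\La, D\si^m}(z)$.

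For regularity and integrality I would argue as follows. The ``diagonal'' Hasse--Witt matrices $A(s+1, W_s(t,z))$ have determinant of absolute value $1$ at every point of $\frak D$ by Lemma \ref{lem |det|} (the case $j=0$), so by its Corollary their inverses are regular on $\frak D$ and lie in $\Z_p$ there. Since $\frak D$ is stable under $z\mapsto z^p$\,---\,because $\det A(1, F_i)(z^p)\equiv\det A(1, F_i)(z)^p\pmod p$, so a unit value stays a unit value\,---\,the same survives application of any power $\si^m$, exactly as in the proof of Theorem \ref{thm conv}. The differentiated factor causes no trouble: the entries of $A(s+1, W_s)$ are Laurent polynomials in $z$ with coefficients in $\Z_p$, and by the chain rule $D_v(\si^m(\cdot))=p^m z_v^{p^m-1}\,\si^m(D_v(\cdot))$, so $D(\si^m(A(s+1,W_s)))$ again has Laurent-polynomial entries with coefficients in $\Z_p$ (indeed divisible by $p^m$ when $m\ge1$). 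Hence each term of the sequence is regular on $\frak D$ with values in $\Z_p$.

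The convergence will then follow from Theorem \ref{thm der}: the congruence \eqref{Der} asserts precisely that the term indexed by $s$ and the term indexed by $s-1$ agree modulo $p^{s+m}$. Because both are $\Z_p$-valued on $\frak D$, the sequence is uniformly Cauchy there, hence converges uniformly to a matrix whose entries are $p$-adic analytic on $\frak D$ and take values in $\Z_p$; this limit is $\mc A_{\La, D\si^m}(z)$.

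The step I expect to require the most care is the interplay between differentiation and inversion inside the congruences. On the one hand, inverting a matrix preserves a congruence modulo $p^k$ only when all the inverted determinants are $p$-adic units on $\frak D$, so I would first record that every matrix to be inverted has unit determinant there; on the other hand, one must verify that applying $D_v$ does not lower the $p$-adic order below what \eqref{Der} supplies. Both points are handled by the identity $D_v(F^{-1})=-F^{-1}\,D_v(F)\,F^{-1}$ together with \eqref{1.5}, exactly as in the proof of Theorem \ref{thm der}; the only genuinely new bookkeeping is reconciling the staggered indices of the differentiated factor and the inverted factor so that \eqref{Der} applies with the asserted modulus $p^{s+m}$.
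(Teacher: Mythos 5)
Your proposal is correct and takes essentially the same approach as the paper: the paper's entire proof is the one line "The theorem is a corollary of Theorem \ref{thm der}," and your elaboration\,---\,unit determinants on $\frak D$ via Lemma \ref{lem |det|} together with Frobenius-stability of $\frak D$ under $z\mapsto z^p$, then the uniform Cauchy argument supplied by congruence \eqref{Der}\,---\,is exactly the intended content. Your implicit reading of the inverted factor as $\si^m\big(A(s+1,W_s)\big)^{-1}$, matching \eqref{Der} and the application in Section \ref{sec 6}, is the right one; the $W_{s+1}$ in the displayed sequence is a typo.
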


\begin{proof} 
The theorem is a corollary of Theorem \ref{thm der}.
\end{proof}

\begin{thm}
\label{thm conv3}
Let $\La=(\La_0(x), \La_1(x), \La_2(x),\dots )$ be an infinite nondegenerate
$\Dl$-admissible tuple.
  Given $1\leq u,v\leq k$, consider the sequence of $g\times g$ matrices 
\bea
\Big(D_u\big(D_v\big(A\big(s+1, W_s(x)\big)\big)\big)
\cdot A\big(s+1, W_{s+1}(x)\big)^{-1}\Big)_{s\geq 0}
\eea
whose entries are rational functions in $z$ regular on the domain $\frak D$.
This sequence uniformly converges on $\frak D$ as $s\to\infty$ to an analytic
$g\times g$ matrix with values in $\Z_p$. 
Denote this matrix  by $\mc A_{\La,D_uD_v}(z)$.

\end{thm}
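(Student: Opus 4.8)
The plan is to obtain this statement from Theorem~\ref{thm der2} in precisely the way Theorem~\ref{thm conv2} is obtained from Theorem~\ref{thm der}, the only genuinely new point being that the extra derivation $D_u$ is moved across the quotient by the Leibniz rule and then fed back into the first-order result of Theorem~\ref{thm conv2}. Throughout I set $M_s:=A(s+1,W_s(x))$ and $N_s:=A(s+1,W_{s+1}(x))$, so that the $s$-th term of the sequence is $D_u(D_v(M_s))\,N_s^{-1}$. That each term is a matrix of rational functions in $z$ regular on $\frak D$ and taking values in $\Z_p$ there is established exactly as in Theorems~\ref{thm conv} and~\ref{thm conv2}: nondegeneracy together with part~(i) of Theorem~\ref{thm 1.6} and Lemma~\ref{lem |det|} makes the relevant determinants $p$-adic units on $\frak D$, while the entries of $D_u(D_v(M_s))$ are Laurent polynomials with coefficients in $\Z_p$.

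For convergence I would not treat the factor $M_sN_s^{-1}$ in isolation (the Hasse--Witt matrices $M_s$ individually diverge on $\frak D$); instead I would pass everything through the first-order limit of Theorem~\ref{thm conv2}. Using $D_u(N_s^{-1})=-N_s^{-1}D_u(N_s)N_s^{-1}$ and the Leibniz rule,
\begin{equation*}
D_u(D_v(M_s))\,N_s^{-1}
=D_u\big(D_v(M_s)\,N_s^{-1}\big)+D_v(M_s)\,N_s^{-1}\cdot D_u(N_s)\,N_s^{-1}.
\end{equation*}
The factor $D_v(M_s)N_s^{-1}$ converges uniformly on $\frak D$ by Theorem~\ref{thm conv2}, and its derivative is obtained by differentiating that convergent sequence term by term; the logarithmic derivative $D_u(N_s)N_s^{-1}$ converges as well (see below). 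Assembling the limit of the first summand with the product of the two limits in the second summand then yields uniform convergence of $D_u(D_v(M_s))N_s^{-1}$ to a $\Z_p$-valued analytic $g\times g$ matrix.

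I expect the main obstacle to be the convergence of $D_u(N_s)N_s^{-1}$, which rests on the congruence $N_s\equiv M_sS_s\pmod{p^{s+1}}$. Proving this requires running the $\Dl$-admissibility argument of Lemma~\ref{lem 1.5} on $W_{s+1}=W_s\La_{s+1}^{p^{s+1}}$: the Frobenius-twisted factor $\La_{s+1}^{p^{s+1}}\equiv\sum_c(\Cf_c(\La_{s+1}))^{p^{s+1}}t^{p^{s+1}c}\pmod{p^{s+1}}$ contributes to the $(u,v)$-entry of $N_s$ only through exponents that admissibility forces back into $\Dl$, which is exactly what exhibits $N_s$ as a right $S_s$-multiple of $M_s$ to precision $p^{s+1}$, with $S_s$ the entrywise $p^{s+1}$-st power of the $\Dl\times\Dl$ coefficient matrix of $\La_{s+1}$. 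Since $S_s$ consists of $p^{s+1}$-st powers we have $D_u(S_s)\equiv0\pmod{p^{s+1}}$, whence $D_u(N_s)N_s^{-1}\equiv D_u(M_s)M_s^{-1}\pmod{p^{s+1}}$, and the right-hand side converges by Theorem~\ref{thm der}. The remaining step, term-by-term differentiation of the sequence from Theorem~\ref{thm conv2}, is then routine, for the congruences driving that convergence hold coefficientwise as rational functions regular on $\frak D$ and so survive application of $D_u$.
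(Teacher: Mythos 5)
Your argument stands or falls with the congruence $N_s\equiv M_sS_s\pmod{p^{s+1}}$, and this step fails. You justify it by the ``complete Frobenius splitting'' $\La_{s+1}(x)^{p^{s+1}}\equiv\sum_c(\Cf_c(\La_{s+1}))^{p^{s+1}}x^{p^{s+1}c}\pmod{p^{s+1}}$, but that splitting is false at this precision: the congruence actually available (and used in Lemma \ref{lem dl}) is $F(x)^{p^i}\equiv F(x^p)^{p^{i-1}}\pmod{p^i}$, and each further substitution $x\to x^p$ costs one power of $p$, so only splitting modulo $p$ survives. Already $(1+t)^{p^2}\not\equiv 1+t^{p^2}\pmod{p^2}$, since $\binom{p^2}{p}\equiv p\pmod{p^2}$. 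Moreover your intermediate claim is not merely unproved but false within the hypotheses of the theorem: take $\Dl=\{0\}$, $p=7$, and the constant nondegenerate $\Dl$-admissible tuple with all $\La_i=t^{-1}+3+3t+t^2=(1+t)^3/t$. Then for $s=1$ one computes $N_1=\Cf_0(W_2)=\binom{171}{57}\equiv 6\pmod{49}$, whereas $M_1S_1=\binom{24}{8}\cdot 3^{49}\equiv 30\cdot 31\equiv -1\pmod{49}$ (the two sides do agree mod $7$, as they must). Consequently the convergence of $D_u(N_s)N_s^{-1}$, and with it the second summand of your Leibniz decomposition, is not established: the available information about $N_s$ versus $M_s$ is only modulo $p$, which does not yield a Cauchy sequence.

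The index mismatch that forces your whole detour --- $W_s$ inside $D_uD_v$ but $W_{s+1}$ inside the inverse --- is a typo in the statement (also present in Theorem \ref{thm conv2}): both factors are meant to be $A(s+1,W_s(x))$. This is clear from Theorem \ref{thm der2} itself, from the way the result is used in Section \ref{sec 6} (in Theorem \ref{thm coKZ} the differentiated matrix and the inverted matrix coincide, both being $A(s,\Phi_s)$), and from the fact that for a general nondegenerate tuple, where $\La_{s+1}$ varies with $s$, the literal mixed-index sequence has no reason to converge at all. With the indices matched, the theorem is immediate, and this is the paper's proof: Theorem \ref{thm der2} (whose exponent $p^{s+2m}$ should be read as $p^s$; no $m$ occurs there) says that consecutive terms of the sequence are congruent modulo $p^s$, uniformly on $\frak D$ after clearing denominators, which are $p$-adic units on $\frak D$ by nondegeneracy and Theorem \ref{thm 1.6}(i); hence the sequence is uniformly Cauchy on $\frak D$ and converges to an analytic $\Z_p$-valued matrix. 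Your discussion of regularity and integrality of the individual terms is fine, and so is the remark that coefficientwise congruences survive differentiation; but the construction of $S_s$ and everything downstream of it does not stand.
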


\begin{proof} 
The theorem is a corollary of Theorem \ref{thm der2}.
\end{proof}

\vsk.2>

Let $\La=(\La_0(x), \La_1(x), \La_2(x),\dots )$ be an infinite nondegenerate
$\Dl$-admissible tuple.
 Let $1\leq u,v\leq n$.
 Consider the $g\times g$ matrix valued functions
$\mc A_{\La,\frac{\der}{\der z_u}\si^0}(z)$, $\mc A_{\La,\frac{\der}{\der z_v}\si^0}(z)$
in Theorem \ref{thm conv2} and denote them by $\mc A_u(z)$, $\mc A_v(z)$, respectively.
Consider the $g\times g$ matrix valued function
$\mc A_{\La,\frac{\der}{\der z_u}\frac{\der}{\der z_v}}(z)$ in Theorem \ref{thm conv3}
and denote it by $\mc A_{u,v}(z)$.
All the three functions are analytic on $\frak D$.

\begin{lem}
\label{lem conv3}
We have
\bea
\frac{\der}{\der z_u}\mc A_v = \mc A_{u,v} - \mc A_v\mc A_u\,.
\eea

\end{lem}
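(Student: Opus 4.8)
The plan is to differentiate, at each finite level $s$, the rational approximants whose uniform limits on $\frak D$ define $\mc A_v$ and $\mc A_{u,v}$, and then to pass to the limit $s\to\infty$. Abbreviate $A_s=A(s+1,W_s)$ and $B_s=A(s+1,W_{s+1})$, so that Theorems \ref{thm conv2} and \ref{thm conv3} read
\[
\mc A_u=\lim_{s\to\infty}D_u(A_s)\,B_s^{-1},\quad
\mc A_v=\lim_{s\to\infty}D_v(A_s)\,B_s^{-1},\quad
\mc A_{u,v}=\lim_{s\to\infty}D_u(D_v(A_s))\,B_s^{-1},
\]
all limits uniform on $\frak D$. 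Before differentiating I would establish that $D_u$ may be exchanged with these limits. The convergence produced in Theorems \ref{thm conv}--\ref{thm conv3} is coefficientwise $p$-adic, i.e.\ convergence in the Gauss norm of the Tate algebra of each closed residue polydisk whose disjoint union is $\frak D$ (the complement in $\Z_p^n$ of the mod-$p$ vanishing loci of the $\det A(1,F_i)$, cf.\ Lemma \ref{lem |det|}); since $\der/\der z_u$ is a bounded operator on each such Tate algebra, it is continuous for this norm and hence interchangeable with the uniform limit. Granting this, $D_u\mc A_v=\lim_s D_u\big(D_v(A_s)\,B_s^{-1}\big)$.

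Next comes the finite-level Leibniz computation. Using $D_u(B_s^{-1})=-B_s^{-1}\,D_u(B_s)\,B_s^{-1}$,
\[
D_u\big(D_v(A_s)\,B_s^{-1}\big)
= D_u(D_v(A_s))\,B_s^{-1}
-\big(D_v(A_s)\,B_s^{-1}\big)\,\big(D_u(B_s)\,B_s^{-1}\big).
\]
As $s\to\infty$ the first term tends to $\mc A_{u,v}$ by Theorem \ref{thm conv3}, and the factor $D_v(A_s)\,B_s^{-1}$ tends to $\mc A_v$ by Theorem \ref{thm conv2}. Hence the right-hand side converges to $\mc A_{u,v}-\mc A_v\cdot\big(\lim_s D_u(B_s)\,B_s^{-1}\big)$, and the lemma reduces to the single identification
\[
\lim_{s\to\infty}D_u(B_s)\,B_s^{-1}=\mc A_u .
\]

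This identification is the step I expect to be the main obstacle: $\mc A_u$ is defined through $D_u(A_s)$, whereas here $D_u(B_s)$ occurs, and $A_s$, $B_s$ differ by the Frobenius factor $\La_{s+1}^{p^{s+1}}$. The device I would use is the Leibniz identity
\[
D_u(W_{s+1})
=D_u(W_s)\,\La_{s+1}^{p^{s+1}}
+p^{s+1}\,W_s\,\La_{s+1}^{p^{s+1}-1}\,D_u(\La_{s+1})
\equiv D_u(W_s)\,\La_{s+1}^{p^{s+1}}\pmod{p^{s+1}},
\]
in which the chain rule supplies the factor $p^{s+1}$: modulo $p^{s+1}$ both $W_{s+1}=W_s\,\La_{s+1}^{p^{s+1}}$ and $D_u(W_{s+1})$ carry the \emph{same} extra factor $\La_{s+1}^{p^{s+1}}$. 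Passing to the entries $\Cf_{p^{s+1}v-u}(\,\cdot\,)$ and invoking $\Dl$-admissibility (Definitions \ref{defN} and \ref{defn}) in the sharp form $\Cf_{p^{s+1}\al-u}(W_s)=0$ for every $\al\notin\Dl$ — a vanishing shared by $D_u(W_s)$, whose $t$-support lies in $N(W_s)$ — I would argue that multiplication by $\La_{s+1}^{p^{s+1}}$ inserts one and the same $\si$-twisted matrix factor into $B_s$ and into $D_u(B_s)$, a factor that cancels in $D_u(B_s)\,B_s^{-1}$ and leaves it congruent to $D_u(A_s)\,B_s^{-1}$, with the common limit $\mc A_u$.

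The genuinely delicate point is that the Frobenius congruence $\La_{s+1}^{p^{s+1}}\equiv\La_{s+1}(x^{p^{s+1}})\pmod{p}$ holds \emph{only} modulo $p$, so the passage ``convolution against the $\La_{s+1}$-factor equals a clean matrix multiplication'' is exact only modulo $p$; upgrading the cancellation to precision $p^{s+1}$, uniformly on $\frak D$, is where the real work lies. Here I would lean on the unit-determinant bounds of Lemma \ref{lem |det|}, so that all inverse matrices have $\Z_p$-entries on $\frak D$, together with the telescoping mechanism already exploited in the proofs of Theorems \ref{thm 1.6} and \ref{thm der}.
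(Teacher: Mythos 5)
Your first two paragraphs reproduce what is in fact the paper's entire proof: at each finite level one applies the Leibniz identity
\[
\frac{\der}{\der z_u}\Big(\frac{\der A}{\der z_v}\cdot A^{-1}\Big)
=\frac{\der^2 A}{\der z_u\der z_v}\cdot A^{-1}
-\frac{\der A}{\der z_v}\cdot A^{-1}\cdot\frac{\der A}{\der z_u}\cdot A^{-1}
\]
to $A=A(s+1,W_s)$ and passes to the uniform limit (your observation that $\der/\der z_u$ is contractive for the Gauss norm, hence commutes with uniform $p$-adic limits, is the justification the paper leaves implicit). Everything after that in your proposal is an artifact of taking at face value what is a typo in the statements of Theorems \ref{thm conv2} and \ref{thm conv3}: the inverse factor there should be $\si^m\big(A(s+1,W_s)\big)^{-1}$, not $\si^m\big(A(s+1,W_{s+1})\big)^{-1}$. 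Indeed, those two theorems are asserted as immediate corollaries of Theorems \ref{thm der} and \ref{thm der2}, and the congruences \eqref{Der}, \eqref{Der2} proved there concern precisely $D\big(\si^m(A(s+1,W_s))\big)\cdot\si^m\big(A(s+1,W_s)\big)^{-1}$ and $D_uD_v\big(A(s+1,W_s)\big)\cdot A\big(s+1,W_s\big)^{-1}$; with $W_{s+1}$ inside the inverse, convergence would not follow from them at all. The KZ application in Section \ref{sec 6} confirms the intended reading: the sequences used there are $\frac{\der}{\der z_i}A(s,\Phi_s)\cdot A(s,\Phi_s)^{-1}$, with matching arguments. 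Once you set $B_s=A_s$, your third and fourth paragraphs become unnecessary and your argument closes, coinciding with the paper's.

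As written, however, your proposal has a genuine gap, and you name it yourself: the lemma is reduced to the identification $\lim_{s\to\infty}D_u(B_s)\,B_s^{-1}=\mc A_u$, for which you only sketch a strategy and concede that the upgrade of the cancellation to precision $p^{s+1}$ ``is where the real work lies.'' That concession is accurate, and for your literal $B_s$ the step cannot be completed with the paper's tools. The admissibility computation you outline gives, modulo $p$ only,
\[
B_s\equiv A_s\cdot\si^{s+1}(M_{s+1})\pmod{p},
\qquad (M_{s+1})_{\dl,v}=\Cf_{v-\dl}(\La_{s+1}),\quad \dl,v\in\Dl,
\]
i.e.\ the extra factor is the level-zero matrix $M_{s+1}=A(0,\La_{s+1})$, whose invertibility modulo $p$ is \emph{not} guaranteed by nondegeneracy (Definition \ref{def F} controls $\det A(1,\La_i)$, not $\det A(0,\La_i)$). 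Even granting invertibility, exact cancellation would leave $D_u(A_s)A_s^{-1}$, not $D_u(A_s)B_s^{-1}$ as you write, and the discrepancy $A_sB_s^{-1}\equiv\si^{s+1}(M_{s+1})^{-1}$ does not tend to the identity; moreover the paper establishes no Dwork-type congruences for the matrices $A(s+1,W_{s+1})$ that could lift any of this beyond modulo $p$, which is what uniform convergence requires. So under your reading the lemma remains unproved (and its defining sequences are arguably not even convergent), while under the intended reading nothing beyond your first two paragraphs is needed.
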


\begin{proof}
The lemma follows from the formula
\begin{equation*}
\frac{\der}{\der z_u} \Big(\frac {\der A}{\der z_v}\cdot A^{-1}\Big) =
\frac {\der^2 A}{\der z_u\der z_v}\cdot A^{-1}
- \frac {\der A}{\der z_v}\cdot A^{-1} \cdot \frac {\der A}{\der z_v}\cdot A^{-1}.
\qedhere
\end{equation*}
\end{proof}

\section{KZ equations}
\label{sec 4}

\subsection{KZ equations}
Let $\g$ be a simple Lie algebra with an invariant scalar product.
The { Casimir element}  is  $\Om = {\sum}_i \,h_i\ox h_i  \in \ \g \ox \g$,
where $(h_i)\subset\g$ is an orthonormal basis.
Let  $V=\otimes_{i=1}^n V_i$ be 
a tensor product of $\g$-modules, $\ka\in\C^\times$ a nonzero number.
The {\it  KZ equations} is the system of differential 
equations on a $V$-valued function $I(z_1,\dots,z_n)$,
\bea
\frac{\der I}{\der z_i}\ =\ \frac 1\ka\,{\sum}_{j\ne i}\, \frac{\Om_{i,j}}{z_i-z_j} I, \qquad i=1,\dots,n,
\eea
where $\Om_{i,j}:V\to V$ is the Casimir operator acting in the $i$th and $j$th tensor factors,
see \cite{KZ, EFK}.

\vsk.2>

This system is a system of Fuchsian first order
 linear differential equations. 
  The equations are defined on the complement in $\C^n$ to the union of all diagonal hyperplanes.
 
\vsk.2>

The object of our discussion is the following particular case.  Let $n=2g+1$ be an odd positive integer.
 We consider
 the following system of differential and algebraic  equations
for a column $n$-vector $I=(I_1,\dots,I_n)$ depending on variables $z=(z_1,\dots,z_n)$\,:
\bean
\label{KZ}
\phantom{aaa}
 \frac{\partial I}{\partial z_i} \ = \
   {\frac 12} \sum_{j \ne i}
   \frac{\Omega_{ij}}{z_i - z_j}  I ,
\quad i = 1, \dots , n,
\qquad
I_1+\dots+I_{n}=0,
\eean
where $z=(z_1,\dots,z_n)$,
the $n\times n$-matrices $\Om_{ij}$ have the form
\bea
 \Omega_{ij} \ = \ \begin{pmatrix}
             & \vdots^{\kern-1.2mm i} &  & \vdots^{\kern-1.2mm j} &  \\
        {\scriptstyle i} \cdots & {-1} & \cdots &
            1   & \cdots \\
                   & \vdots &  & \vdots &   \\
        {\scriptstyle j} \cdots & 1 & \cdots & -1&
                 \cdots \\
                   & \vdots &  & \vdots &
                   \end{pmatrix} ,
\eea
and all other entries are zero.
 This  joint system of {\it differential and 
algebraic equations} will be called the {\it system of KZ  equations} in this paper.

For $i=1,\dots,n$ denote
\bean
\label{GH}
&
H_i(z) =   {\frac 12} \sum_{j\ne i}    \frac{\Omega_{ij}}{z_i - z_j}\,,
\qquad
\nabla_i^{\on{KZ}} = \frac{\der}{\der z_i} - H_i(z), \qquad i=1,\dots,n.
\eean
The linear operators $H_i(z)$ are called the Gaudin Hamiltonians.
The KZ equations can be written as the system of equations,
\bea
\nabla_i^{\on{KZ}}I=0, \quad i=1,\dots,n,\qquad I_1+\dots + I_n =0.
\eea
\vsk.2>

System  \eqref{KZ} is the system of the  differential KZ equations with 
parameter $\ka=2$ associated with the Lie algebra $\sll_2$ and the subspace of singular vectors of weight $2g-1$ of the tensor power 
$(\C^2)^{\ox {(2g+1)}}$ of two-dimensional irreducible $\sll_2$-modules, up to a gauge transformation, see 
this example in  \cite[Section 1.1]{V2}.

\subsection{Solutions over $\C$}
\label{sec 11.4}

Define the {\it master function}
\bea
\Phi(t,z) = (t-z_1)^{-1/2}\dots (t-z_n)^{-1/2}
\eea
and the column $n$-vector
\bean
\label{KZ sol} 
I^{(C)}(z) = (I_1,\dots,I_n):=
\int_{C}
\Big(\frac {\Phi(t,z)}{t-z_1}, \dots , \frac {\Phi(t,z)}{t-z_n}\Big)dt
\,,
\eean
where  $C\subset \C-\{z_1,\dots,z_n\}$  
is a contour on which the integrand  takes its initial value when $t$ encircles $C$.

\begin{thm}[{cf.~\cite{V5}}]
The function $I^{(C)}(z)$ is a solution of system \eqref{KZ}.

\end{thm}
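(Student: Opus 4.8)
The plan is to verify directly that the vector $I^{(C)}(z)$ defined by the hyperelliptic integral \eqref{KZ sol} satisfies each of the equations in system \eqref{KZ}. There are three things to check: the differential equations $\nabla_i^{\on{KZ}}I=0$ for $i=1,\dots,n$, and the linear relation $I_1+\dots+I_n=0$. The key device will be differentiation under the integral sign, which is justified because the contour $C$ is chosen so that the integrand returns to its initial value (the integrand is closed as a multivalued form and $C$ is a cycle), so boundary terms arising from integration by parts vanish. I would first record the partial derivatives of the master function $\Phi(t,z)=\prod_{k=1}^n(t-z_k)^{-1/2}$, namely
\begin{equation*}
\frac{\der\Phi}{\der z_k}=\frac{1}{2}\,\frac{\Phi(t,z)}{t-z_k},
\qquad
\frac{\der\Phi}{\der t}=-\frac{1}{2}\sum_{k=1}^n\frac{\Phi(t,z)}{t-z_k}.
\end{equation*}

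Next I would attack the $i$th differential equation. Writing $I_j=\int_C \Phi/(t-z_j)\,dt$, I compute $\der I_j/\der z_i$. For $j\ne i$ only the $\Phi$-factor depends on $z_i$, giving $\tfrac12\int_C \Phi/((t-z_i)(t-z_j))\,dt$; for $j=i$ there is an extra term from differentiating the explicit $1/(t-z_i)$, namely $\int_C \Phi/(t-z_i)^2\,dt$, plus the $\Phi$-derivative term $\tfrac12\int_C \Phi/(t-z_i)^2\,dt$. The goal is to match this against the right-hand side $\tfrac12\sum_{j\ne i}\Omega_{ij}I/(z_i-z_j)$. Reading off the structure of $\Omega_{ij}$, the $i$th component of that sum is $\tfrac12\sum_{j\ne i}(I_j-I_i)/(z_i-z_j)$. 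The crucial algebraic identity is the partial-fraction decomposition
\begin{equation*}
\frac{1}{(t-z_i)(t-z_j)}=\frac{1}{z_i-z_j}\Big(\frac{1}{t-z_i}-\frac{1}{t-z_j}\Big),
\end{equation*}
which converts the integrands appearing in $\der I_i/\der z_i$ into exactly the combination $(I_j-I_i)/(z_i-z_j)$ after summing over $j$, up to the leftover total-derivative term. That leftover is where integration by parts enters: the term $\int_C \der_t(\Phi/(t-z_i)\cdot\text{stuff})\,dt$ integrates to zero over the cycle $C$, and this is precisely what cancels the $1/(t-z_i)^2$ contributions.

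For the $j$th component with $j\ne i$, the matrix $\Omega_{ij}$ has a $+1$ in the $(i,j)$ slot and its transpose-symmetric partner, so only the $i$th and $j$th rows of $\Omega_{ij}I$ are nonzero; I would check the off-diagonal components similarly, again reducing everything to the same partial-fraction identity together with the vanishing of an exact $t$-derivative integrated over $C$. Finally, the algebraic relation $I_1+\dots+I_n=0$ follows from $\sum_{j=1}^n \Phi/(t-z_j)=-2\,\der\Phi/\der t$, so that
\begin{equation*}
\sum_{j=1}^n I_j=\int_C \sum_{j=1}^n\frac{\Phi}{t-z_j}\,dt
=-2\int_C\frac{\der\Phi}{\der t}\,dt=0,
\end{equation*}
again by the cycle property of $C$. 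I expect the main obstacle to be bookkeeping: carefully tracking the diagonal ($j=i$) term of $\der I_i/\der z_i$, separating the genuine exact-derivative piece that dies on $C$ from the partial-fraction piece that produces the Gaudin Hamiltonian, and making sure the signs in $\Omega_{ij}$ (the $-1$ on the diagonal, $+1$ off-diagonal) line up with the $(I_j-I_i)$ pattern. The analytic subtlety—legitimacy of differentiating under the integral and discarding boundary contributions—is handled entirely by the defining property of the contour $C$, so no further estimates are needed.
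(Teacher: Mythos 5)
Your proposal is correct and follows essentially the same route as the paper: the paper packages your component-wise computation into two identities, namely $-\tfrac12\sum_j \Phi/(t-z_j)=\der\Phi/\der t$ (giving $I_1+\dots+I_n=0$) and the statement that $\big(\der/\der z_i-\tfrac12\sum_{j\ne i}\Omega_{ij}/(z_i-z_j)\big)$ applied to the integrand vector equals $\der\Psi^i/\der t$ with $\Psi^i=(0,\dots,0,-\Phi/(t-z_i),0,\dots,0)$, and then invokes Stokes' theorem on the cycle $C$. Your partial-fraction bookkeeping is exactly the verification of the second identity (in particular, the off-diagonal components cancel identically with no exact-derivative term, while the diagonal leftover is $\der_t\big({-\Phi/(t-z_i)}\big)$), so the two arguments coincide.
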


This theorem is a very particular case of the results in \cite{SV1}.

\begin{proof}  
The theorem follows from  Stokes' theorem and the two identities:
\bean
\label{i1}
-\frac 12\,
\Big(\frac {\Phi(t,z)}{t-z_1}
 + \dots  + \frac {\Phi(t,z)}{t-z_n}\Big)\,  
=\, \frac{\der\Phi}{\der t}(t,z)\,,
\eean
\bean
\label{i2}
\Big(\frac{\der }{\der z_i}-\frac12
\sum_{j\ne i} \frac {\Omega_{i,j}}{z_i-z_j} \Big)
\Big(\frac {\Phi(t,z)}{t-z_1}, \dots, \frac {\Phi(t,z)}{t-z_n}\Big)\,  
= \frac{\der \Psi^i}{\der t} (t,z),
\eean
where  $\Psi^i(t,z)$ is the column $n$-vector   $(0,\dots,0,-\frac{\Phi(t,z)}{t-z_i},0,\dots,0)$ with 
the nonzero element at the $i$-th place. 
\end{proof}

\begin{thm} [{cf.~\cite[Formula (1.3)]{V1}}]
\label{thm dim}

All solutions of system \eqref{KZ} have this form. 
Namely, the complex vector space of solutions of the form \eqref{KZ sol} is $(n-1)$-dimensional.

\end{thm}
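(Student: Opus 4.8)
The plan is to prove the two assertions separately: first that the space of solutions of the constrained system \eqref{KZ} is exactly $(n-1)$-dimensional, and then that the hypergeometric integrals \eqref{KZ sol} already fill up this space.

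For the dimension count I would first recall that the differential part $\nabla_i^{\on{KZ}} I = 0$ alone is a flat (integrable) connection on the trivial rank-$n$ bundle over the configuration space $\{z : z_i \ne z_j\}$; integrability is the standard consequence of the infinitesimal braid relations satisfied by the matrices $\Om_{ij}$. Hence near a generic base point its local solution space is $n$-dimensional. Next I would show that the algebraic constraint $I_1+\dots+I_n=0$ is preserved by the flow and cuts the dimension down by exactly one. Writing $\mathbf 1=(1,\dots,1)$ for the row covector, a direct inspection of $\Om_{ij}$ shows that its column sums vanish, i.e.\ $\mathbf 1\,\Om_{ij}=0$, so that $\mathbf 1\,H_i(z)=0$ and therefore $\der_{z_i}(\mathbf 1\,I)=0$ for every $i$; thus $\mathbf 1\,I$ is a constant of motion and the constraint is consistent. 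Because $\Om_{ij}$ is symmetric its row sums also vanish, so the constant column vector $I(z)\equiv(1,\dots,1)$ is a solution with $\mathbf 1\,I=n\ne0$; hence the linear functional $I\mapsto\mathbf 1\,I$ on the $n$-dimensional solution space is nonzero, and its kernel, which is precisely the solution space of the full system \eqref{KZ}, is $(n-1)$-dimensional.

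For the completeness of the integral solutions I would pass to twisted (co)homology of the punctured line. The master function $\Phi(t,z)=\prod_i(t-z_i)^{-1/2}$ defines a rank-one local system $\mc L$ on $\C-\{z_1,\dots,z_n\}$ whose monodromy around each $z_i$ is $-1$; since $n=2g+1$ is odd, the monodromy at infinity is $(-1)^n=-1\ne1$, so $\mc L$ is nonresonant at all $n+1$ punctures of $\PP^1$. By the standard theory of hypergeometric integrals (as developed in \cite{SV1}), for such a nontrivial local system on a curve of Euler characteristic $2-(n+1)$ one has $H^0=H^2=0$ and $\dim H^1=n-1$, with the dual twisted homology of the same dimension, matching the count found above. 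The admissible contours $C$ of \eqref{KZ sol} are exactly representatives of a basis of this twisted homology, so $C\mapsto I^{(C)}$ is a well-defined linear map from an $(n-1)$-dimensional space into the $(n-1)$-dimensional solution space.

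It remains to see that this map is an isomorphism, and this is the main obstacle. The classes of the $n$ forms $\omega_i=\Phi(t,z)(t-z_i)^{-1}\,dt$ span $H^1(\C-\{z_i\},\mc L)$ and satisfy the single relation $\sum_i\omega_i=-2\,d\Phi\equiv0$ coming from identity \eqref{i1}; since the cohomology has dimension $n-1$, these classes are otherwise independent and form a basis. The crux is then the nondegeneracy of the period pairing between this basis of cohomology and a basis of homology, equivalently the statement that the $(n-1)$ vectors $I^{(C)}$ arising from a homology basis are linearly independent. I would establish this by invoking the nondegeneracy of the twisted period pairing in the nonresonant case from \cite{SV1}, which forces $C\mapsto I^{(C)}$ to be injective, hence an isomorphism onto the full $(n-1)$-dimensional solution space. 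This simultaneously yields the stated dimension and the fact that every solution is of the form \eqref{KZ sol}.
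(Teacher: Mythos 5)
Your proposal is correct, but note that the paper itself offers no proof of this theorem: it is stated with a reference to \cite{V1}, Formula (1.3), which is an explicit product formula for the determinant of the period matrix of the integrals \eqref{KZ sol} taken over a basis of twisted cycles. Your argument reaches the same conclusion by a more abstract route. The first half (flatness of the connection via the infinitesimal braid relations for $\Om_{ij}$, conservation of $I_1+\dots+I_n$ because the column sums of $\Om_{ij}$ vanish, and the constant vector $(1,\dots,1)$ witnessing that the functional $I\mapsto I_1+\dots+I_n$ is nonzero on the $n$-dimensional local solution space) is a clean and complete dimension count that the cited reference also needs in some form. For the second half, where the reference exhibits a nonvanishing period determinant, you instead invoke the general machinery: nonresonance of the local system (monodromy $-1$ at each $z_i$ and, since $n$ is odd, also at $\infty$), the computation $\dim H^1=n-1$, the fact that the logarithmic forms $\Phi(t,z)(t-z_i)^{-1}dt$ span $H^1$ with the single relation \eqref{i1}, and the perfectness of the period pairing. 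These spanning and nondegeneracy statements are genuine theorems (Aomoto, Esnault--Schechtman--Viehweg, Cho--Matsumoto; they are not literally in \cite{SV1}, so your attribution is loose), valid here precisely because of nonresonance; granting them, your injectivity argument is airtight. The trade-off is the usual one: the determinant formula of \cite{V1} is constructive and self-contained, while your argument is shorter but outsources the key nondegeneracy to the general theory of twisted (co)homology.
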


\subsection{Solutions as vectors of first derivatives}
\label{sec 11.5}

Consider the hyperelliptic integral
\bea
T(z) = T^{(C)}(z) =
\int_C
\Phi(t,z) \,dt.
\eea
Then
\bea
I^{(C)}(z) 
=
\,
2\,
\Big(\frac {\der T^{(C)}}{\der z_1}, \dots ,
\frac {\der T^{(C)}}{\der z_n}\Big).
\eea
Denote
$\nabla T =
\Big(\frac {\der T}{\der z_1},\dots,  \frac {\der T}{\der z_n}\Big)$.
Then the column gradient vector of the function $T(z)$ satisfies the following system of  (KZ) equations
\bea
\nabla_i^{\on{KZ}} \nabla T =0, \quad i=1,\dots,n,\qquad  
\frac {\der T}{\der z_1} +\dots + \frac {\der T}{\der z_n}=0.
\eea
This is a system of second order linear differential equations on the function $T(z)$.

\subsection{Solutions modulo $p^s$}
\label{sec:new}

For an integer $s\geq 1$ define the {\it master polynomial}
\bea
\Phi_s(t,z) = \big((t-z_1)\dots(t-z_n)\big)^{(p^s-1)/2}.
\eea
Recall that $n=2g+1$.   For $\ell=1,\dots, g$ define the column $n$-vector
\bea
I_{s,\ell} (z)=(I_{s,\ell,1}, \dots, I_{s,\ell.n})
\eea
as the coefficient of $t^{\ell p^s-1}$ in the column $n$-vector of polynomials 
\bea
\Big(\frac{\Phi_s(t,z)}{t-z_1}, \dots, \frac {\Phi_s(t,z)}{t-z_n}\Big).
\eea

Notice that $\deg_t \frac{\Phi_s(t,z)}{t-z_i} = (2g+1)\frac{p^s-1}2-1$. If
$\ell \notin \{1,\dots,g\}$\,,\ then $\frac{\Phi_s(t,z)}{t-z_i}$ does not have the monomial 
$t^{\ell p^s-1}$.

\vsk.2>

\begin{thm} [\cite{V5}]
\label{thm 7.3}
The column $n$-vector  $I_{s,\ell}(z)$ of polynomials in $z$ is a solution of system \eqref{KZ}
modulo $p^s$.

\end{thm}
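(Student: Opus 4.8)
The plan is to transcribe into the polynomial-coefficient setting the argument used above over $\C$ (Stokes' theorem together with the identities \eqref{i1} and \eqref{i2}), reading the operation ``take the coefficient of $t^{\ell p^s-1}$'' as the arithmetic substitute for ``integrate over the closed contour $C$''. The mechanism behind this substitution is the elementary identity
\[
\Cf_{\ell p^s-1}\Big(\frac{\der g}{\der t}\Big)=\ell p^s\,\Cf_{\ell p^s}(g)\equiv 0\pmod{p^s},
\]
valid for every $g\in\Z[t,z]$: extraction of the $t^{\ell p^s-1}$-coefficient annihilates a $t$-derivative modulo $p^s$, exactly as $\oint\der_t(\cdots)\,dt=0$ does over $\C$. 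Since the master polynomial $\Phi_s$ has integer coefficients and the Gaudin Hamiltonians $H_i(z)$ of \eqref{GH} act only on the $z$-variables, the operator $\Cf_{\ell p^s-1}$ commutes with each $\der/\der z_i$ and with each $H_i$. Hence it suffices to check that the complex identities \eqref{i1} and \eqref{i2} survive, up to an error divisible by $p^s$, when $\Phi$ is replaced by $\Phi_s$.

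For the algebraic equation $I_{s,\ell,1}+\dots+I_{s,\ell,n}\equiv 0$, I would differentiate $\Phi_s$ in $t$ to obtain the polynomial form of \eqref{i1},
\[
\frac{\der\Phi_s}{\der t}=\frac{p^s-1}2\sum_{k=1}^n\frac{\Phi_s}{t-z_k},
\]
each summand $\Phi_s/(t-z_k)$ being a genuine polynomial because $(p^s-1)/2\ge1$. Clearing the denominator $2$ and extracting the $t^{\ell p^s-1}$-coefficient turns this into $(p^s-1)\sum_{k=1}^n I_{s,\ell,k}=2\ell p^s\,\Cf_{\ell p^s}(\Phi_s)\equiv0\pmod{p^s}$; since $\gcd(p^s-1,p)=1$, the factor $p^s-1$ is invertible modulo $p^s$ and may be cancelled. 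Note that $(p^s-1)/2\equiv-\tfrac12\pmod{p^s}$, so this is literally the reduction of \eqref{i1}.

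For the differential equations, set $P(t,z)=\big(\Phi_s/(t-z_1),\dots,\Phi_s/(t-z_n)\big)$ with components $P_k=\Phi_s/(t-z_k)$, so that $I_{s,\ell}=\Cf_{\ell p^s-1}P$. I would compute $\nabla_i^{\on{KZ}}P=(\der/\der z_i-H_i)P$ directly, using $\der\Phi_s/\der z_i=-\tfrac{p^s-1}2\,\Phi_s/(t-z_i)$ together with the partial-fraction identity $(P_i-P_j)/(z_i-z_j)=\Phi_s/((t-z_i)(t-z_j))$ (which simultaneously shows that the denominators $z_i-z_j$ cancel, so $\nabla_i^{\on{KZ}}P$ is an honest polynomial vector). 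The outcome is the polynomial version of \eqref{i2},
\[
\nabla_i^{\on{KZ}}P=\frac{\der\Psi_s^i}{\der t}+\frac{p^s}2\,R_i,
\]
where $\Psi_s^i$ is the vector with $i$-th entry $-\Phi_s/(t-z_i)$ and zeros elsewhere, and $R_i$ is a vector of integer-coefficient polynomials (with $i$-th entry $\sum_{j\ne i}\Phi_s/((t-z_i)(t-z_j))$ and $k$-th entry $-\Phi_s/((t-z_i)(t-z_k))$ for $k\ne i$). The constant $p^s/2$ arises as $\tfrac12+\tfrac{p^s-1}2$ when matching the two sides. Applying $\Cf_{\ell p^s-1}$ and using that it commutes with $\der/\der z_i$ and $H_i$, that it kills $\der\Psi_s^i/\der t$ modulo $p^s$, and that $\tfrac{p^s}2R_i\equiv0\pmod{p^s}$ (as $\tfrac12$ is a $p$-adic unit), yields $\nabla_i^{\on{KZ}}I_{s,\ell}\equiv0\pmod{p^s}$.

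The one genuinely delicate point, and the place where the argument could go astray, is the bookkeeping of the error terms: one must verify that every discrepancy between the $\Phi_s$-identities and their $\Phi$-counterparts collects into a single coefficient that is $p$-adically divisible by $p^s$ (the factors $p^s/2$ and $\ell p^s$ above), and at the same time that the apparent poles in $t$ at $t=z_k$ and the poles in $z$ along $z_i=z_j$ all cancel, so that each side is a polynomial to which $\Cf_{\ell p^s-1}$ legitimately applies and ``$\equiv0\pmod{p^s}$'' means divisibility of integer coefficients. Once these cancellations are confirmed, the two congruences combine to show that $I_{s,\ell}$ solves \eqref{KZ} modulo $p^s$, which is the assertion of the theorem.
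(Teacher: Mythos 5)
Your proposal is correct and takes essentially the same route as the paper: the paper's proof consists precisely of the two polynomial identities $\frac{\partial\Phi_s}{\partial t}=\frac{p^s-1}{2}\sum_{k}\frac{\Phi_s}{t-z_k}$ and $\big(\frac{\partial}{\partial z_i}+\frac{p^s-1}{2}\sum_{j\ne i}\frac{\Omega_{i,j}}{z_i-z_j}\big)\big(\frac{\Phi_s}{t-z_1},\dots,\frac{\Phi_s}{t-z_n}\big)=\frac{\partial\Psi_s^i}{\partial t}$, from which the theorem follows by extracting the coefficient of $t^{\ell p^s-1}$, i.e.\ exactly your mechanism that $t$-derivatives are annihilated modulo $p^s$ by $\Cf_{\ell p^s-1}$. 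The only difference is cosmetic bookkeeping: the paper keeps the exact coefficient $\frac{p^s-1}{2}$ inside the operator and implicitly uses $\frac{p^s-1}{2}\equiv-\frac12\pmod{p^s}$, whereas you expand around $\nabla_i^{\on{KZ}}$ itself and carry the discrepancy as the explicit error term $\frac{p^s}{2}R_i$.
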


\vsk.2>

The column $n$-vectors  $I_{s,\ell}(z)$, $\ell=1,\dots,g$, were
called in \cite{V5} the {\it $p^s$-hypergeometric solutions} of the KZ equations
\eqref{KZ}.

\vsk.2>
\begin{proof}

We have the following modifications of identities \eqref{i1}, \eqref{i2}\,:
\bea
\frac {p^s-1}2\,
\Big(\frac {\Phi_s(t,z)}{t-z_1}
 + \dots + \frac {\Phi_s(t,z)}{t-z_n}\Big)\,  
=\, \frac{\der\Phi_s}{\der t}(t,z)\,,
\eea
\bea
\Big(\frac{\der }{\der z_i} + \frac {p^s-1}2
\sum_{j\ne i} \frac {\Omega_{i,j}}{z_i-z_j} \Big)
\Big(\frac {\Phi_s(t,z)}{t-z_1}, \dots, \frac {\Phi_s(t,z)}{t-z_n}\Big)\,  
= \frac{\der \Psi_s^i}{\der t} (t,z),
\eea
where  $\Psi_s^i(t,z)$ is the column $n$-vector   $(0,\dots,0,-\frac{\Phi_s(t,z)}{t-z_i},0,\dots,0)$ with 
the nonzero element at the $i$-th place. Theorem \ref{thm 7.3} follows from these identities.
\end{proof}

Consider the $n\times g$ matrix
\bea
I_s(z) = (I_{s,1},\dots,I_{s,g}) = \big(I_{s, \ell, i}\big)_{\ell = 1,\dots,g}^{i=1,\dots,n}\ ,
\eea
where $I_{s, \ell, i}$ stays at the $\ell$-th column and $i$-th row. The matrix $I_s(z)$ satisfies the KZ equations,
\bea
\nabla_i^{\on{KZ}}I_s(z) =0, \quad i=1,\dots,n,\qquad I_{s,\ell,1}+\dots + I_{s,\ell,n}(z) =0, 
\quad \ell=1,\dots,g,
\eea
modulo $p^s$.

\section{Congruences for solutions of KZ equations}
\label{sec 5}
\subsection{Congruences for Hasse--Witt matrices of KZ equations}
Let $p> 2g+1$,
\bean
\label{DN}
\Dl =\{1,\dots,g\}\subset \Z, \qquad N=[0, gp + (p-1)/2-g]\subset \R.
\eean

\begin{lem}
The infinite tuple $(N, N,\dots)$ is $\Dl$-admissible\textup, see Definition \textup{\ref{defN}}.
\qed

\end{lem}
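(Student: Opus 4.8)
The plan is to verify the inequality in Definition \ref{defN} directly for the constant tuple $(N,N,\dots)$ with $N=[0,gp+(p-1)/2-g]$ and $\Dl=\{1,\dots,g\}$. Since the tuple is constant, it suffices to show that for every $0\le i\le j<l$ (equivalently, for every $k=j-i\ge 0$) we have
\begin{equation*}
\big(\Dl + N + pN + \dots + p^{k}N\big)\cap p^{k+1}\Z \subset p^{k+1}\Dl .
\end{equation*}
First I would compute the left-hand summand interval. Writing $N=[0,M]$ with $M=gp+(p-1)/2-g$, the Minkowski sum $N+pN+\dots+p^kN$ is the interval $[0, M(1+p+\dots+p^k)]$, and adding $\Dl\subset[1,g]$ gives an integer interval $[1, g+M(p^{k+1}-1)/(p-1)]$. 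So the task reduces to checking that any multiple of $p^{k+1}$ lying in this interval must equal $p^{k+1}d$ for some $d\in\{1,\dots,g\}$.

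The key estimate is to bound the upper endpoint. The plan is to show $g+M(p^{k+1}-1)/(p-1) < (g+1)p^{k+1}$, so that the only multiples of $p^{k+1}$ in the interval are $p^{k+1},2p^{k+1},\dots,gp^{k+1}$, all of which lie in $p^{k+1}\Dl$; the multiple $0$ is excluded because the interval starts at $1$. To see the bound, note $M(p^{k+1}-1)/(p-1) = M(1+p+\dots+p^k)$, and since $M = gp+(p-1)/2-g = g(p-1)+(p-1)/2 = (p-1)(g+\tfrac12)$, this product equals $(g+\tfrac12)(p^{k+1}-1)$. Hence the upper endpoint is
\begin{equation*}
g + (g+\tfrac12)(p^{k+1}-1) = (g+\tfrac12)p^{k+1} + g - (g+\tfrac12) = (g+\tfrac12)p^{k+1} - \tfrac12 < (g+1)p^{k+1},
\end{equation*}
which is exactly the needed strict inequality.

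The main point to watch, which I expect to be the only real subtlety, is confirming the lower end: the interval begins at $1$ (not $0$), so the multiple $0$ of $p^{k+1}$ is genuinely excluded, and therefore no spurious element outside $p^{k+1}\Dl$ appears. The choice $p>2g+1$ guarantees $M>0$ so that $N$ is a nondegenerate interval, but the admissibility itself follows purely from the endpoint arithmetic above; once the strict bound $(g+\tfrac12)p^{k+1}-\tfrac12<(g+1)p^{k+1}$ is in hand, the containment is immediate and the lemma follows.
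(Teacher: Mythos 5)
Your proof is correct: the computation $M=(p-1)(g+\tfrac12)$, hence upper endpoint $(g+\tfrac12)p^{k+1}-\tfrac12<(g+1)p^{k+1}$, together with the lower endpoint $1>0$, pins the multiples of $p^{k+1}$ in the sum down to exactly $p^{k+1}\{1,\dots,g\}=p^{k+1}\Dl$. The paper states this lemma without proof (as immediate), and your direct verification of Definition \ref{defN} is precisely the intended argument, so there is nothing to compare beyond noting that you have supplied the omitted details.
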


Denote
\bea
F(t,z) : = \Phi_1(t,z) = \big((t-z_1)\dots(t-z_n)\big)^{(p-1)/2}.
\eea
 The Newton polytope of $F(t,z)$ with respect to variable $t$ is the interval
 \linebreak
$N=[0, gp + (p-1)/2 -g]$, see \eqref{DN}, and
\bea
\Phi_s(t,z) = F(t,z)\cdot F(t,z)^p\dots F(t,z)^{p^{s-1}}\,.
\eea
The infinite tuple $(F(t,z), F(t,z),\dots)$ is $\Dl$-admissible, see Definition \ref{defn}.

\vsk.2>
For $s\geq 1$ consider the Hasse--Witt $g\times g$ matrix 
\bea
A(s, \Phi_s(t,z))
=
\big( \Cf_{p^{s}v-u}(\Phi_s(t,z))\big)_{u,v=1,\dots,g}\,,
\eea
see \eqref{Cuv}. The entries of this matrix are polynomials in $z$.

\vsk.2>

\begin{thm} 
\label{thm F ne}

The polynomial $\det A(1, F(t,z))$ is nonzero  modulo $p$.

\end{thm}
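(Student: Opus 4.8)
The plan is to expand the entries of $A(1,F)$ explicitly as symmetric polynomials in $z$, strip off the signs so that the matrix becomes one with nonnegative integer coefficients, and then exhibit a single monomial of the determinant whose coefficient is forced to equal $1$, hence cannot vanish mod $p$. Setting $m=(p-1)/2$ and $n=2g+1$, I would first record, from $(t-z_i)^m=\sum_{a}\binom{m}{a}t^{m-a}(-z_i)^a$, the identity
\[
\Cf_{pv-u}F=(-1)^{d_{uv}}R_{uv},\qquad R_{uv}:=\sum_{\substack{a_1+\dots+a_n=d_{uv}\\ 0\le a_i\le m}}\binom{m}{a_1}\cdots\binom{m}{a_n}\,z_1^{a_1}\cdots z_n^{a_n},
\]
where $d_{uv}=nm-(pv-u)$. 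A short computation using $p=2m+1$ gives the clean formula $d_{uv}=m\bigl(2(g-v)+1\bigr)+(u-v)$, and since $\sum_u d_{u,w(u)}$ equals the constant $D:=g^2 m$ for every $w\in S_g$, the signs collapse to a global factor and $\det A(1,F)=(-1)^D\det(R_{uv})$. Thus it suffices to prove $\det(R_{uv})\not\equiv 0\pmod p$. Throughout I would use $p>2g+1$ to guarantee $0\le pv-u\le nm$ (so each $R_{uv}$ is a genuine polynomial) and $m>g$ (so the staircase below fits into the $n=2g+1$ variables).

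Next I would track the single monomial $\mu:=\prod_{u=1}^{g}\bigl(z_1^m z_2^m\cdots z_{2(g-u)+1}^m\bigr)$, which is the product of the leading monomials of the diagonal entries: for $u=v$ one has $d_{uu}=m\bigl(2(g-u)+1\bigr)$, so $R_{uu}$ contains $z_1^m\cdots z_{2(g-u)+1}^m$ with coefficient $\binom{m}{m}^{2(g-u)+1}=1$. Writing the exponent of $z_i$ in $\mu$ as $m\nu_i$, the vector $\nu=(g,\,g-1,g-1,\,g-2,g-2,\dots,1,1,\,0,0)$ is a staircase of total weight $g^2$. The identity permutation, paired with this diagonal layering, contributes exactly $\sign(\mathrm{id})\cdot 1=1$ to the coefficient of $\mu$ in $\det(R_{uv})=\sum_{w\in S_g}\sign(w)\prod_u R_{u,w(u)}$. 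The whole proof then reduces to showing that no other term of the determinant produces $\mu$.

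The heart of the argument, and the step I expect to be the main obstacle, is that $\mu$ admits a \emph{unique} realization: for any $w\in S_g$ and admissible exponent vectors $a^{(u)}$ with $\sum_i a^{(u)}_i=d_{u,w(u)}$, $0\le a^{(u)}_i\le m$ and $\sum_u a^{(u)}_i=m\nu_i$, one necessarily has $w=\mathrm{id}$ together with the standard layering. I would prove this by induction on $g$, peeling off the longest row. Columns $2g,2g+1$ carry $\nu=0$, forcing all entries there to vanish and hence every row degree to satisfy $d_{u,w(u)}\le (2g-1)m$; since $d_{u,1}=(2g-1)m+(u-1)$, only $u=1$ can be assigned to column $1$, so $w(1)=1$, and the capacity bound then forces $a^{(1)}=(m,\dots,m,0,0)$ with exactly $2g-1$ entries equal to $m$. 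Deleting this row lowers the profile $\nu$ precisely to the genus-$(g-1)$ profile on $z_1,\dots,z_{2g-1}$, while the surviving degrees $d_{u,v}$ reindex (via $u\mapsto u-1$, $v\mapsto v-1$) to the genus-$(g-1)$ degrees, so the induction closes at the trivial base $g=1$ (where $R_{11}$ contains $z_1^m$ with coefficient $1$). Consequently the coefficient of $\mu$ in $\det(R_{uv})$ equals $1$, whence $\det(R_{uv})$, and therefore $\det A(1,F)$, is nonzero modulo $p$.
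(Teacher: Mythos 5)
Your proof is correct, and it certifies nonvanishing at exactly the same monomial the paper uses, namely $\mu=\prod_{v=1}^{g}(z_1\cdots z_{2g+1-2v})^{(p-1)/2}$, the product of the diagonal entries' top terms; but your certification mechanism is genuinely different. The paper orders monomials lexicographically, records (``by inspection'') the lex-leading term of every entry $A_{u,v}(z)$, and then asserts it is ``easy to see'' that the product of the diagonal leading terms lex-dominates the product of leading terms along every other permutation, so that $\mu$ survives as the leading term of the determinant with coefficient $\pm1$. You dispense with leading-term machinery altogether: after stripping signs --- using the observation, not made explicit in the paper, that $\sum_u d_{u,w(u)}=g^2(p-1)/2=:D$ is independent of $w\in S_g$, so all signs collapse to a global $(-1)^D$ --- you show by induction on $g$ that $\mu$ has a \emph{unique} realization $(w,\{a^{(u)}\})$ in the permutation expansion: the zero columns $2g,2g+1$ cap every row degree at $(2g-1)m$, which forces $w(1)=1$ and $a^{(1)}=(m,\dots,m,0,0)$, and the residual problem reindexes exactly to the genus-$(g-1)$ problem. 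Hence the coefficient of $\mu$ in the determinant equals $1$ on the nose, which is even slightly sharper than knowing it is $\pm1$. What each approach buys: the paper's lex argument is shorter and simultaneously produces the leading term \eqref{Deg}, whose degree is reused later in \eqref{deg d}; your induction is self-contained and supplies rigorous proofs of precisely the two steps the paper leaves as ``by inspection'' and ``easy to see'', while your homogeneity bookkeeping recovers the same degree $g^2(p-1)/2$, so nothing needed downstream is lost.
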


\begin{proof}

Consider the lexicographical ordering of monomials
$z_1^{d_1}\dots z_{2g+1}^{d_{2g+1}}$. We  have $z_1>\dots > z_{2g+1}$ and  so on.
For a nonzero Laurent polynomial
$f(z)=\sum_{d_1,\dots,d_{2g+1}} a_{d_1,\dots,d_{2g+1}} z_1^{d_1}$
\dots $z_{2g+1}^{d_{2g+1}}$\, with coefficients in $\Z$\,,\
the nonzero summand 
$a_{d_1,\dots,d_{2g+1}} z_1^{d_1}\dots z_{2g+1}^{d_{2g+1}}$ with the largest monomial
$z_1^{d_1}$ \dots $z_{2g+1}^{d_{2g+1}}$ is called
 the { leading term} of $f(z)$.
 
If $f(z)$ and $g(z)$ are two nonzero Laurent polynomials, then
the leading term of $f(z)g(z)$ equals the product of the leading terms of $f(z)$ and $g(z)$.

\vsk.2>

Denote $A(1, F(t,z)) = : (A_{u,v}(z))_{u,v=1,\dots,g}$\,.

\begin{lem}
If $p>2g+1$, the leading term of $A_{u,v}(z)$ equals 
\bea
&&
\pm\binom{(p-1)/2}{v-u} 
(z_1z_2\dots z_{2g+1-2v})^{(p-1)/2}/ z_{2g+1-2v}^{v-u}, \qquad \text{if}\;\; v\geq u,
\\
\notag
&&
\pm\binom{(p-1)/2}{u-v} 
(z_1z_2\dots z_{2g+1-2v})^{(p-1)/2} z_{2g+2-2v}^{u-v}, \qquad \quad\ \text{if}\;\; v\leq u.
\eea
For example, for $g=2$ the matrix of leading terms is
\bean
\label{ex d}
\begin{pmatrix}
 \pm(z_1z_2z_3)^{(p-1)/2} & \pm \binom{(p-1)/2}{1}z_1^{(p-1)/2}/z_1
 \\
   \pm \binom{(p-1)/2}{1}(z_1z_2z_3)^{(p-1)/2}z_4 & \pm z_1^{(p-1)/2} 
\end{pmatrix} .
\eean
\end{lem}

\begin{proof}
The proof is by inspection.
\end{proof}

It is easy to see that the leading term of the determinant of the matrix of leading terms
of $A_{u,v}(z)$ equals the product of diagonal elements,
\bean
\label{Deg}
\pm\ \prod_{v=1}^{g}( z_1\dots z_{2g+1-2v})^{(p-1)/2}.
\eean
This term is not congruent to zero modulo $p$.
This proves Theorem \ref{thm F ne}.
\end{proof}

\begin{cor}
\label{cor F ne}
The $\Dl$-admissible infinite tuple $(F(t,z), F(t,z),\dots )$ satisfies the assumptions of Theorem \textup{\ref{thm 1.6}}.
Therefore,
\begin{enumerate}
\item[\textup{(i)}] for $s\geq 1$ we have
\begin{align}
\label{alal+}
A(s,  \Phi_{s}(t,z))
\equiv
A(1, F(t,z))\cdot\si(A(1, F(t,z)))\dots \si^{s-1}(A\big(1, F(t,z)))
\pmod{p}\,;
\end{align}
\item[\textup{(ii)}] for $s\geq 1$ 
the determinant of the matrix $A(s,  \Phi_s(t,z))$
is a polynomial, which is nonzero modulo~$p$, and 
we have modulo $p^s$\,\textup:
\begin{align*}
&
A(s+1,  \Phi_{s+1}(t,z))\cdot \si(A(s,  \Phi_{s}(t,z)))^{-1}
\equiv
A(s,  \Phi_{s}(t,z))\cdot \si(A(s-1,  \Phi_{s-1}(t,z)))^{-1},
\end{align*}
where for $s=1$ we understand the second factor on the right 
as the  $g\times g$ identity matrix.
\end{enumerate}

\end{cor}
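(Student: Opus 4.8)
The plan is to obtain the corollary as a direct specialization of Theorem \ref{thm 1.6} to the constant tuple in which every $\La_i$ equals $F(t,z)$, the nontrivial content having already been placed in that theorem and in Theorem \ref{thm F ne}. The one genuinely necessary preliminary is to set up the dictionary between the products $W_s$, $W_s^{(j)}$ of the general theory and the master polynomials $\Phi_s$. Using $\Phi_r=F^{1+p+\dots+p^{r-1}}$ (which holds because $(p-1)/2\cdot(1+p+\dots+p^{r-1})=(p^r-1)/2$), the specialization $\La_i=F$ gives
\[
W_s(x)=\La_0\La_1^p\dots\La_s^{p^s}=F^{1+p+\dots+p^s}=\Phi_{s+1}(t,z),
\]
and, more generally, $W_s^{(j)}=\La_j\dots\La_s^{p^{s-j}}=F^{1+p+\dots+p^{s-j}}=\Phi_{s-j+1}$. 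Thus every Hasse--Witt matrix occurring in Theorem \ref{thm 1.6} for the constant tuple takes the form $A(r,\Phi_r)$ for a suitable $r$, which is precisely the family of matrices appearing in the corollary.

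First I would verify that the hypotheses of Theorem \ref{thm 1.6} are met. The $\Dl$-admissibility of $(F,F,\dots)$ has already been recorded in the discussion preceding Theorem \ref{thm F ne} via Definition \ref{defn}, so the only remaining hypothesis is that $\det A(1,\La_i)$ be nonzero modulo $p$ for each $i$. Since all the $\La_i$ coincide with $F$, this collapses to the single assertion that $\det A(1,F(t,z))$ is nonzero modulo $p$, which is exactly the conclusion of Theorem \ref{thm F ne}. This is what licenses invoking the full strength of Theorem \ref{thm 1.6}, including the inverse matrices in part (ii).

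Next I would translate the two conclusions through the dictionary. Part (i) of Theorem \ref{thm 1.6} reads $A(s+1,W_s)\equiv A(1,\La_0)\si(A(1,\La_1))\dots\si^s(A(1,\La_s))\pmod p$; substituting $\La_i=F$ and $W_s=\Phi_{s+1}$ and then shifting the index $s+1\mapsto s$ (valid for $s\ge1$) reproduces \eqref{alal+}. For part (ii), the first assertion of Theorem \ref{thm 1.6}(ii) gives the nonvanishing modulo $p$ of $\det A(s,\La_1\La_2^p\dots\La_s^{p^{s-1}})=\det A(s,\Phi_s)$; moreover, since $\Phi_s$ is a genuine polynomial in $t$ and $z$, its coefficients $\Cf_{p^sv-u}(\Phi_s)$ are polynomials in $z$, which sharpens the theorem's ``Laurent polynomial'' to ``polynomial'' as stated. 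Finally, rewriting the four matrices in the congruence of Theorem \ref{thm 1.6}(ii) as $A(s+1,\Phi_{s+1})$, $A(s,\Phi_s)$, $A(s,\Phi_s)$, and $A(s-1,\Phi_{s-1})$ yields the displayed congruence, with the $s=1$ convention inherited verbatim.

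I do not expect a real obstacle here: the analytic and combinatorial work is already complete in Theorem \ref{thm 1.6}, and the nondegeneracy input is supplied by Theorem \ref{thm F ne}. The only point demanding care is the index arithmetic in $W_s^{(j)}=\Phi_{s-j+1}$ and in the shift $s+1\mapsto s$ between the theorem and part (i), since an off-by-one slip would misalign the powers of $\si$; I would guard against this by comparing the exponents $1+p+\dots+p^{s-j}$ on both sides of each identification.
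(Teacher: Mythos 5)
Your proposal is correct and is essentially the paper's own proof: the paper derives this corollary in one line from Theorems \ref{thm F ne} and \ref{thm 1.6}, exactly the specialization to the constant tuple $\La_i=F$ that you carry out. Your dictionary $W_s=\Phi_{s+1}$, $W_s^{(j)}=\Phi_{s-j+1}$ and the index shift $s+1\mapsto s$ are the correct (implicit) bookkeeping behind that one-line argument.
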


\begin{proof}
The corollary follows from Theorems \ref{thm F ne} and \ref{thm 1.6}.
\end{proof}

\vsk.2>

\subsection{Congruences for frames of solutions of KZ equations}

\begin{thm}
\label{thm coS}
We have the following congruences of $n\times g$ matrices.

\begin{enumerate}
\item[\textup{(i)}] For $s\geq 1$,
\bea
I_{s+1}(z) \cdot A(s+1,  \Phi_{s+1}(t,z))^{-1}
\equiv 
I_{s}(z) \cdot A(s,  \Phi_{s}(t,z))^{-1}
\pmod{p^s}\,.
\eea

\item[\textup{(ii)}] For $s\geq 1$ and $j=1,\dots, n$,
\bea
\frac {\der I_{s+1}}{\der z_j} (z) \cdot A(s+1,  \Phi_{s+1}(t,z))^{-1}
\equiv 
\frac{\der I_{s}}{\der z_j}(z) \cdot A(s,  \Phi_{s}(t,z))^{-1}
\pmod{p^s}\,.
\eea

\end{enumerate}

\end{thm}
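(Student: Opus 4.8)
The plan is to realize each row of the solution matrices $I_s(z)$ as the top row of a Hasse--Witt matrix and then feed these into the machinery of Section~\ref{sec 2}. Fix $i\in\{1,\dots,n\}$ and set $G_i(t,z):=F(t,z)/(t-z_i)$, a genuine Laurent polynomial since $(t-z_i)^{(p-1)/2}$ divides $F$. Because
\[
\frac{\Phi_s(t,z)}{t-z_i}=G_i(t,z)\,F(t,z)^p\,F(t,z)^{p^2}\cdots F(t,z)^{p^{s-1}},
\]
the polynomial $\Phi_s/(t-z_i)$ is exactly the word $W_{s-1}$ of the tuple $(G_i,F,F,\dots)$. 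Its Newton polytope in $t$ is contained in $N=N(F)$, so this tuple is again $\Dl$-admissible (shrinking $N_0$ only shrinks the left-hand sets in Definition~\ref{defN}). Writing $B_s:=A(s,\Phi_s(t,z))$ and $B_s^{(i)}:=A\big(s,\Phi_s(t,z)/(t-z_i)\big)$, the definition of the Hasse--Witt matrix gives $(B_s^{(i)})_{1,\ell}=\Cf_{p^s\ell-1}(\Phi_s/(t-z_i))=I_{s,\ell,i}$; that is, the $i$-th row of $I_s(z)$ is the $u=1$ row of $B_s^{(i)}$.

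Next I would apply Theorem~\ref{thm 1.6}(ii) twice. For the tuple $(G_i,F,F,\dots)$ it yields, with $C_s:=\si(B_s)^{-1}$,
\[
B_{s+1}^{(i)}\,C_s\equiv B_s^{(i)}\,C_{s-1}\pmod{p^s},
\]
while for the all-$F$ tuple it is Corollary~\ref{cor F ne}(ii), giving $B_{s+1}C_s\equiv B_sC_{s-1}\pmod{p^s}$. Here I must address that Theorem~\ref{thm 1.6}(ii) nominally demands $\det A(1,\La_j)\not\equiv0$ for all $j$, including $j=0$, i.e.\ for $G_i$; but an inspection of its proof shows that $A(1,\La_0)$ is never inverted---only the matrices attached to $W^{(j)}$ with $j\ge1$, which do not involve $\La_0$, occur as inverses---so Theorem~\ref{thm F ne} ($\det A(1,F)\not\equiv0$) already suffices. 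Using the factorization $B_{s+1}^{(i)}B_{s+1}^{-1}=(B_{s+1}^{(i)}C_s)(B_{s+1}C_s)^{-1}$ together with the invertibility of $B_{s+1}C_s$ modulo $p$, the two displayed congruences give $B_{s+1}^{(i)}B_{s+1}^{-1}\equiv B_s^{(i)}B_s^{-1}\pmod{p^s}$. Left-multiplying by the $u=1$ coordinate row for every $i$ assembles part~(i).

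For part~(ii) I would differentiate. Since $\der/\der z_j$ commutes with extracting the $t$-coefficient, the $i$-th row of $\der I_s/\der z_j$ is the $u=1$ row of $D_j(B_s^{(i)})$, where $D_j=\der/\der z_j$; so it suffices to prove $D_j(B_{s+1}^{(i)})B_{s+1}^{-1}\equiv D_j(B_s^{(i)})B_s^{-1}\pmod{p^s}$. Writing $B_s^{(i)}=P_s^{(i)}B_s$ with $P_s^{(i)}:=B_s^{(i)}B_s^{-1}$ and applying the product rule gives
\[
D_j(B_s^{(i)})\,B_s^{-1}=D_j(P_s^{(i)})+P_s^{(i)}\,D_j(B_s)\,B_s^{-1}.
\]
The first term is stable modulo $p^s$ because differentiating the congruence of part~(i) in $z_j$ preserves divisibility by $p^s$ (the difference is $p^s$ times a rational function whose denominator is a $p$-adic unit on $\frak D$). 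The logarithmic derivative $D_j(B_s)B_s^{-1}=D_j(A(s,\Phi_s))A(s,\Phi_s)^{-1}$ is stable modulo $p^s$ by Theorem~\ref{thm der} with $m=0$ for the all-$F$ tuple, and $P_s^{(i)}$ is stable by part~(i); hence the second term is stable as well. Combining these proves part~(ii).

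The main obstacle I anticipate is the bookkeeping needed to legitimize Theorem~\ref{thm 1.6}(ii) for the mixed tuple $(G_i,F,F,\dots)$---specifically the observation that the determinant hypothesis on $\La_0=G_i$ is inessential---and the verification that differentiating a mod-$p^s$ congruence of these rational functions (whose determinant denominators are $p$-adic units on $\frak D$) keeps it a mod-$p^s$ congruence. Both become routine once phrased correctly, but they are precisely where a careless argument would break.
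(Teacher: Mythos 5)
Your proof is correct, but it takes a genuinely different route from the paper's. The paper's proof rests on the single identity $\nabla A(s,z)=\frac{1-p^s}{2}\,I_s(z)$: since $\frac{\der}{\der z_i}\Phi_s=\frac{1-p^s}{2}\,\Phi_s/(t-z_i)$, the matrix $I_s(z)$ is, up to the $p$-adic unit $\frac{1-p^s}{2}\equiv\frac12\pmod{p^s}$, the matrix of $z$-gradients of the first row of the Hasse--Witt matrix $A(s,\Phi_s)$ itself; part (i) is then literally the first row of the first-derivative congruence (Theorem \ref{thm der} with $m=0$) for the pure tuple $(F,F,\dots)$, and part (ii) is the first row of the second-derivative congruence (Theorem \ref{thm der2}). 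You instead realize the $i$-th row of $I_s$ exactly (no scalar) as the first row of the Hasse--Witt matrix attached to the mixed tuple $(F/(t-z_i),F,F,\dots)$, obtain part (i) from the plain Dwork congruence (Theorem \ref{thm 1.6}(ii)) for this tuple combined with Corollary \ref{cor F ne}(ii), and obtain part (ii) from the product rule, Theorem \ref{thm der} for the pure tuple, and differentiation of the part-(i) congruence; the two routes are linked by the identity (derivative of the pure Hasse--Witt row) $=$ (unit)$\,\times\,$(mixed Hasse--Witt row), but they invoke different pieces of the Section \ref{sec 2} machinery. Your two supporting observations are valid and are indeed the crux: admissibility of the mixed tuple holds because shrinking $N_0$ only shrinks the sets in Definition \ref{defN}; and in the proof of Theorem \ref{thm 1.6}(ii) the only matrices ever inverted are those attached to $W^{(j)}$ with $j\geq 1$ (and the inductive hypothesis is applied only to subtuples $(\La_i,\La_{i+1},\dots)$ with $i\geq1$), so the hypothesis $\det A(1,\La_0)\not\equiv 0\pmod p$ is never used. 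Likewise, differentiating a mod-$p^s$ congruence is legitimate here because all denominators (determinants nonzero mod $p$) are units in the localization of $\Z_p[z^{\pm1}]$ at the prime ideal $(p)$, where the paper's cross-multiplication convention coincides with congruence in that local ring. As for what each approach buys: the paper's argument is shorter given that Theorems \ref{thm der} and \ref{thm der2} are already in hand, and it sidesteps mixed tuples entirely; yours dispenses with the second-derivative congruences of Theorem \ref{thm der2} altogether and isolates a mild but useful strengthening of Theorem \ref{thm 1.6}(ii)\,---\,that the nondegeneracy hypothesis on $\La_0$ is superfluous\,---\,at the price of the extra bookkeeping you flagged.
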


\vsk.2>

\begin{proof}
Consider the first row of the Hasse--Witt matrix $A(s, \Phi_s(t,z))$,
\bea
\big(A_{1,1}(s, \Phi_s(t,z)),\dots, A_{1,g}(s, \Phi_s(t,z))\big),
\quad
A_{1,\ell}(s, \Phi_s(t,z)) = \on{Coeff}_{\ell p^s-1}(\Phi_{s}(t,z)).
\eea
For each  $A_{1,\ell}(s, \Phi_s(t,z))$ we view the gradient
\bea
\nabla A_{1,\ell}(s, \Phi_s(t,z))=\Big(\frac{\der A_{1,\ell}(s)}{\der z_1}, \dots,
\frac{\der A_{1,\ell}(s)}{\der z_n} \Big)
\eea
as a column $n$-vector. The resulting $n\times g$ matrix of gradients
\bea
\nabla A(s,z):=(\nabla A_{1,1}(s, \Phi_s(t,z)),\dots,\nabla A_{1,g}(s, \Phi_s(t,z)))
\eea
is proportion to the matrix $I_s(z)$, $\nabla A(s,z) = \frac{1-p^s}2 I_s(z)$.
By Theorems \ref{thm der} and \ref{thm der2} we have
modulo $p^s$,
\bea
&
\nabla A(s+1,z)
\cdot A(s+1,  \Phi_{s+1}(t,z))^{-1}
\equiv 
\nabla A(s,z) \cdot A(s,  \Phi_{s}(t,z))^{-1},
\\
&
\frac{\der}{\der z_j}\big(\nabla A(s+1,z)\big)
\cdot A(s+1,  \Phi_{s+1}(t,z))^{-1}
\equiv 
\frac{\der}{\der z_j}\big(\nabla A(s,z)\big) \cdot A(s,  \Phi_{s}(t,z))^{-1}.
\eea
These congruences imply the theorem.
\end{proof}

\begin{cor}
\label{thm KZ mod p}
For $s\geq 1$ we have
\bea
I_{s}(z) \cdot A(s,  \Phi_{s}(t,z))^{-1} \equiv 
I_{1}(z) \cdot A(1,  \Phi_{1}(t,z))^{-1} \pmod{p}.
\eea

\end{cor}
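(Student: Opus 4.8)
The plan is to obtain the corollary as a formal consequence of the one-step congruence in part~(i) of Theorem~\ref{thm coS}, simply by iterating it and reducing modulo~$p$. The key observation is that for every $s'\ge 1$ the modulus $p^{s'}$ appearing in Theorem~\ref{thm coS}(i) is a multiple of~$p$, so each one-step congruence holds in particular modulo~$p$.

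Concretely, first I would record, for each $1\le s'\le s-1$, the reduction modulo~$p$ of the congruence of part~(i) of Theorem~\ref{thm coS},
\[
I_{s'+1}(z) \cdot A(s'+1,\Phi_{s'+1}(t,z))^{-1}
\equiv
I_{s'}(z) \cdot A(s',\Phi_{s'}(t,z))^{-1}
\pmod{p}\,.
\]
Here the inverse matrices are meaningful because, by part~(ii) of Corollary~\ref{cor F ne}, each polynomial $\det A(s',\Phi_{s'}(t,z))$ is nonzero modulo~$p$, and the congruences are to be read in the cleared-denominator sense described just before Theorem~\ref{thm 1.6}. Chaining these $s-1$ congruences by transitivity of congruence modulo~$p$, telescoping from level~$s$ down to level~$1$, yields
\[
I_{s}(z) \cdot A(s,\Phi_{s}(t,z))^{-1}
\equiv
I_{1}(z) \cdot A(1,\Phi_{1}(t,z))^{-1}
\pmod{p}\,,
\]
which is exactly the desired statement.

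I do not expect any genuine obstacle here, since all the analytic and arithmetic content is already contained in Theorem~\ref{thm coS}. The only point deserving a word of care is the well-definedness of the inverse matrices modulo~$p$, which is precisely the nondegeneracy guaranteed by part~(ii) of Corollary~\ref{cor F ne}; granting this, the corollary follows purely formally from the transitivity of the congruence relation.
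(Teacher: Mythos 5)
Your proof is correct and coincides with the paper's own (implicit) argument: the corollary is stated as an immediate consequence of part~(i) of Theorem~\ref{thm coS}, obtained exactly by reducing each one-step congruence modulo~$p$ and chaining them from level $s$ down to level $1$. Your additional remark on the well-definedness of the inverses via Corollary~\ref{cor F ne} and the cleared-denominator convention is the right point of care and is consistent with the paper's setup.
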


\section{Convergence of solutions of KZ equations}
\label{sec 6}

\subsection{Nonzero polynomials}
\label{sec ax}

\begin{lem}
\label{lem nonempty}

Let $z=(z_1,\dots,z_n)$.
Let $F(z)\in  \F_p[z]$
be a nonzero polynomial, $\deg F(z) \leq d$ for some $d$.
Let $p^m>d$. Then there are at least
$\frac{p^{mn}-1}{p^m-1} (p^m-1-d) + 1 $ points
of\  $(\F_{p^m})^n$ where $F(z)$ is nonzero.

\end{lem}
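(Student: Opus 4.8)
The plan is to set $q = p^m$ and exploit the hypothesis $q > d$ by slicing the affine space $(\F_q)^n$ into the pencil of lines through a single well-chosen point. First I would observe that since $F(z)$ is a nonzero polynomial of total degree $\le d < q$, each variable occurs to degree at most $d < q$, so $F$ is already reduced modulo the relations $z_i^{q}-z_i$ and hence is not the zero function on $(\F_q)^n$. Consequently there exists a point $a \in (\F_q)^n$ with $F(a)\ne 0$; this point will simultaneously anchor the pencil of lines and supply the stray $+1$ in the stated bound.

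Next I would organize the count around $a$. The affine lines through $a$ are indexed by directions $v\in\PP^{n-1}(\F_q)$, so there are exactly $\frac{q^n-1}{q-1}$ of them; any two meet only at $a$, and the punctured lines $L\setminus\{a\}$ (each of cardinality $q-1$) partition $(\F_q)^n\setminus\{a\}$, consistently with $\frac{q^n-1}{q-1}\cdot(q-1)=q^n-1$. On a line $L=\{a+tv\mid t\in\F_q\}$ the restriction $t\mapsto F(a+tv)$ is a univariate polynomial of degree at most $d$, since each monomial of $F$ of total degree $\le d$ contributes a $t$-polynomial of degree at most $d$. This restriction is \emph{not} identically zero, because its value at $t=0$ is $F(a)\ne 0$; therefore it has at most $d$ roots in $\F_q$, so at least $q-d$ of the $q$ points of $L$ carry a nonzero value of $F$.

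Finally I would sum over the pencil. Discarding the point $a$, which corresponds to $t=0$, each punctured line $L\setminus\{a\}$ contains at least $q-1-d$ points where $F$ is nonzero, and these punctured lines are pairwise disjoint and exhaust $(\F_q)^n\setminus\{a\}$. This yields at least $\frac{q^n-1}{q-1}(q-1-d)$ nonzero points away from $a$; adding back $a$ itself, where $F(a)\ne 0$, gives the claimed total $\frac{q^n-1}{q-1}(q-1-d)+1=\frac{p^{mn}-1}{p^m-1}(p^m-1-d)+1$. The one delicate point is the bookkeeping of the $+1$: it is essential that the center $a$ of the pencil be chosen \emph{where $F$ does not vanish}, so that every line restriction is a genuinely nonzero univariate polynomial, guaranteeing the per-line count $q-d$, and so that $a$ itself contributes one extra nonzero point on top of the punctured-line tally.
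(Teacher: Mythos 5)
Your proof is correct and follows essentially the same route as the paper's: pick a point $a$ where $F$ does not vanish, then count nonvanishing points along the $\frac{p^{mn}-1}{p^m-1}$ lines through $a$, each punctured line contributing at least $p^m-1-d$ such points, plus the point $a$ itself. The only cosmetic difference is that the paper establishes the existence of $a$ by induction on the number of variables, whereas you invoke the standard fact that a nonzero polynomial whose partial degrees are all below $q=p^m$ cannot be the zero function on $(\F_q)^n$.
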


\begin{proof}
First we show that there exists
$a\in (\F_{p^m})^n$ such that $F(a) \ne 0$.
The  proof is by induction on $n$. If $n=1$, then
the nonzero polynomial $F(z_1)$ cannot have more than $d$ zeros. Hence
there exists $a\in \F_{p^m}$ such that $F(a) \ne 0$.

Assume that the existence of $a$ is proved for all nonzero polynomials with less than $n$
variables.  Write $F(z)=\sum_i c_i(z_2,\dots,z_{n}) z_1^i$.
By the induction assumption,   there exists $a_2,\dots,a_n \in \F_{p^m}$ such that
$c_i(a_2,\dots,a_{n})\ne 0$ for at least one $i$. 
Hence 
$F(z_1,a_2\dots,a_{n})$ is a nonzero polynomial of degree $\leq d$
which defines a nonzero function of $z_1$. The existence of $a$ is proved.

Let $a\in (\F_{p^m})^n$ be such that
$F(a) \ne 0$. In $(\F_{p^m})^n$ there are  $\frac{p^{mn}-1}{p^m-1}$ distinct lines 
through $a$. On each of the lines there are at least $p^m-1-d$ points different from $a$ where $F(z)$
is nonzero.  Hence the total number of points where $F(z)\ne 0$ is at least
$\frac{p^{mn}-1}{p^m-1} (p^m-1-d) + 1$.
\end{proof}

\subsection{Unramified extensions of $\Q_p$}

We fix  an algebraic closure $\overline{\Q_p}$ of $\Q_p$.
For every $m$, there is a unique unramified extension of $\Q_p$ in 
 $\overline{\Q_p}$ of degree $m$, denoted by $\Q_p^{(m)}$.
This can be obtained by attaching to $\Q_p$ a primitive root of $1$ of order $p^m-1$.
The norm $|\cdot|_p$ on $\Q_p$ extends to a norm $|\cdot|_p$ on 
$\Q_p^{(m)}$.
Let 
\bea
\Z_p^{(m)} = \{ a\in \Q_p^{(m)} \mid |a|_p\leq 1\}
\eea
denote the ring of integers in $\Q_p^{(m)}$. The ring $\Z_p^{(m)}$
has the unique maximal ideal 
\bea
\mathbb M_p^{(m)} = \{ a\in \Q_p^{(m)} \mid |a|_p <1\},
\eea
such that $\mathbb Z_p^{(m)}\big/ \mathbb M_p^{(m)}$ is isomorphic to the finite field
$\F_{p^m}$.

For every $u\in\F_{p^m}$ there is a unique $\tilde u\in \mathbb Z_p^{(m)}$ that is a lift of $u$ and such that 
$\tilde u^{p^m}=\tilde u$. The element $\tilde u$ is called the Teichmuller lift of $u$.

\subsection{Domain $\frak D_B$}
For $u\in\F_{p^m}$ and $r>0$ denote
\bea
D_{u,r} = \{ a\in \Z_p^{(m)}\mid |a-\tilde u|_p<r\}\,.
\eea
We have the partition
\bea
\Z_p^{(m)} = \bigcup_{u\in\F_{p^m}} D_{u,1}\,.
\eea

Recall  $z=(z_1,\dots,z_n)$. For $B(z) \in \Z[z]$, define
\bea
\frak D_B \ =\ \{ a\in (\Z_p^{(m)})^n\,  \mid \  |B(a)|_p=1\} .
\eea
Let $\bar B(z)$ be the projection of $B(z)$ to $\F_p[z]\subset \F_{p^m}[z]$.
Then $\frak D_B$ is the union of unit polydiscs,
\bea
\frak D_B = \bigcup_{\substack{u_1,\dots,u_n\in \F_{p^m}\\  \bar B(u_1,\dots, u_n)\ne 0}} \  D_{u_1,1}\times \dots
\times D_{u_n,1}\,.
\eea
For any $k$ we have
\begin{align}
\notag
\{ a\in (\Z_p^{(m)})^n \mid \ |B(a^{p^k})|_p=1\}
&=\bigcup_{\substack{u_1,\dots,u_n\in \F_{p^m}\\  \si^k(\bar B(u_1,\dots, u_n))\ne 0}} \  D_{u_1,1}\times \dots
\times D_{u_n,1} =
\\
\notag
&=\bigcup_{\substack{u_1,\dots,u_n\in F_{p^m}\\  \bar B(u_1,\dots, u_n)\ne 0}} \  D_{u_1,1}\times \dots
\times D_{u_n,1} =
 \frak D_B \,.
\end{align}

\subsection{Uniqueness theorem}

Let  $\frak D \subset (\Z_p^{(m)})^n$ be the union of some of the
unit polydiscs
\linebreak
$ D_{u_1,1}\times \dots \times D_{u_n,1}$\,, where $u_1,\dots,u_n\in\F_{p^m}$.

Let $(F_i(z))_{i=1}^\infty$ and $(G_i(z))_{i=1}^\infty$ be two sequences of rational functions
on $(\F_{p^m})^n$. Assume that each of the rational functions has the form 
$P(z)/Q(z)$, where $P(z), Q(z)\in\Z[z]$, and for any
polydisc $ D_{u_1,1}\times \dots \times D_{u_n,1}\,\subset \frak D$, we have
\bea
|Q(\tilde u_1,\dots,\tilde u_n)|_p=1,
\eea
which implies that
\bea
|Q(a_1,\dots,a_n)|_p=1,\qquad \forall\ (a_1,\dots,a_n)\in \frak D.
\eea
Assume that  the sequences 
$(F_i(z))_{i=1}^\infty$ and $(G_i(z))_{i=1}^\infty$
 uniformly converge on $\frak D$ to analytic functions, which we denote by $F(z)$ and $G(z)$, respectively. 

\begin{thm}
\label{thm U}
Under these assumptions, if $F(z)=G(z)$ on an open 
nonempty subset of~$\frak D$. Then $F(z)=G(z)$ on~$\frak D$.

\end{thm}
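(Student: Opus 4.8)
The plan is to prove the identity theorem for the $p$-adic analytic functions $F(z)$ and $G(z)$ on the domain $\frak D$, which is a finite or infinite union of unit polydiscs $D_{u_1,1}\times\dots\times D_{u_n,1}$. The central difficulty is that $\frak D$ is not connected in the usual topological sense\,---\,each unit polydisc is both open and closed\,---\,so an off-the-shelf analytic continuation argument does not immediately apply; coincidence on one polydisc says nothing \emph{a priori} about a different polydisc. The real content must therefore be a rigidity statement tying the polydiscs together, and this is where I expect the main obstacle to lie.

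First I would reduce to a single polydisc. On any fixed polydisc $D_{u_1,1}\times\dots\times D_{u_n,1}\subset\frak D$, the uniform limits $F(z)$ and $G(z)$ are genuine $p$-adic analytic functions: after the affine change of variables $z_v=\tilde u_v+p\,w_v$ each $F_i$, being $P(z)/Q(z)$ with $|Q|_p=1$ on the polydisc, expands as a convergent power series in the $w_v$ with $\Z_p^{(m)}$-coefficients, and uniform convergence of the $F_i$ passes to a power series for $F$ (similarly $G$). So on each polydisc $F$ and $G$ are honest convergent power series. The hypothesis gives an open nonempty subset $U\subset\frak D$ on which $F=G$; since $U$ is open it contains a small polydisc, hence is Zariski-dense inside whichever single unit polydisc $P_0$ contains it, and the standard one-polydisc identity principle for convergent power series over a complete nonarchimedean field forces $F=G$ on all of $P_0$. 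This settles the single polydisc that meets $U$.

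The remaining\,---\,and genuinely harder\,---\,step is to propagate the equality $F=G$ from $P_0$ to every other polydisc of $\frak D$. Here I would exploit the extra structure that $F$ and $G$ are limits of \emph{rational} functions $F_i=P_i/Q_i$, $G_i=R_i/S_i$ with \emph{integer} polynomial numerators and denominators, and that $\frak D$ is $\sigma$-invariant in the sense recorded just before the theorem, $\{a:|B(a^{p^k})|_p=1\}=\frak D_B$. The idea is that a single rational function over $\Z$ does not see the polydisc decomposition: its reduction modulo $p$ is one polynomial identity on $(\F_{p^m})^n$. Concretely, I would consider the difference $H_i(z):=F_i(z)S_i(z)Q_i(z)-G_i(z)Q_i(z)S_i(z)\in\Z[z]$ (clearing denominators so as to stay polynomial), observe that $F=G$ on $P_0$ forces $H_i\equiv0\pmod{p^{N_i}}$ with $N_i\to\infty$ on the Teichm\"uller points of $P_0$, and then use Lemma~\ref{lem nonempty}: a nonzero polynomial over $\F_{p^m}$ of bounded degree cannot vanish on too large a set, while vanishing on the $\F_{p^m}$-points coming from $P_0$ and the density/counting bound force the reduction of the relevant numerator polynomial to be identically zero on all of $(\F_{p^m})^n$. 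Lifting this back, the congruence $F_i\equiv G_i$ improves by one power of $p$ at \emph{every} Teichm\"uller point simultaneously, and passing to the limit $i\to\infty$ yields $F=G$ on every polydisc of $\frak D$, completing the proof.

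I expect the technical heart to be making the counting argument of Lemma~\ref{lem nonempty} interact correctly with the increasing $p$-adic precision: one must show that equality to high $p$-adic order on the Teichm\"uller points of one polydisc, combined with the bounded $t$- and $z$-degrees inherited from the Hasse--Witt construction, forces the mod-$p^k$ reductions of the cleared-denominator difference to vanish on enough $\F_{p^m}$-points that the degree bound leaves no room for a nonzero polynomial. Controlling these degrees uniformly in $i$, so that $p^m$ eventually exceeds them and Lemma~\ref{lem nonempty} applies, is the step I would handle most carefully.
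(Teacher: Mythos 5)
Your reduction to a single polydisc is sound and agrees with the first half of the paper's argument: on the polydisc $P_0$ meeting the given open set, $F-G$ is a bounded convergent power series vanishing on an open subset, hence vanishes on all of $P_0$. You are also right that the whole content of the theorem is the propagation to the other polydiscs. But your proposed propagation mechanism has a fatal gap, and it sits exactly at the step you flagged as the ``technical heart''. First, a unit polydisc $D_{u_1,1}\times\dots\times D_{u_n,1}$ contains exactly \emph{one} Teichm\"uller point, namely its center $(\tilde u_1,\dots,\tilde u_n)$ (distinct Teichm\"uller points are at distance $1$ from each other), and every point of $P_0$ reduces to the single point $(u_1,\dots,u_n)\in(\F_{p^m})^n$. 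So the congruences $H_i\equiv 0 \pmod{p^{N_i}}$ you extract from $F=G$ on $P_0$ constrain the reductions of the polynomials $H_i$ at \emph{one} point of $(\F_{p^m})^n$ only, and Lemma~\ref{lem nonempty} can never force a polynomial to vanish identically from vanishing at one point. Second, the degrees of the cleared-denominator differences $H_i$ are not bounded in $i$: in the situation the theorem is built for, the denominators are $\det A(s,\Phi_s(t,z))$, whose $z$-degree grows like $p^s$, so the condition ``$p^m$ exceeds $\deg H_i$'' fails for all large $i$ and cannot be arranged. Third, even the implicit inference ``uniformly $p$-adically small on the $\Q_p^{(m)}$-points of a polydisc $\Rightarrow$ small coefficients'' is false: $(z^{p^m}-z)^N$ has unit coefficients yet absolute value at most $p^{-N}$ at every point of $\Z_p^{(m)}$. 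This is precisely the phenomenon that prevents any elementary algebraic transfer of information from one polydisc to another.

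The paper's proof (due to Berkovich) supplies the missing bridge by a completely different, non-classical device: it works in the Berkovich affine space over $\Q_p^{(m)}$ and uses the Gauss point $g$, defined by $|\sum_\mu a_\mu T^\mu|_g=\max_\mu|a_\mu|$. The two key facts are that (1) this single point $g$ lies in the closure of \emph{every} open unit polydisc with integral center, and (2) every bounded convergent power series $f$ on such a polydisc satisfies $|f|_x\le |f|_g$ at all points $x$ of the polydisc. Consequently $F=G$ on the one polydisc $P_0$ forces $|F-G|_g=0$ by continuity of the extension to $g$ from inside $P_0$, and then $|F-G|_x\le|F-G|_g=0$ on every other polydisc of $\frak D$ simultaneously; no point counting over $\F_{p^m}$ and no degree bounds enter at all. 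In short, the clopen polydiscs, disconnected from one another at the level of $\Q_p^{(m)}$-points, are glued together by a single non-rational point adherent to all of them; some input of this kind is unavoidable, and your proposal contains no substitute for it.
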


The following proof was communicated to us by Vladimir Berkovich.

\begin{proof}

The domain $\frak D$ is a disjoint union of open unit polydiscs, and so it gives rise to a similar domain 
$\frak D’$ over the algebraic closure of $\Q_p^{(m)}$. 
Each rational function of our sequence   has no poles in $\frak D’$. 
This property implies that the restriction of the function to each of open unit
polydisc contained in $\frak D$ is a formal power series convergent at all points of the polydisc. 

First, recall the definition and some properties of the affine space $\mathbb A^n$
over a non-Archime\-dean field $\mathbb K$ (as $\Q_p^{(m)}$). As a space it is the set of 
all multiplicative seminorms $| \,\cdot\, |_x\colon \mathbb K[T_1,\allowbreak\dots,T_n] \to \R_+$ 
that extend the (multiplicative) valuation $|\,\cdot\,|\colon \mathbb K\to \R_+$, 
and it is provided with the weakest topology with respect to which all 
functions $\mathbb A^n\to \R_+\colon x \mapsto |f|_x$ for polynomials
 $f$ are continuous. We need only one point $g$, called the Gauss point 
 and defined as follows: $|\sum_\mu a_\mu T^\mu|_g = \max_\mu |a_\mu|$. One can show that
\begin{enumerate}
\item[(1)] the point $g$ lies in the closure of each open polydisc
 of radius one with center at a point $t\in \mathbb K^n$ with $|t_i|\le 1$, and 
\item[(2)] for each bounded convergent power series $f$ on such an open polydisc, the real valued function $x \mapsto |f|_x$ extends by continuity to the point $g$, and one has $|f|_x \le |f|_g$ for all points of the polydisc. 
\end{enumerate}
{\sl Uniqueness}: Since $F$ and $G$ are uniform limits of rational functions regular on $\frak D$, 
their restrictions to each open polydisc in $\frak D$  are bounded convergent power series and, 
in particular, the number $|F-G|_g$ is well defined and one has $|F-G|_x \le |F-G|_g$ for 
all points $x\in \frak D$. If $F(x)=G(x)$ for points from a nonempty open subset of an open
unit polydisc, 
then $F(x)=G(x)$ for all points of the polydisc (it is uniqueness property for convergent power series) 
and, therefore, $|F-G|_g=0$. This implies that $F=G$ on $\frak D$.
\end{proof}

\subsection{Domain of convergence}

By Theorem \ref{thm F ne} the polynomial $\det A(1, F(t,z))\in \Z[z]$ is  nonzero modulo $p$.
The polynomial $\det A(1, F(t,z))$ is a homogeneous polynomial in $z$ of degree
\bean
\label{deg d}
d = \frac{p-1}2 g^2\,,
\eean
cf.~\eqref{Deg}.
 Define
\bea
\frak D^{(m)}_{\on{KZ}}
= \{a \in (\Z_p^{(m)})^{n} \mid\  |\det A(1, F(t,a))|_p=1\}\,.
\eea
By Lemma \ref{lem nonempty} the domain
$\frak D^{(m)}_{\on{KZ}}$ is nonempty if $p^m> d$.
In what follows we assume that $p^m>d$.

\begin{rem}
The space $(\Z_{p^m})^n$ is the disjoint union of $p^{mn}$ unit polydiscs
$ D_{u_1,1}\times \dots \times D_{u_n,1}$.  By Lemma \ref{lem nonempty}
 at least $\frac{p^{mn}-1}{p^m-1} (p^m-1-d) + 1>p^{mn}\big(1-\frac d{p^m-1} \big)$ 
 of them belong to  $\frak D^{(m)}_{\on{KZ}}$.
So, as $m$ grows almost all polydiscs belong to  $\frak D^{(m)}_{\on{KZ}}$.

\end{rem}

We have
$\vert\det A(s, \Phi_{s}(t,a))\big\vert_p =1$ for $ a\in \frak D^{(m)}_{\on{KZ}}$.
All entries of $A(s, \Phi_{s}(t,z))^{-1}$ are
rational functions in $z$ regular on $\frak D^{(m)}_{\on{KZ}}$.  
For every $a\in\frak D^{(m)}_{\on{KZ}}$  all entries of $A(s, \Phi_{s}(t,a))$ and $A(s, \Phi_{s}(t,a))^{-1}$ 
are elements of $\Z_p^{(m)}$.

\begin{thm}
\label{thm coKZ}
The sequence of $g\times g$ matrices 
\bea
\big(A\big(s+1,  \Phi_{s}(t,z)\big)\cdot \si\big(A\big(s,  \Phi_{s-1}(t,z)\big)\big)^{-1}\big)_{s\geq 1}\,,
\eea
whose entries are rational functions in $z$ regular on  $\frak D^{(m)}_{\on{KZ}}$,
 uniformly converges on $\frak D^{(m)}_{\on{KZ}}$ as $s\to\infty$ to an analytic
$g\times g$ matrix which will be denoted by $\mc A(z)$.
For $a\in\frak D^{(m)}_{\on{KZ}}$ we have
\bea
\big\vert \det \mc A(a) \big\vert_p=1\,
\eea
and the matrix $\mc A(a)$ is invertible. 
\end{thm}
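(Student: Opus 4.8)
The plan is to reduce Theorem \ref{thm coKZ} to the general convergence machinery already established in Section \ref{sec 3}, since the tuple in question is precisely the constant tuple $(F(t,z),F(t,z),\dots)$. First I would observe that, by the Lemma preceding \eqref{DN} together with Corollary \ref{cor F ne}, this infinite constant tuple is $\Dl$-admissible and nondegenerate in the sense of Definition \ref{def F}: nondegeneracy is exactly the content of Theorem \ref{thm F ne}, which asserts that $\det A(1,F(t,z))$ is nonzero modulo $p$. Moreover the domain $\frak D^{(m)}_{\on{KZ}}$ is by definition the set where $|\det A(1,F(t,a))|_p=1$, which matches the domain $\frak D$ of Theorem \ref{thm conv} (here there is a single distinct polynomial $F_1=F$, so the conditions $|\det A(1,F_i)|_p=1$ collapse to the single condition defining $\frak D^{(m)}_{\on{KZ}}$).

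With these identifications in place, I would note that the sequence in the statement, namely
\[
\big(A\big(s+1,\Phi_s(t,z)\big)\cdot\si\big(A\big(s,\Phi_{s-1}(t,z)\big)\big)^{-1}\big)_{s\ge1},
\]
is exactly the sequence \eqref{sec of m} of Theorem \ref{thm conv} specialized to this tuple: indeed $W_s=\Phi_{s+1}$ and $W_s^{(1)}=\Phi_s^{(1)}$ for the constant tuple, so that $A(s+1,W_s)=A(s+1,\Phi_{s+1})$ and $A(s,W_s^{(1)})=A(s,\Phi_s)$. (One must be mildly careful with the index bookkeeping, since the paper writes the KZ sequence with a shift relative to the abstract statement, but matching $W_s(x)=\La_0\La_1^p\dots\La_s^{p^s}$ against $\Phi_s=F\cdot F^p\cdots F^{p^{s-1}}$ pins down the correspondence unambiguously.) Theorem \ref{thm conv} then delivers directly: the entries are rational functions regular on the domain, the sequence converges uniformly as $s\to\infty$ to an analytic $g\times g$ matrix with values in $\Z_p^{(m)}$, and for every $a$ in the domain the limit matrix satisfies $|\det\mc A(a)|_p=1$ and is therefore invertible.

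The verification is thus almost entirely a matter of checking that the hypotheses of the abstract theorems are met and that the notation aligns; the substantive analytic work—uniform convergence via the Dwork-type congruence of part (ii) of Theorem \ref{thm 1.6}, and the determinant estimate via part (i)—has already been carried out in Theorem \ref{thm conv}. The one genuine input specific to this section is Theorem \ref{thm F ne}, guaranteeing nondegeneracy, and the nonemptiness of $\frak D^{(m)}_{\on{KZ}}$ for $p^m>d$, which follows from Lemma \ref{lem nonempty}. I expect the main obstacle to be purely clerical rather than mathematical: ensuring the shift in the indexing of the matrices $A(s+1,\Phi_s)$ versus the abstract $A(s+1,W_s)$ is handled consistently, so that the specialization of \eqref{sec of m} really does reproduce the displayed KZ sequence without an off-by-one discrepancy. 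Once that bookkeeping is settled, the proof is a one-line appeal to Theorem \ref{thm conv}.
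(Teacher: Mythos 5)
Your proposal is correct and matches the paper's proof, which is a one-line appeal to Theorem \ref{thm conv}, with the nondegeneracy hypothesis supplied by Theorem \ref{thm F ne} (and Corollary \ref{cor F ne}) exactly as you describe. Your index bookkeeping is also the right reading: for the constant tuple $(F,F,\dots)$ one has $W_s=\Phi_{s+1}$ and $W_s^{(1)}=\Phi_s$, so the displayed sequence is the specialization of \eqref{sec of m}, i.e.\ it should be understood as $A(s+1,\Phi_{s+1})\cdot\si\big(A(s,\Phi_s)\big)^{-1}$ (the paper's display carries an off-by-one slip, since $A(s+1,\Phi_s)$ taken literally would vanish identically).
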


\begin{proof}
The theorem follows from Theorem \ref{thm conv}.
\end{proof}

\begin{thm}
\label{thm coKZ}
For $i=1,\dots,n$ the sequence of $g\times g$ matrices 
\bea
\Big(\Big(\frac{\der}{\der z_i}A\big(s,  \Phi_{s}(t,z)\big)\Big)\cdot A\big(s,  \Phi_{s}(t,z)\big)^{-1}\Big)_{s\geq 1}\,,
\eea
whose entries are rational functions in $z$ regular on  $\frak D^{(m)}_{\on{KZ}}$,
uniformly converges on $\frak D^{(m)}_{\on{KZ}}$ as $s\to\infty$ to an analytic
$g\times g$ matrix, which will be denoted by $\mc A^{(i)}(z)$.

The sequence of $n\times g$ matrices 
\bea
\big(I_s(z)\cdot A\big(s,  \Phi_{s}(t,z)\big)^{-1}\big)_{s\geq 1}\,,
\eea
whose entries are rational functions in $z$ regular on  $\frak D^{(m)}_{\on{KZ}}$,
uniformly converges on $\frak D^{(m)}_{\on{KZ}}$ as $s\to\infty$ to an analytic
$n\times g$ matrix which will be denoted by $\mc I(z)$.

For $i=1,\dots, n$ the sequence of $n\times g$ matrices 
\bea
\Big(\frac{\der I_s}{\der z_i}(z)\cdot A\big(s,  \Phi_{s}(t,z)\big)^{-1}\Big)_{s\geq 1}\,,
\eea
whose entries are rational functions in $z$ regular on  $\frak D^{(m)}_{\on{KZ}}$,
uniformly converges on $\frak D^{(m)}_{\on{KZ}}$ as $s\to\infty$ to an analytic
$n\times g$ matrix which will be denoted by $\mc I^{(i)}(z)$.

We have
\bea
\frac{\der \mc I}{\der z_i} = \mc I^{(i)} - \mc I \cdot \mc A^{(i)}\,.
\eea

\end{thm}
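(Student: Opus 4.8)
The plan is to dispatch the three convergence claims using the machinery of Sections~\ref{sec 3} and~\ref{sec 5}, and then to obtain the differential relation by passing the matrix product rule to the limit, exactly as in Lemma~\ref{lem conv3}.

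First I would record that the tuple $(F(t,z),F(t,z),\dots)$ is nondegenerate $\Dl$-admissible: admissibility is the lemma preceding Theorem~\ref{thm F ne}, and nondegeneracy is Theorem~\ref{thm F ne} together with Corollary~\ref{cor F ne}. Hence $\frak D=\frak D^{(m)}_{\on{KZ}}$ in the notation of Section~\ref{sec 3}, each $\det A(s,\Phi_s(t,z))$ is a unit on $\frak D^{(m)}_{\on{KZ}}$, and the inverses $A(s,\Phi_s(t,z))^{-1}$ are regular there. Convergence of $\bigl(\frac{\der}{\der z_i}A(s,\Phi_s)\cdot A(s,\Phi_s)^{-1}\bigr)_s$ to $\mc A^{(i)}$ follows from Theorem~\ref{thm der} with $m=0$: since $W_{s-1}=\Phi_s$ and $W_s=\Phi_{s+1}$ for this tuple, its congruences show the sequence is Cauchy modulo $p^s$ (this is the $m=0$ instance of Theorem~\ref{thm conv2}). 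For $\mc I$ and $\mc I^{(i)}$ I would quote Theorem~\ref{thm coS}: parts~(i) and~(ii) state precisely that $\bigl(I_s\cdot A(s,\Phi_s)^{-1}\bigr)_s$ and $\bigl(\frac{\der I_s}{\der z_j}\cdot A(s,\Phi_s)^{-1}\bigr)_s$ are Cauchy modulo $p^s$; as their entries are rational functions regular on $\frak D^{(m)}_{\on{KZ}}$, they converge uniformly there to analytic $n\times g$ matrices $\mc I$ and $\mc I^{(i)}$.

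For the identity, I would start from the exact, finite-level computation using $\der(A^{-1})=-A^{-1}\,\der(A)\,A^{-1}$:
\begin{equation*}
\frac{\der}{\der z_i}\bigl(I_s\,A(s,\Phi_s)^{-1}\bigr)
=\frac{\der I_s}{\der z_i}\,A(s,\Phi_s)^{-1}
-\bigl(I_s\,A(s,\Phi_s)^{-1}\bigr)\Bigl(\frac{\der A(s,\Phi_s)}{\der z_i}\,A(s,\Phi_s)^{-1}\Bigr).
\end{equation*}
As $s\to\infty$ the right-hand side tends to $\mc I^{(i)}-\mc I\,\mc A^{(i)}$ by the three convergences above, so everything reduces to identifying the limit of the left-hand side with $\frac{\der}{\der z_i}\mc I$.

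I expect this last point to be the main obstacle, since a $p$-adic uniform limit need not commute with differentiation. I would remove it by upgrading the convergences to coefficient-wise ones. On any unit polydisc contained in $\frak D^{(m)}_{\on{KZ}}$ the denominators $\det A(s,\Phi_s)$ are units, so after clearing denominators (as in the discussion preceding Theorem~\ref{thm 1.6}) and dividing back by these units, the congruences of Theorem~\ref{thm coS} become congruences modulo $p^s$ of the Teichmuller-centered power-series expansions, holding coefficient by coefficient. Because $\frac{\der}{\der z_i}$ sends the coefficient $c_\alpha$ of $(z-\tilde u)^\alpha$ to $\alpha_i c_\alpha$, it preserves congruences modulo $p^s$; hence $\frac{\der}{\der z_i}\bigl(I_s\,A(s,\Phi_s)^{-1}\bigr)$ also converges uniformly, with limit $\frac{\der}{\der z_i}\mc I$. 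Combining this with the previous paragraph gives $\frac{\der}{\der z_i}\mc I=\mc I^{(i)}-\mc I\,\mc A^{(i)}$, as claimed.
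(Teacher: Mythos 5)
Your proof is correct and runs on the same congruence machinery as the paper's, but it is organized differently at exactly the point where the paper is most terse. The paper's entire proof is the citation of Theorems \ref{thm conv2}, \ref{thm conv3} and Lemma \ref{lem conv3}; implicitly this means transferring those general Hasse--Witt statements to $I_s$ through the proportionality $\nabla A(s,z)=\frac{1-p^s}{2}\,I_s(z)$ established inside the proof of Theorem \ref{thm coS} (each row of $I_s$ is, up to the unit factor $\frac{2}{1-p^s}\to 2$, the first row of a matrix $D_i(A(s,\Phi_s))$), and then obtaining the identity $\frac{\der \mc I}{\der z_i}=\mc I^{(i)}-\mc I\cdot\mc A^{(i)}$ row by row from Lemma \ref{lem conv3}, $\der_u\mc A_v=\mc A_{u,v}-\mc A_v\mc A_u$. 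You instead quote Theorem \ref{thm coS} for the Cauchy property of $I_s\,A(s,\Phi_s)^{-1}$ and $\frac{\der I_s}{\der z_j}\,A(s,\Phi_s)^{-1}$ --- an equivalent route, since Theorem \ref{thm coS} is itself deduced from Theorems \ref{thm der} and \ref{thm der2} via that same proportionality --- and you prove the differential identity by the finite-level product rule applied directly to $I_s\,A(s,\Phi_s)^{-1}$ rather than by invoking Lemma \ref{lem conv3}. The genuine value added is your final paragraph: both in Lemma \ref{lem conv3} and in the present theorem the paper silently interchanges $\der/\der z_i$ with the $p$-adic uniform limit, and this interchange is not automatic when uniform convergence is tested only at $\Q_p^{(m)}$-points (for instance $(z^p-z)^s\to 0$ uniformly on $\Z_p$ while its coefficients do not tend to $0$, so pointwise-uniform and coefficient-wise convergence genuinely differ). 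Your observation that the congruences hold coefficient by coefficient for the Teichm\"uller-centered expansions --- because the cleared denominators $\det A(s,\Phi_s)$ are units on each polydisc of $\frak D^{(m)}_{\on{KZ}}$ --- and that such coefficient-wise congruences are preserved by $\der/\der z_i$, is exactly the justification needed, and it makes your write-up more complete than the paper's at this step.
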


\begin{proof}
The theorem follows from Theorems \ref{thm conv2},  \ref{thm conv3}, and Lemma \ref{lem conv3}.
\end{proof}

\begin{thm}
\label{thm KZ mc}

We have the following system of equations on $\frak D^{(m)}_{\on{KZ}}$\,:
\bea
\mc I^{(i)} = H_i \cdot \mc I, \qquad i=1,\dots, n,
\eea
where $H_i$ are the Gaudin Hamiltonians defined in \eqref{GH}.

\end{thm}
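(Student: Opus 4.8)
The plan is to start from the fact that each frame $I_s(z)$ solves the KZ system modulo $p^s$, to right-multiply the resulting congruence by the inverse Hasse--Witt matrix $A(s,\Phi_s(t,z))^{-1}$, and then to pass to the $p$-adic limit $s\to\infty$ on the domain $\frak D^{(m)}_{\on{KZ}}$. First I would fix a point $a\in\frak D^{(m)}_{\on{KZ}}$ whose coordinates $a_1,\dots,a_n$ have pairwise distinct reductions, so that the differences $a_i-a_j$ are $p$-adic units and the Gaudin Hamiltonians $H_i(a)$ of \eqref{GH} are honest matrices over $\Z_p^{(m)}$. By Theorem \ref{thm 7.3} the polynomial vector $I_s$ satisfies $\nabla_i^{\on{KZ}}I_s\equiv0\pmod{p^s}$, which upon evaluation at $a$ becomes
\[
\frac{\der I_s}{\der z_i}(a)\equiv H_i(a)\,I_s(a)\pmod{p^s},\qquad i=1,\dots,n.
\]

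The second step is to right-multiply by $A(s,\Phi_s(t,a))^{-1}$. Since $a\in\frak D^{(m)}_{\on{KZ}}$, the determinant $\det A(s,\Phi_s(t,a))$ is a unit and, as recorded before Theorem \ref{thm coKZ}, all entries of $A(s,\Phi_s(t,a))^{-1}$ lie in $\Z_p^{(m)}$; hence the multiplication preserves the congruence,
\[
\frac{\der I_s}{\der z_i}(a)\,A(s,\Phi_s(t,a))^{-1}\equiv H_i(a)\bigl(I_s(a)\,A(s,\Phi_s(t,a))^{-1}\bigr)\pmod{p^s}.
\]
By the convergence statements of Theorem \ref{thm coKZ}, the left-hand side tends to $\mc I^{(i)}(a)$ while $I_s(a)\,A(s,\Phi_s(t,a))^{-1}\to\mc I(a)$; as $H_i(a)$ is a fixed integral matrix, the right-hand side tends to $H_i(a)\,\mc I(a)$. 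Both sides of the congruence converge and their difference is divisible by $p^s$ for every $s$, so in the limit the two sides must coincide, giving $\mc I^{(i)}(a)=H_i(a)\,\mc I(a)$. Letting $i$ run over $1,\dots,n$ and $a$ over the domain yields the asserted system.

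The main obstacle is the careful bookkeeping at the boundary between ``congruence modulo $p^s$'' and ``equality in the limit,'' together with the denominators hidden in $H_i$. Concretely, the congruence $\nabla_i^{\on{KZ}}I_s\equiv0\pmod{p^s}$ of Theorem \ref{thm 7.3} is really a congruence of polynomials obtained after clearing the factors $z_i-z_j$, and the cleanest way to cover all of $\frak D^{(m)}_{\on{KZ}}$ uniformly is to retain those factors: the same limiting argument applied to the cleared congruence first produces the identity
\[
\prod_{k\ne i}(z_i-z_k)\,\mc I^{(i)}=\tfrac12\sum_{j\ne i}\Omega_{ij}\prod_{k\ne i,\,k\ne j}(z_i-z_k)\,\mc I
\]
valid on the whole domain, after which one divides by the unit $\prod_{k\ne i}(z_i-z_k)$ at points with pairwise distinct reductions to recover $\mc I^{(i)}=H_i\,\mc I$. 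The only delicate point is the legitimacy of pulling the fixed left factor $H_i(a)$ through the limit, which is immediate once one knows $H_i(a)$ is integral, i.e. that the relevant coordinate differences are $p$-adic units.
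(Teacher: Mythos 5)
Your proposal is correct and follows essentially the same route as the paper: the paper's entire proof is the one-line remark that the theorem is a corollary of Theorem \ref{thm 7.3}, and what you have written out --- right-multiplying the mod-$p^s$ KZ congruence for $I_s$ by the integral inverse Hasse--Witt matrix $A(s,\Phi_s(t,z))^{-1}$ and passing to the $p$-adic limit via the convergence statements of Theorem \ref{thm coKZ} --- is precisely the argument being invoked. Your additional care with the denominators $z_i-z_j$ (working with the cleared polynomial congruence on all of $\frak D^{(m)}_{\on{KZ}}$ and then dividing away from the diagonals) is a legitimate filling-in of detail that the paper leaves implicit.
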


\begin{proof}

The theorem is a corollary of Theorem \ref{thm 7.3}.
\end{proof}

\begin{cor}
\label{cor mc I 1}

For $a\in \frak D^{(m)}_{\on{KZ}}$ we have
\bea
\mc I(a) \equiv I_1(a)\cdot A\big(1,  \Phi_{1}(t,a)\big)^{-1}
\pmod{p}.
\eea

\end{cor}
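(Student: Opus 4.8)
The plan is to recognize Corollary~\ref{cor mc I 1} as the $s\to\infty$ specialization of Corollary~\ref{thm KZ mod p}, which already records the desired congruence at every finite level. Concretely, I would start from the congruence
\[
I_s(z)\cdot A\big(s, \Phi_s(t,z)\big)^{-1}
\equiv
I_1(z)\cdot A\big(1, \Phi_1(t,z)\big)^{-1}
\pmod{p}
\]
of Corollary~\ref{thm KZ mod p}, valid as a congruence of matrices of rational functions regular on $\frak D^{(m)}_{\on{KZ}}$, specialize it at a point $a\in\frak D^{(m)}_{\on{KZ}}$, and then pass to the $p$-adic limit.

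First I would justify the specialization. For $a\in\frak D^{(m)}_{\on{KZ}}$ the value $\det A(s,\Phi_s(t,a))$ is a $p$-adic unit for every $s\geq1$, as recorded just before Theorem~\ref{thm coKZ}. Interpreting the congruence above in the sense explained before Theorem~\ref{thm 1.6} (namely, as a polynomial congruence obtained after multiplying through by the mod-$p$ invertible determinants), I may evaluate the underlying polynomials at $a$ and divide back by the unit values $\det A(s,\Phi_s(t,a))$ of absolute value~$1$; this yields
\[
I_s(a)\cdot A\big(s, \Phi_s(t,a)\big)^{-1}
\equiv
I_1(a)\cdot A\big(1, \Phi_1(t,a)\big)^{-1}
\pmod{p}
\]
in $\Z_p^{(m)}$, for all $s\geq1$. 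Write $M := I_1(a)\cdot A(1,\Phi_1(t,a))^{-1}$ for the fixed right-hand side.

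Next I would pass to the limit. By Theorem~\ref{thm coKZ} the sequence $\big(I_s(a)\cdot A(s,\Phi_s(t,a))^{-1}\big)_{s\geq1}$ converges $p$-adically to $\mc I(a)$. Each of its terms lies, entrywise, in the residue class of $M$ modulo $p$, i.e.\ in the closed ball $\{y\in\Z_p^{(m)} : |y-M_{ij}|_p\leq 1/p\}$ about the corresponding entry $M_{ij}$. Since in a non-archimedean field such a residue class is closed (indeed both open and closed) and hence stable under $p$-adic limits, the limit $\mc I(a)$ lies in the same class, so $\mc I(a)\equiv M\pmod{p}$, which is exactly the assertion.

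There is no genuine obstacle here: the statement is an immediate corollary of the finite-level congruence together with the convergence theorem. The only point worth flagging is the elementary but essential fact that reduction modulo $p$ is continuous on $\Z_p^{(m)}$, so that a mod-$p$ congruence holding along a $p$-adically convergent sequence is automatically inherited by its limit.
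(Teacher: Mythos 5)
Your proof is correct and follows exactly the paper's route: the paper likewise deduces the corollary by combining the finite-level congruence of Corollary~\ref{thm KZ mod p} with the convergence statement of Theorem~\ref{thm coKZ}. Your write-up merely makes explicit the two details the paper leaves implicit (that evaluation at $a$ is legitimate because the determinants are $p$-adic units, and that mod-$p$ residue classes are closed under $p$-adic limits), which is a faithful expansion of the same argument.
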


\begin{proof} 

The corollary follows from Corollary \ref{thm KZ mod p}
 and Theorem \ref{thm coKZ}.
\end{proof}

\subsection{Vector bundle $\mc L  \,\to\,  \frak D^{(m),o}_{\on{KZ}}$} 

Denote 
\bea
W=\{(I_1,\dots,I_n)\in (\Q_p^{(m)})^n\ |\ I_1+\dots+I_n=0\}.
\eea
  We consider vectors
$(I_1,\dots,I_n)$ as column vectors. 
The differential operators $\nabla^{\on{KZ}}_i$, $i=1,\dots,n$, define a connection on
the trivial bundle $W\times \frak D^{(m)}_{\on{KZ}} \to \frak D^{(m)}_{\on{KZ}}$,
called the KZ connection. The connection has singularities at the diagonal hyperplanes in $(\Z_p^{(m)})^n$
 and is well-defined over 

\bea
\frak D^{(m),o}_{\on{KZ}} = \{ a=(a_1,\dots,a_n)\in(\Z_p^{(m)})^n \mid |\det A(1,F(t,a))|_p=1, \, a_i\ne a_j\,\ \forall i,j\}. 
\eea

\vsk.2>
\noindent
The KZ connection is flat, 
\bea
\big[\nabla^{\on{KZ}}_i, \nabla^{\on{KZ}}_j\big]=0 \qquad \forall\,i,j,
\eea
see \cite{EFK}.
The flat sections of the KZ connection are solutions of  system  \eqref{KZ} of KZ equations.
 
\vsk.2>
For any $a\in \frak D^{(m)}_{\on{KZ}}$ let $\mc L_a \subset W$ be the vector subspace generated by columns of the 
$n\times g$ matrix $\mc I(a)$. Then
\bea
\mc L := \bigcup\nolimits_{a\in  \frak D^{(m)}_{\on{KZ}}}\,\mc L_a \,\to\,  \frak D^{(m)}_{\on{KZ}}
\eea
is an analytic distribution of vector subspaces  in the fibers of the trivial bundle
$W\times \frak D^{(m)}_{\on{KZ}} \to \frak D^{(m)}_{\on{KZ}}$.

\begin{thm}
\label{thm inv} 

The distribution $\mc L  \,\to\,  \frak D^{(m)}_{\on{KZ}}$   is invariant with respect to the KZ connection.
In other words, if $s(z)$ is a local section of $\mc L  \,\to\,  \frak D^{(m)}_{\on{KZ}}$, then the
sections $\nabla_i^{\on{KZ}} s(z)$, $i=1,\dots,n$, also are sections of $\mc L  \,\to\,  \frak D^{(m)}_{\on{KZ}}$.

\end{thm}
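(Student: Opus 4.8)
The plan is to reduce the invariance of $\mc L$ to a single matrix identity obtained by combining the two results just proved. First I would record that identity. The last formula of Theorem~\ref{thm coKZ} reads $\frac{\der\mc I}{\der z_i}=\mc I^{(i)}-\mc I\cdot\mc A^{(i)}$, while Theorem~\ref{thm KZ mc} gives $\mc I^{(i)}=H_i\cdot\mc I$. Substituting the second into the first yields, on the locus $\frak D^{(m),o}_{\on{KZ}}$ where the Gaudin Hamiltonians $H_i$ are regular,
\begin{equation}
\label{key}
\frac{\der\mc I}{\der z_i}=H_i\,\mc I-\mc I\,\mc A^{(i)},\qquad i=1,\dots,n.
\end{equation}
This says precisely that in the frame provided by the columns of $\mc I$ the KZ connection becomes the connection $d-\mc A^{(i)}\,dz_i$, which is the conceptual reason for the invariance.

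Next I would express an arbitrary local section of $\mc L$ through this frame. Since $\mc L_a$ is by construction the column span of the analytic $n\times g$ matrix $\mc I(a)$, any local section $s(z)$ of $\mc L\to\frak D^{(m),o}_{\on{KZ}}$ can be written near a given point as $s(z)=\mc I(z)\,c(z)$ for some analytic $(\Q_p^{(m)})^g$-valued function $c(z)$. Applying the connection and invoking \eqref{key} gives
\begin{align*}
\nabla_i^{\on{KZ}}s
&=\frac{\der}{\der z_i}\big(\mc I\,c\big)-H_i\,\mc I\,c
=\frac{\der\mc I}{\der z_i}\,c+\mc I\,\frac{\der c}{\der z_i}-H_i\,\mc I\,c
\\
&=\big(H_i\,\mc I-\mc I\,\mc A^{(i)}\big)c+\mc I\,\frac{\der c}{\der z_i}-H_i\,\mc I\,c
=\mc I\Big(\frac{\der c}{\der z_i}-\mc A^{(i)}\,c\Big).
\end{align*}
The two copies of $H_i\,\mc I\,c$ cancel, and the right-hand side is again of the form $\mc I(z)\cdot(g\text{-vector})$, hence lies in $\mc L_z$ for every $z$. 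Thus each $\nabla_i^{\on{KZ}}s$ is a section of $\mc L$, which is the assertion.

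I would also check the two standing compatibilities so that the computation takes place inside $W$. Each column of $\mc I$ lies in $W$: the $p^s$-hypergeometric solutions satisfy $I_{s,\ell,1}+\dots+I_{s,\ell,n}\equiv0\pmod{p^s}$, so multiplying by $A(s,\Phi_s)^{-1}$ (which has $\Z_p^{(m)}$-entries on the domain) and passing to the uniform limit defining $\mc I$ kills the total component sum. Moreover $H_i$ maps into $W$, because every $\Omega_{ij}$ has all its column sums equal to zero, whence $\sum_k(\Omega_{ij}I)_k=0$ for all $I$; together with the fact that $\tfrac{\der}{\der z_i}$ preserves the sum-zero condition, this shows $\nabla_i^{\on{KZ}}$ acts on $W$-valued sections.

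I expect the genuine obstacle to be not the algebra above\,---\,the cancellation is immediate once \eqref{key} is in hand\,---\,but the justification of the representation $s=\mc I\,c$ with analytic $c$. This requires $\mc L$ to be an honest analytic subbundle, i.e.\ that $\mc I(z)$ have locally constant rank on $\frak D^{(m),o}_{\on{KZ}}$; establishing that this rank equals $g$ is exactly the content of the subsequent rank statement, Theorem~\ref{thm rk g}, and it is what makes the phrase ``distribution of vector subspaces'' meaningful here. Granting constant rank, the argument is complete, and \eqref{key} moreover identifies the induced connection on $\mc L$ explicitly as $d-\mc A^{(i)}\,dz_i$.
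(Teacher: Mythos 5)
Your proof is correct and follows essentially the same route as the paper: the paper likewise writes a local section as $s=\mc I\cdot c$, substitutes $\frac{\der\mc I}{\der z_i}=\mc I^{(i)}-\mc I\cdot\mc A^{(i)}$ and $\mc I^{(i)}=H_i\cdot\mc I$, and cancels the $H_i\,\mc I\,c$ terms to land back in the column span of $\mc I$. Your extra remarks (that $H_i$ preserves $W$, and that representing an arbitrary section as $\mc I\,c$ with analytic $c$ implicitly uses local constancy of rank, settled later in Theorem~\ref{thm rk}) are sensible additions that the paper's own proof leaves tacit.
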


\begin{proof} 
Let $\mc I(z)= (\mc I_1(z), \dots, \mc I_g(z))$ be columns of the $n\times g$ matrix 
$\mc I(z)$.
Let $a\in \frak D^{(m)}_{\on{KZ}}$. Let $c(z) = (c_1(z),\dots,c_g(z))$ be a column vector of analytic functions at $a$.
Consider a local section of the distribution  $\mc L  \,\to\,  \frak D^{(m)}_{\on{KZ}}$,
\bea
s(z)\ =\  \sum_{j=1}^g \,c_j(z) \mc I_j(z) \  = : \ \mc I \cdot c .
\eea
Then
\bea
\nabla^{\on{KZ}}_i s(z) 
&=&
 - H_i\cdot \mc I\cdot c +\frac{\der \mc I}{\der z_i} \cdot c 
+ \mc I \cdot \frac{\der c}{\der z_i}
\\
&=&
 - H_i\cdot \mc I\cdot c +( \mc I^{(i)} - \mc I \cdot \mc A^{(i)})\cdot c 
+ \mc I \cdot \frac{\der c}{\der z_i}
\\
&=&
 - H_i\cdot \mc I\cdot c +( H_i \cdot \mc I  - \mc I \cdot \mc A^{(i)})\cdot c 
+ \mc I \cdot \frac{\der c}{\der z_i}
\\
&=&
  - \mc I \cdot \mc A^{(i)} \cdot c 
+ \mc I \cdot \frac{\der c}{\der z_i} \,.
\eea
Clearly, the last expression is a local section of $\mc L  \,\to\,  \frak D^{(m)}_{\on{KZ}}$.
\end{proof}

\begin{thm}
\label{thm rk}

The function $a \mapsto \dim_{\Q_p^{(m)}} \mc L_a$ is  constant on
$\frak D^{(m), o}_{\on{KZ}}$, in other words,  
$\mc L  \,\to\,  \frak D^{(m)}_{\on{KZ}}$  is a vector bundle over $\frak D^{(m),o}_{\on{KZ}}\subset \frak D^{(m)}_{\on{KZ}}$.

\end{thm}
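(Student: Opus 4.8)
The plan is to realize $\mc L$ locally as the image of an analytic frame whose rank cannot drop, and then to promote this local constancy to a single value over all of $\frak D^{(m),o}_{\on{KZ}}$ by means of Berkovich's uniqueness Theorem \ref{thm U}. The basic relation I would use is obtained by combining the derivative formula of Theorem \ref{thm coKZ} with Theorem \ref{thm KZ mc}:
\[
\nabla^{\on{KZ}}_i\mc I \;=\; \frac{\der\mc I}{\der z_i}-H_i\mc I \;=\;\big(\mc I^{(i)}-\mc I\,\mc A^{(i)}\big)-H_i\mc I\;=\;-\,\mc I\,\mc A^{(i)},\qquad i=1,\dots,n,
\]
so that $\mc I$ is a horizontal frame for $\mc L$ up to the right action of the matrices $\mc A^{(i)}$, which are analytic and everywhere defined on $\frak D^{(m)}_{\on{KZ}}$.

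First I would check that the connection matrices $\mc A^{(i)}$ are \emph{integrable}, i.e.\ $\frac{\der\mc A^{(i)}}{\der z_j}-\frac{\der\mc A^{(j)}}{\der z_i}+[\mc A^{(i)},\mc A^{(j)}]=0$. For each fixed $s$ the matrix $\frac{\der}{\der z_i}A(s,\Phi_s)\cdot A(s,\Phi_s)^{-1}$ is a pure logarithmic derivative of $G=A(s,\Phi_s)$, hence satisfies the Maurer--Cartan identity identically on $\frak D^{(m)}_{\on{KZ}}$. Since these matrices converge uniformly to $\mc A^{(i)}$ as $s\to\infty$ by Theorem \ref{thm coKZ}, and uniform convergence of bounded analytic functions on the polydiscs forces convergence of their first derivatives on slightly smaller polydiscs, the Maurer--Cartan identity passes to the limit, yielding the integrability.

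Next comes local constancy. Fix $b\in\frak D^{(m),o}_{\on{KZ}}$. Away from the diagonals the Gaudin Hamiltonians $H_i$ are analytic near $b$, so on a small enough polydisc $D\ni b$ the integrable system $\frac{\der R}{\der z_i}=\mc A^{(i)}R$ has an invertible $g\times g$ analytic solution $R$, and $\nabla^{\on{KZ}}$ admits parallel transport. Setting $\mc J:=\mc I\,R$ one computes $\nabla^{\on{KZ}}_i\mc J=(\nabla^{\on{KZ}}_i\mc I)R+\mc I\frac{\der R}{\der z_i}=-\mc I\mc A^{(i)}R+\mc I\mc A^{(i)}R=0$, so the columns of $\mc J$ are horizontal. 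Parallel transport then carries $\mc J(a)$ to $\mc J(a')$ for $a,a'\in D$, whence $\rank\mc J$ is constant on $D$; since $R$ is invertible, $\rank\mc I(a)=\rank\mc J(a)$ is constant on $D$. Thus $a\mapsto\dim\mc L_a$ is locally constant, so each level set $\{a:\rank\mc I(a)=k\}$ is open and $\mc L$ is locally free.

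Finally I would pin down a single value. Let $r$ be the maximal rank attained, realized on a nonempty open $U\subset\frak D^{(m),o}_{\on{KZ}}$. Every $(r+1)\times(r+1)$ minor of $\mc I$ is analytic on $\frak D^{(m)}_{\on{KZ}}$, being a uniform limit of the corresponding minors of the defining sequence, and it vanishes on $U$; by Theorem \ref{thm U} it vanishes on all of $\frak D^{(m)}_{\on{KZ}}$, so $\rank\mc I\le r$ everywhere. Conversely, choose an $r\times r$ minor $\mu_0$ with $\mu_0\not\equiv0$ on $U$. If $\rank\mc I(b)<r$ for some $b$, then by local constancy $\rank\mc I<r$ on an entire polydisc about $b$, on which all $r\times r$ minors, in particular $\mu_0$, vanish; Theorem \ref{thm U} forces $\mu_0\equiv0$ on $\frak D^{(m)}_{\on{KZ}}$, contradicting $\mu_0\not\equiv0$ on $U$. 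Hence $\rank\mc I\equiv r$. The main obstacle is exactly this globalization: because the unit polydiscs making up $\frak D^{(m),o}_{\on{KZ}}$ are totally disconnected in the $p$-adic topology, local constancy alone does not fix one value, and it is Berkovich's Theorem \ref{thm U}, propagating the vanishing of a minor through the Gauss point, that does the gluing; a secondary technical point is the convergence of first derivatives needed to transport the Maurer--Cartan identity to the limit matrices $\mc A^{(i)}$.
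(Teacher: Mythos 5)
Your proof follows the same two-step skeleton as the paper: (1) local constancy of $a\mapsto\dim_{\Q_p^{(m)}}\mc L_a$ via horizontality, (2) globalization through Theorem \ref{thm U} by propagating the vanishing of a minor (your final paragraph is a correctly fleshed-out version of the paper's one-sentence globalization, including the key observation that total disconnectedness is exactly why Theorem \ref{thm U} is indispensable). The difference lies in how local constancy is obtained. The paper argues abstractly: the ambient KZ connection is flat (a classical fact), so near a point off the diagonals the local solution space is full-dimensional, and any connection-invariant distribution --- $\mc L$ is invariant by Theorem \ref{thm inv} --- is locally of the form $b\mapsto\{y(b)\mid y\in Y\}$ for a fixed subspace $Y$ of that solution space, hence of constant rank. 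You instead build an explicit horizontal frame $\mc J=\mc I R$ by solving the auxiliary $g\times g$ system $\der R/\der z_i=\mc A^{(i)}R$, which forces you to verify integrability of the matrices $\mc A^{(i)}$ --- a step the paper never needs, since it uses only the flatness of $\nabla^{\on{KZ}}$ itself. Your route is more computational and yields a concrete flat frame for $\mc L$; both routes rely equally on the (tacitly assumed in the paper as well) local solvability of integrable $p$-adic analytic linear systems on sufficiently small polydiscs.

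The one step that does not stand as written is your justification of integrability by ``passing Maurer--Cartan to the limit'', which rests on the claim that uniform convergence of bounded analytic functions on $p$-adic polydiscs forces convergence of their first derivatives on slightly smaller polydiscs. This Cauchy-estimate reflex from complex analysis is unsound here: uniform convergence on $\frak D^{(m)}_{\on{KZ}}$ means sup-norm convergence over $(\Z_p^{(m)})^n$-points, and sup-norm smallness on the integer points of a polydisc does not control coefficients $p$-adically (for instance $(z^{p^m}-z)^s$ has sup norm $p^{-s}$ on $\Z_p^{(m)}$ while its Gauss norm is $1$), so no general derivative estimate can be extracted from uniform convergence alone. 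Fortunately the gap is closable with results already in the paper: the convergence statements there come from coefficient-wise congruences (Theorems \ref{thm der} and \ref{thm der2}), not from mere sup-norm convergence, and Lemma \ref{lem conv3} together with Theorem \ref{thm conv3} gives
\begin{equation*}
\frac{\der \mc A^{(i)}}{\der z_j} \;=\; \mc A_{j,i} - \mc A^{(i)}\mc A^{(j)},
\qquad
\mc A_{i,j}=\mc A_{j,i},
\end{equation*}
whence your identity
$\frac{\der\mc A^{(i)}}{\der z_j}-\frac{\der\mc A^{(j)}}{\der z_i}+\big[\mc A^{(i)},\mc A^{(j)}\big]=0$
follows by subtraction, with no limiting argument at all. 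With that substitution your proof is complete and of the same strength as the paper's.
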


\begin{proof}

First, we prove that  the  function $a \mapsto \dim_{\Q_p^{(m)}} \mc L_a$ is  locally constant on
$\frak D^{(m),o}_{\on{KZ}}$. This  holds true in the following more general  setting.
Let $k$ be a positive integer,  $a\in (\Z_p^{(m)})^n$. Let 
$B_i(z)$, $i=1,\dots,n$, be $k\times k$ matrices
defined and  analytic in a neighborhood of $a$.
The differential operators $\mc B_i = \frac{\der}{\der z_i} - B_i(z)$,
$i=1,\dots,n$,\, 
act on $(\Q_p^{(m)})^k$-valued functions defined and analytic in a neighborhood of $a$.
The  operators  $(\mc B_i)$ define a connection $\nabla^{\mc B}$ on the restriction of the trivial bundle 
$(\Q_p^{(m)})^k\times (\Q_p^{(m)})^n\to (\Q_p^{(m)})^n$ to  a neighborhood of $a$.  Assume that the connection is flat,
$[\mc B_i,\mc B_j]=0$ for all $i,j$.  Then, for a sufficiently  small neighborhood $D$ of $a$, the
space $\mc S$ of solutions of the system
$\mc B_i y =0$, $i=1,\dots,n$, on $D$ is a $k$-dimensional $\Q_p^{(m)}$-vector space.
For any $b\in D$ the values of solutions at $b$ span $(\Q_p^{(m)})^k$.
Under these assumptions,\,  the $\nabla^{\mc B}$-invariant  subspace distributions in fibers of 
$(\Q_p^{(m)})^k\times (\Z_p^{(m)})^n\to (\Z_p^{(m)})^n$ over $D$ are labeled by
$\Q_p$-vector subspaces $Y\subset \mc S$. The corresponding  distribution assigns to $b\in D$
the subspace $\{ y(b)\mid y\in Y\}\subset (\Q_p^{(m)})^k$. Such distributions have constant rank.
Hence the  function $a \mapsto \dim_{\Q_p^{(m)}} \mc L_a$ is  locally constant on
$\frak D^{(m),o}_{\on{KZ}}$. 

By Theorem \ref{thm U} the locally constant function $a \mapsto \dim_{\Q_p^{(m)}} \mc L_a$  cannot 
take more than one value on $\frak D^{(m),o}_{\on{KZ}}$ since  the dimension 
$\dim_{\Q_p^{(m)}} \mc L_a$ may drop from its maximal value only on 
a proper analytic subset of $\frak D^{(m)}_{\on{KZ}}$. The theorem is proved.
\end{proof}

Recall that $d$ is the degree of the polynomial $\det A(1, F(t,z))$, see \eqref{deg d}.

\begin{thm}
\label{thm rk g}

If $p^m> 2d$, then the  analytic vector bundle $\mc L\to \frak D^{(m),o}_{\on{KZ}}$ is of rank $g$.

\end{thm}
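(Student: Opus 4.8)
The plan is to prove that the rank $g$ bound $\dim_{\Q_p^{(m)}}\mc L_a=g$ holds by exhibiting a single point $a\in\frak D^{(m),o}_{\on{KZ}}$ at which the columns $\mc I_1(a),\dots,\mc I_g(a)$ of the matrix $\mc I(a)$ are linearly independent over $\Q_p^{(m)}$; since Theorem \ref{thm rk} already guarantees that $a\mapsto\dim_{\Q_p^{(m)}}\mc L_a$ is constant on $\frak D^{(m),o}_{\on{KZ}}$, any single point where the rank is maximal (namely $g$) forces the rank to be $g$ everywhere. Because $\mc I(a)$ is the uniform limit of $I_s(z)\cdot A(s,\Phi_s(t,z))^{-1}$ and $A(s,\Phi_s(t,a))$ is invertible over $\Z_p^{(m)}$ on the domain, the rank of $\mc I(a)$ over $\Q_p^{(m)}$ equals the rank of $\mc I(a)\cdot\mc A(a)$, which by Corollary \ref{cor mc I 1} reduces the question modulo $p$ to the rank of $I_1(a)\pmod p$.

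First I would make precise the reduction to $I_1$. By Corollary \ref{cor mc I 1} we have $\mc I(a)\equiv I_1(a)\cdot A(1,\Phi_1(t,a))^{-1}\pmod p$, and since $A(1,\Phi_1(t,a))$ is invertible modulo $p$ for $a\in\frak D^{(m)}_{\on{KZ}}$, the $\F_{p^m}$-rank of the reduction $\bar{\mc I}(a)$ equals the $\F_{p^m}$-rank of the reduction $\bar I_1(a)$. Thus it suffices to find a point $a$ in $\frak D^{(m),o}_{\on{KZ}}$ at which the $n\times g$ matrix $\bar I_1(a)$ over $\F_{p^m}$ has rank $g$; lifting back, the columns of $\mc I(a)$ are then $\Q_p^{(m)}$-linearly independent because a matrix whose reduction mod $p$ has rank $g$ has rank at least $g$ over the fraction field, and $g$ is the number of columns.

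Next I would analyze $\bar I_1(z)$ as an explicit polynomial matrix. Recall that $I_{1,\ell,i}(z)$ is the coefficient of $t^{\ell p-1}$ in $\Phi_1(t,z)/(t-z_i)$, where $\Phi_1(t,z)=((t-z_1)\cdots(t-z_n))^{(p-1)/2}$, and consider the $g\times g$ minor formed by the first $g$ rows, which is essentially the Hasse--Witt matrix $A(1,F(t,z))$ studied in Theorem \ref{thm F ne}. By that theorem $\det A(1,F(t,z))$ is a nonzero polynomial modulo $p$ of degree $d=\tfrac{p-1}{2}g^2$. The key point is that the locus where this $g\times g$ minor vanishes is contained in a proper subvariety, so there exist points where $\bar I_1(a)$ has full column rank $g$; moreover such a point can be chosen to avoid the diagonals $a_i=a_j$, placing it in $\frak D^{(m),o}_{\on{KZ}}$ rather than merely $\frak D^{(m)}_{\on{KZ}}$.

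The main obstacle, and the reason the hypothesis $p^m>2d$ appears, is a counting argument guaranteeing such a point exists over $\F_{p^m}$ while simultaneously satisfying $\det A(1,F(t,a))\ne0$ and $a_i\ne a_j$ for all $i,j$. I would apply Lemma \ref{lem nonempty} to the product of the nonzero polynomial $\det A(1,F(t,z))$ (of degree $d$) with the Vandermonde-type discriminant $\prod_{i<j}(z_i-z_j)$, or estimate each separately: the bad locus for $\det A(1,F)$ plus the bad locus for the diagonals together have degree controlled by a multiple of $d$, and $p^m>2d$ ensures the number of $\F_{p^m}$-points where \emph{both} conditions hold is strictly positive by the point-count lower bound $\frac{p^{mn}-1}{p^m-1}(p^m-1-\deg)+1$ of Lemma \ref{lem nonempty}. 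At such a point $a$ we take its Teichm\"uller (or any) lift into $(\Z_p^{(m)})^n$, conclude $a\in\frak D^{(m),o}_{\on{KZ}}$ with $\bar I_1(a)$ of rank $g$, and hence $\dim_{\Q_p^{(m)}}\mc L_a=g$, which by the constancy of Theorem \ref{thm rk} gives rank $g$ throughout $\frak D^{(m),o}_{\on{KZ}}$.
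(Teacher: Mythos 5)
Your outline follows the paper's proof closely in its architecture: reduce via Corollary \ref{cor mc I 1} and the invertibility of $A(1,\Phi_1(t,a))$ to finding a $g\times g$ minor of the polynomial matrix $I_1(z)$ that is nonzero modulo $p$, locate an $\F_{p^m}$-point where that minor and $\det A(1,F(t,z))$ are simultaneously nonzero via Lemma \ref{lem nonempty}, and finish with the rank constancy of Theorem \ref{thm rk}. But the key step has a genuine gap: your claim that the minor of $I_1(z)$ on the first $g$ rows ``is essentially the Hasse--Witt matrix $A(1,F(t,z))$'' is false, so Theorem \ref{thm F ne} does not give you a nonzero minor of $\bar I_1(z)$. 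Indeed, $I_{1,\ell,i}(z)=\Cf_{\ell p-1}\big(\Phi_1(t,z)/(t-z_i)\big)=\tfrac{2}{1-p}\,\tfrac{\der}{\der z_i}\Cf_{\ell p-1}(\Phi_1(t,z))$ (this is the identity $\nabla A(s,z)=\tfrac{1-p^s}{2}I_s(z)$ used in the proof of Theorem \ref{thm coS}): the row index of $I_1$ marks which variable is differentiated, whereas the row index $u$ of $A(1,F)_{u,v}=\Cf_{pv-u}(\Phi_1)$ shifts the exponent of $t$. These matrices are genuinely different; already for $g=1$ their entries have different degrees in $z$ ($\tfrac{p-1}{2}-1$ versus $\tfrac{p-1}{2}$). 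This is exactly the point where the paper must import an outside input: it cites \cite[Lemma 7.2]{V5} (see also \cite[Lemma 6.1]{V4}) to exhibit the leading term of the $g\times g$ minor of $I_1(z)$ in rows $1,3,\dots,2g-1$, which shows that this minor is nonzero mod $p$ and has degree $g^2\tfrac{p-1}{2}-\tfrac{g(g+1)}{2}<d$. Without such an input your argument never establishes that $\bar I_1(z)$ has rank $g$ anywhere.

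A second, smaller problem is the counting. The paper applies Lemma \ref{lem nonempty} in effect to the product of two polynomials of degree $\le d$ (the explicit minor and $\det A(1,F)$), which is why $p^m>2d$ suffices. Your version multiplies in the discriminant $\prod_{i<j}(z_i-z_j)$, of degree $g(2g+1)$, and then $p^m>2d$ no longer guarantees a good point: for $g=1$, $p=5$, $m=1$ one has $p^m=5>2d=4$, but your product has degree at least $5$, so the lower bound of Lemma \ref{lem nonempty} is vacuous. The fix is that the Vandermonde factor is unnecessary: by Corollary \ref{cor mc I 1} the reduction mod $p$ of the relevant minor of $\mc I$ is constant on each residue polydisc contained in $\frak D^{(m)}_{\on{KZ}}$, so once it is a unit at the Teichm\"uller lift of a good $\F_{p^m}$-point you may perturb the coordinates within that polydisc to make them pairwise distinct; this produces a point of $\frak D^{(m),o}_{\on{KZ}}$ where the rank equals $g$, and Theorem \ref{thm rk} then gives rank $g$ on all of $\frak D^{(m),o}_{\on{KZ}}$.
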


\begin{proof}
First we show that there is a $g\times g$ minor of the $n\times g$ matrix valued function
$\mc I(z)$ which is nonzero on $\frak D^{(m)}_{\on{KZ}}$. By Corollary \ref{cor mc I 1} this fact holds true  
if there is a $g\times g$ minor of the $n\times g$ matrix 
$I_1(z)\cdot A\big(1,  \Phi_{1}(t,z)\big)^{-1}$,  which defines a function
on $\frak D^{(m)}_{\on{KZ}}$ nonzero modulo $p$. 
Since   $|\det A\big(1,  \Phi_{1}(t,a)\big)|_p=1$ for any $a\in \frak D^{(m)}_{\on{KZ}}$,
 it is enough to prove that there is a nonzero $g\times g$  minor of the $n\times g$ polynomial matrix
$I_1(z)$. But this fact follows from \cite[Lemma 7.2]{V5}, also see \cite[Lemma 6.1]{V4}. 
More precisely, \cite[Lemma 7.2]{V5} implies that the 
leading term of the $g\times g$ minor in rows $1,3,\dots,2g-1$ equals 
\bea
\pm \prod_{l=1}^g\binom{(p-1)/2}{l} z_1^{(p-1)/2}\dots z_{2g-2l}^{(p-1)/2}z_{2g-2l+1}^{{(p-1)/2}-l}\,.
\eea
The degree of this minor equals
\bea
g(2g+1)\frac{p-1}2 - p(1+\dots+ g) = g^2\frac{p-1}2-\frac{ g(g+1)}2 \ <\  g^2\frac{p-1}2 =d.
\eea
Thus, we have two polynomials of degree $\leq d$: this minor and 
$\det A\big(1,  \Phi_{1}(t,z)\big)$. Both polynomials are nonzero modulo $p$. 
By Lemma \ref{lem nonempty} if $p^m> 2d$, then this minor is nonzero on 
$\frak D^{(m),o}_{\on{KZ}}$.
\end{proof}

\subsection{Remarks}
\subsubsection{}
It was shown in \cite[Section 10.4]{V5} that the span of columns of the
$n\times g$ matrix $I_s(z)$ has a $p$-adic limit as $s\to\infty$ when $z$ belongs to one of the asymptotic zones
of the KZ equations. The limit is a $g$-dimensional  space of power series solutions of the KZ equations with respect to
the coordinates attached to that asymptotic zone.
It is not clear yet if that asymptotic zone belongs to $\frak D^{(m)}_{\on{KZ}}$.

\subsubsection{} One may expect that the subbundle $\mc L\to \frak D^{(m),o}_{\on{KZ}}$
can be extended to a rank $g$ subbundle over 
$\frak D^{(m)}_{\on{KZ}} - \frak D^{(m),o}_{\on{KZ}}$, the union
 of the diagonal hyperplanes in 
$ \frak D^{(m)}_{\on{KZ}}$.

 \subsubsection{}
 Following Dwork we may expect that locally at any point $a\in\frak \frak D^{(m),o}_{\on{KZ}}$, the
 solutions of the KZ equations with values in $\mc L\to \frak D^{(m),o}_{\on{KZ}}$
 are given at $a$ by power series in 
$z_i-a_i$, $i=1,\dots,n$,  bounded in their polydiscs of convergence, 
while any other local solution at $a$ is given by a power series unbounded in its polydisc of convergence,
cf. \cite{Dw} and \cite[Theorem A.4]{V5}.

\subsubsection{}

The KZ connection $\nabla^{\on{KZ}}_i$, $i=1,\dots,n$, over $\C$
is identified with the Gauss--Manin connection of the family
of hyperelliptic curves $y^2=(t-z_1)\dots (t-z_n)$. The monodromy representation of that
 Gauss--Manin connection is described in \cite[Appendix]{Ch}. The image of the monodromy representation is
 so big that the connection does not have proper invariant subbundles.
 Thus the existence of the invariant subbundle $\mc L\to \frak D^{(m),o}_{\on{KZ}}$
 is a $p$-adic feature.

\subsubsection{}
The invariant subbundles of the KZ connection over $\C$ usually are related to some additional conformal block
constructions, for example see \cite{FSV, SV2, V3}. Apparently our subbundle  $\mc L\to \frak D^{(m),o}_{\on{KZ}}$
 is of a
different $p$-adic nature.

\bigskip

\end{document}